\newcommand{\R}{\mathbb{R}}
\newcommand{\N}{\mathbb{N}}
\newcommand{\tr}{\operatorname{tr}}
\newcommand{\Dom}{\operatorname{Dom}}
\newcommand{\C}{\mathcal{C}}
\newcommand{\e}{\varepsilon}
\newtheorem{theorem}{Theorem}[section]
\newtheorem{lemma}[theorem]{Lemma}
\newtheorem{corollary}[theorem]{Corollary}
\newtheorem{proposition}[theorem]{Proposition}
\newtheorem{remark}[theorem]{Remark}
\newtheorem{definition}[theorem]{Definition}
\numberwithin{equation}{section}
\author[P. R. Stinga]{Pablo Ra\'ul Stinga}
\address{Departament of Mathematics\\
          The University of Texas at Austin\\
          1 University Station, C1200\\
          Austin, TX 78712-1202\\
          United States of America}
\email{stinga@math.utexas.edu}
\author[B. Volzone]{Bruno Volzone}
\address{Dipartimento di Ingegneria\\
          Universit\`a degli Studi di Napoli ``Parthenope''\\
          Napoli, 80143\\
          Italia}
\email{bruno.volzone@uniparthenope.it}
\keywords{fractional Neumann Laplacian, semilinear equation, Keller--Segel model,
heat semigroup, trace embedding}
\subjclass[2010]{Primary: 35R11, 35K08, 35J61, 35A01. Secondary: 35J60, 46E35}
\begin{document}

\title[Fractional Neumann semilinear problems]{Fractional semilinear Neumann problems \\
arising from a fractional Keller--Segel model}

\begin{abstract}
We consider the following fractional semilinear Neumann problem
on a smooth bounded domain $\Omega\subset\R^n$, $n\geq2$,
$$\begin{cases}
(-\e\Delta)^{1/2}u+u=u^{p},&\hbox{in}~\Omega,\\
\partial_\nu u=0,&\hbox{on}~\partial\Omega,\\
u>0,&\hbox{in}~\Omega,
\end{cases}$$
where $\e>0$ and $1<p<(n+1)/(n-1)$.
This is the fractional version of the semilinear Neumann problem studied by Lin--Ni--Takagi
in the late 80's. The problem arises by considering steady states of the Keller--Segel model with nonlocal
chemical concentration diffusion.
Using the semigroup language for the extension method
and variational techniques,
we prove existence of nonconstant smooth solutions
{for small $\e$}, which are obtained by minimizing a suitable energy functional.
In the case of large $\e$  we obtain nonexistence of {nonconstant} solutions.
It is also shown that as $\e\to0$ the solutions $u_\e$ tend to zero in measure on $\Omega$,
while they form spikes in $\overline{\Omega}$.  The regularity estimates
of the fractional Neumann Laplacian that we develop here are essential for the analysis.
The latter results are of independent interest.
\end{abstract}

\maketitle

\section{Introduction}

In the famous paper \cite{Lin-Ni-Takagi} C.-S. Lin, W.-M. Ni and I. Takagi
studied the existence of solutions to the one-parameter semilinear Neumann boundary value problem
\begin{equation}\label{LinTakagiprobl}
\begin{cases}
-\e\Delta u+u=g(u),&\hbox{in}~\Omega,\\
\partial_\nu u=0,&\hbox{on}~\partial\Omega.
\end{cases}
\end{equation}
Here $\Omega$ is {a smooth} bounded domain of $\R^{n}$, $n\geq1$, $\e$ is a positive parameter,
$\nu$ is the outer unit normal to $\partial\Omega$ and $g$ is a suitable nonnegative nonlinearity on $\R$
vanishing for $t\leq0$, {growing superlinearly at infinity} and such that, among other structural properties, $g(t)=O(t^{p})$ as $t\rightarrow+\infty$, where $p>1$ is below the \emph{critical} Sobolev exponent $(n+2)/(n-2)$ when $n\geq3$. In particular, the authors study the existence of positive weak solutions to \eqref{LinTakagiprobl} by
applying the Mountain Pass Lemma of A. Ambrosetti and P. Rabinowitz \cite{AMBRRAB} to the energy functional
\[
\mathcal{J}_{\e}(u)=\frac{1}{2}\int_{\Omega}\left(
\e|\nabla u|^{2}+u^2\right)dx-\int_{\Omega}G(u)\,dx,
\]
where $G$ is an antiderivative of $g$.
It is proved in \cite[Theorem~2]{Lin-Ni-Takagi} that if $\e$ is small enough, there exists a positive smooth solution $u_{\e}$ (a \emph{critical least energy solution}) for which
\[
\mathcal{J}_{\e}(u_\e)\leq C\e^{n/2},
\]
for a positive constant $C$ which does not depend on $\e$. This property allows
to prove that the family of solutions $\left\{u_{\e}\right\}_{\e>0}$ is uniformly bounded for small $\e$
and to obtain the convergence of such solutions to 0 in measure when $\e\rightarrow0$,
see \cite[Corollary 2.1]{Lin-Ni-Takagi}. Actually, for the nonlinearity $g(t)=t^p$ in \cite{Lin-Ni-Takagi} it is shown that the boundedness of $\left\{u_{\e}\right\}_{\e>0}$ holds for all $\e>0$ and that $u\equiv1$ is the only positive solution to problem \eqref{LinTakagiprobl} for large $\e$. Besides, in \cite[Proposition~4.1]{Lin-Ni-Takagi} the authors exhibit a striking property concerning the shape of the graphs of the solutions $\left\{u_{\e}\right\}_{\e>0}$ for small $\e$, which actually look like spikes in $\overline{\Omega}$. Such a property was the starting point of the research continued in the papers \cite{NiTakagi2,NiTakagi3} concerning mainly the localization of these spikes on the boundary of $\Omega$.

The aim of the present paper is to extend all the previous cited results to the problem
\begin{equation} \label{eq.8}
\begin{cases}
(-\e\Delta)^{1/2}u+u=g(u),&\hbox{in}~\Omega,\\
\partial_\nu u=0,&\hbox{on}~\partial\Omega,
\end{cases}
\end{equation}
where $\Omega$ is as before and $\e>0$.  The nonlinearity $g$ is defined as
\begin{equation}\label{nonlinearity}
g(t)=\begin{cases}
t^{p},&\hbox{if}~t\geq0,\\
0,&\hbox{if}~t\leq0.
\end{cases}
\end{equation}
for
\begin{equation}\label{eq.9}
1<p<\frac{n+1}{n-1}.
\end{equation}
Notice that $(n+1)/(n-1)$ is the critical Sobolev \emph{trace} exponent.
The operator $(-\e\Delta)^{1/2}$ is understood as the square root of the Laplacian
in the bounded domain $\Omega$ encoding the homogeneous Neumann boundary condition,
that is, the \emph{fractional Neumann Laplacian} which is defined as follows.
Let $\left\{\varphi_k\right\}_{k\in\N_0}$ and $\left\{\lambda_{k}\right\}_{k\in\N_0}$
be the eigenfunctions and eigenvalues of the Laplacian $-\Delta$ on $\Omega$
with homogeneous Neumann {boundary} condition. The ($\e$--)Neumann Laplacian is the operator
that acts on an $L^2$ function $u(x)=\sum_{k=0}^\infty u_k\varphi_k(x)$ as
$$-\e\Delta_Nu(x)=\sum_{k=0}^\infty (\e\lambda_k)u_k\varphi_k(x),$$
in a suitable sense. Then the fractional ($\e$--)Neumann Laplacian is given by
$$(-\e\Delta_N)^{1/2}u(x)=\sum_{k=0}^\infty(\e\lambda_k)^{1/2}u_k\varphi_k(x).$$
For more details see Sections \ref{Section1} and \ref{Section:lineal}.

Using the language of semigroups
as introduced in \cite{Stinga}, see also \cite{Stinga-Torrea}, one can check that $(-\e\Delta_N)^{1/2}$
is indeed a nonlocal operator. In fact,
$$(-\e\Delta_N)^{1/2}u(x)=\frac{1}{\sqrt{\pi}}\int_0^\infty\big(e^{t\e\Delta_N}u(x)
-u(x)\big)\,\frac{dt}{t^{3/2}},\quad x\in\Omega,$$
where $e^{t\Delta_N}u(x)$ is the heat diffusion semigroup generated by the Neumann Laplacian acting on $u$.
Then, by following the ideas of \cite{Stinga,Stinga-Torrea},
one can conclude that for a smooth function $u$ we have the pointwise integro-differential formula
$$(-\e\Delta_N)^{1/2}u(x)=c_{n,\Omega}
\operatorname{P.V.}\int_\Omega\big(u(x)-u(z))K(x,z)\,dz,\quad x\in\Omega,$$
where $c_{n,\Omega}$ is a positive constant and the kernel $K(x,z)$, given in terms of the heat kernel
for the Neumann Laplacian, satisfies the estimate $K(x,z)\sim\e^{1/2}|x-z|^{-(n+1)}$, for $x,z\in\Omega$.
Moreover, the fundamental solution of $(-\e\Delta_N)^{1/2}$ behaves like $\e^{1/2}|x-z|^{-(n-1)}$. We will
not go further into these details here.

Looking at problem \eqref{eq.8} and taking into account the nonlocality of the fractional Neumann Laplacian, a considerable issue that arises is to provide a suitable definition of weak solution.
The answer relies in understanding $(-\e\Delta_N)^{1/2}u$ as the normal derivative on $\Omega$
of a proper harmonic extension of $u$ to the cylinder ${\mathcal{C}:=}\Omega\times(0,\infty)$.
Let us first explain this idea in the classical case.
It is known that if $v(x,y):\R^n\times(0,\infty)\to\R$ is the harmonic extension of a smooth function $u:\R^n\to\R$
to the upper half space, namely, if $v$ is the solution to
$$\begin{cases}
\Delta_xv+v_{yy}=0,&\hbox{in}~\mathbb{R}^n\times(0,\infty),\\
v(x,0)=u(x),&\hbox{on}~\mathbb{R}^n,
\end{cases}$$
then
$$-v_y(x,0)=(-\Delta)^{1/2}u(x),\quad x\in\mathbb{R}^n,$$
where $(-\Delta)^{1/2}$ is the square root of the Laplacian on $\mathbb{R}^n$.
Such an identity can be checked by using the Fourier transform on the variable $x$.
We can understand this last property by using the language of the semigroups as explained in \cite{Stinga, Stinga-Torrea}.
Indeed, the solution $v$ above is the \textit{Poisson semigroup} generated by the Laplacian on $\R^n$,
which can be written in terms of the classical heat semigroup $e^{t\Delta}$
 via the so called \textit{Bochner subordination formula}:
$$v(x,y)\equiv e^{-y(-\Delta)^{1/2}}u(x)=\frac{y}{2\sqrt{\pi}}\int_0^\infty e^{-y^2/(4t)}e^{t\Delta}u(x)\,\frac{dt}{t^{3/2}}.$$
Then,
$$-v_y(x,y)=(-\Delta)^{1/2}e^{-y(-\Delta)^{1/2}}u(x),~y>0,\quad\hbox{and}\quad-v_y(x,y)\big|_{y=0}=(-\Delta)^{1/2}u(x).$$
The advantage now is that nothing prevents us to replace $-\Delta$ in the formulas
for the semigroup above by a positive operator $L$ (defined together with its functional
domain in order to include boundary conditions), so the same characterization
for $L^{1/2}$ as a Dirichlet-to-Neumann operator holds, see
\cite{Stinga, Stinga-Torrea}, or \cite{Gale-Miana-Stinga} for more general operators.
Moreover, this semigroup language avoids the use of the Fourier transform, which
is not available for a general differential operator $L$.
 In our particular case,
the result for the $\e$--fractional Neumann Laplacian reads as follows.
If $v(x,y)$ is the solution to
$$\begin{cases}
\e\Delta_xv+v_{yy}=0,&\hbox{in}~\mathcal{C}:=\Omega\times(0,\infty),\\
\partial_{\nu} v=0,&\hbox{on}~\partial_L\mathcal{C}:=\partial\Omega\times[0,\infty),\\
v(x,0)=u(x),&\hbox{on}~\Omega,
\end{cases}$$
then
$$-v_y(x,0)=(-\e\Delta_N)^{1/2}u(x),\quad x\in\Omega.$$
See Section \ref{Section1} for more details.

Going back to our problem \eqref{eq.8},
we can define a weak solution $u$ as the trace over $\Omega$
of a weak solution $v$
 to the problem with semilinear Neumann boundary condition
\begin{equation}\label{semilprobintrod}
\begin{cases}
\e\Delta_xv+v_{yy}=0,&\hbox{in}~\mathcal{C},\\
\partial_{\nu} v=0,&\hbox{on}~\partial_{L}\mathcal{C},\\
-v_y(x,0)+v(x,0)=g(v(x,0)),&\hbox{on}~\Omega.
\end{cases}
\end{equation}
that is
$$u(x)=v(x,0),\quad x\in\Omega.$$
The energy functional related to \eqref{semilprobintrod} is
\begin{equation}\label{4 star}
\mathcal{I}_\e(v)=\frac{1}{2}\iint_{\mathcal{C}}\big(\e|\nabla_xv|^2+v_{y}^2\big)\,dx\,dy
{+\frac{1}{2}\int_\Omega v(x,0)^2\,dx}-
\int_\Omega G(v(x,0))\,dx,
\end{equation}
where $G$ is an antiderivative of the nonlinearity $g$ given in \eqref{nonlinearity}. {Here we can}
 apply the techniques of the Calculus of Variations
to prove existence of critical points of $\mathcal{I}_\e$, yielding
{nonconstant} regular solutions $u$ to \eqref{eq.8}.

The present paper arises from a concrete application
that can also be seen as a nonlocal version of the model where the investigation in \cite{Lin-Ni-Takagi} took its origin.
Problem \eqref{eq.8}
appears when considering the steady states of the Keller--Segel system when
the diffusion of the concentration of the chemical is nonlocal.
Indeed, while \cite{Lin-Ni-Takagi} is involved in the study of stationary solutions to the classical Keller--Segel chemotaxis model system posed in $\Omega$, we look the
solutions to \eqref{eq.8} as steady states to the following local-nonlocal system
\begin{equation}
\begin{cases}
\rho_t-D_1\Delta \rho+\chi\nabla\cdot(\rho\nabla \log c)=0,&\hbox{in}~\Omega\times(0,\infty),\\
c_t+D_2(-\Delta)^{1/2} c+ac-b\rho=0,&\hbox{in}~\Omega\times(0,\infty),\\
\partial_\nu \rho= \partial_\nu c=0,&\hbox{on}~\partial\Omega\times(0,\infty),
\end{cases}\label{nonlockelseg}
\end{equation}
for some positive constants $D_1,\,D_2,\,\chi,\,a,\,b$, with the initial conditions
\begin{equation}
\begin{cases}
\rho(x,0)=\rho_0(x),&\hbox{on}~\Omega,\\
c(x,0)=c_0(x),&\hbox{on}~\Omega.
\end{cases}\label{nonlockelseginitcond}
\end{equation}
Here $\rho(x,t)$ describes the density of a bacteria population (such as amoebae) and $c(x,t)$ is
 the density concentration of a certain chemical.
In the model \eqref{nonlockelseg}--\eqref{nonlockelseginitcond} the diffusion is assumed to be
local for the bacteria, while it is nonlocal for the chemical.
 Due to the conservation of mass property for the first equation in
 \eqref{nonlockelseg}, a steady state for the system is a couple of functions $u,v$ satisfying
\begin{equation}
\begin{cases}
D_1\Delta u-\chi\nabla\cdot(u\nabla \log v)=0,&\hbox{in}~\Omega,\\
D_2(-\Delta)^{1/2}v+av-bu=0,&\hbox{in}~\Omega,\\
\partial_\nu u = \partial_\nu v=0,&\hbox{on}~\partial\Omega,
\end{cases}\label{nonlockelstat}
\end{equation}
with the condition
\begin{equation}\label{eq:mean}
u_\Omega:=\frac{1}{|\Omega|}\int_{\Omega}u(x)\,dx
\end{equation}
equal to an assigned positive constant $\overline{u}$.
Of course $u=\overline{u}$,
$v=a^{-1}b\overline{u}$ is a solution to \eqref{nonlockelstat}. Thus it makes sense to look for
positive nonconstant solutions.
Arguing similarly to \cite{Lin-Ni-Takagi}, it is possible to write $u=\lambda v^{\chi/D_1}$
for some positive constant $\lambda$. Then the system \eqref{nonlockelstat} is equivalent
to find a solution to \eqref{eq.9}, where $\e=D_2/a$, $g(t)=t^{p}$ for $t\geq0$ and $p=\chi/D_1$ and $v_\Omega=\overline{v}$.

Since the appearance of the papers by L. Caffarelli, L. Silvestre and collaborators
\cite{Caffree, Caffarelli-Silvestre, Caffarelli-Salsa-Silvestre, Caffgeo,  Silvestre-thesis, Silv1, Silv2}
nonlocal PDEs with fractional Laplacians became a topic which is nowadays deserving a lot of attention,
{also} because of the various applications in several fields. As far as the Neumann
Laplacian is concerned, some problems were studied in \cite{Alves-Oliva, Imbert-Mellet, PellacciMontef}.
A fractional Keller--Segel
model was considered in \cite{Escudero}, though the author there proposes to model the concentration
of the chemical with the usual local diffusion while the bacteria satisfy a nonlocal diffusion in dimension one.
This is in contrast with our model.

Notice that we could also model the diffusion of the chemical with a fractional Neumann Laplacian
with power different than $1/2$. The extension problem for such an operator is available (see the
generalization of the Caffarelli--Sivestre result of \cite{Caffarelli-Silvestre} given in
\cite{Stinga, Stinga-Torrea}) so in principle part of
the analysis could be carried out. This generalization will appear elsewhere.
The nonlinearity \eqref{nonlinearity} which we use could be replaced by a more general nonlinearity $g(x,t)$
under suitable structural conditions as in \cite{Lin-Ni-Takagi} without affecting the statements of the
results or the techniques needed. We stick to \eqref{nonlinearity}
to keep a clean presentation of the ideas.

The paper is organized as follows. In Section \ref{Section1} we give the functional framework which is necessary for the analysis of \eqref{eq.8}.
In particular, by making use of the extension problem (Theorem \ref{extensth}) and the efficient semigroup
language approach, we identify the domain of the operator $(-\e\Delta_{N})^{1/2}$ (Theorem \ref{thm:trace and H Omega}) and exhibit a trace inequality (Lemma \ref{thm:traces}), valid for functions belonging to a suitable functional space on the cylinder $\mathcal{C}$. All this will serve to give sense to the definition of weak solution for the semilinear problem \eqref{semilprobintrod}. Section \ref{Section:lineal} is entirely devoted to discuss existence (Lemma \ref{existweaksollin}), Harnack estimates (Theorem \ref{Harnack}) and regularity (Theorem \ref{Thm:regularity}) of weak solutions to the linear problem
\begin{equation} \label{eq.3}
\begin{cases}
(-\e\Delta)^{1/2}u+u=f,&\hbox{in}~\Omega,\\
\partial_\nu u=0,&\hbox{on}~\partial\Omega.
\end{cases}
\end{equation}
{We point out that for the equation $(-\Delta_N)^{1/2}u=f$ parallel results can be
found in \cite{PellacciMontef}, while for the case of the fractional Dirichlet Laplacian
$(-\Delta_D)^{1/2}u=f$ one can see \cite{Cabre-Tan}.}
Section \ref{Section4} introduces the proper concept of weak solution for the semilinear problem \eqref{eq.8} and shows the existence of a positive {nonconstant} smooth solution $u_\e$ for small $\e$. {Its} extension $v_\e$ is obtained as a critical point of the energy {functional \eqref{4 star}} associated to problem \eqref{semilprobintrod} (Theorem \ref{TeoremaMountain}, Corollary \ref{Cor:u epsilon}). In Section \ref{Section5} we employ a Moser iteration argument to obtain, for small $\e>0$, an $L^{\infty}$ bound of the sequence $\left\{u_\e\right\}_{\e>0}$ (Theorem \ref{Moser}). From here an $L^{q}$ estimate of each $u_{\e}$ is derived (Lemma \ref{Lemma 4.1}). This information allows to describe the geometry of the functions $\{u_\e\}_{\e>0}$, that are shown to be spike solutions (Theorem \ref{Shape}).
In Theorem \ref{Unifboundtheo} we use the previous results and a blow-up argument to get
the boundedness of any solution $\left\{u_\e\right\}_{\e>0}$ independent of $\e$.
Finally, Theorem \ref{existconst} concludes the paper by showing the nonexistence of nonconstant positive solutions to \eqref{eq.8} for large $\e$.

Throughout the paper $C,C_0,c$ denote positive constants. Subscripts in the constants point out the dependence on a group of parameters. By $\Omega$ we denote a {smooth
bounded domain} of $\R^{n}$, $n\geq2$. For positive numbers $A,B$ the symbol $A\sim B$ means
that for positive constants $c,C$ we have $cA\leq B\leq CA$, and we call $c,C$ the equivalence constants.

\section{Extension problem and domain for $(-\e\Delta_N)^{1/2}$}\label{Section1}

We denote by $\langle\cdot,\cdot\rangle_{L^{2}(\Omega)}$, $\langle\cdot,\cdot\rangle_{H^{1}(\Omega)}$ the scalar products in $L^{2}(\Omega)$ and in the usual Sobolev space $H^{1}(\Omega)$, respectively, and
$\langle\cdot,\cdot\rangle$ will mean the pairing between a Hilbert space and its dual.
Consider the homogeneous Neumann eigenvalue problem for the Laplacian in $\Omega$:
\begin{equation} \label{eq.1}
\begin{cases}
 -\Delta\varphi=\lambda\varphi,&\hbox{in}~\Omega,\\
 \partial_\nu\varphi=0 ,&\hbox{on}~\partial\Omega,
\end{cases}
\end{equation}
for $\lambda\geq0$. It is well known (see for example \cite{Evans,Gilbarg-Trudinger})
 that there exists a sequence of nonnegative eigenvalues
$\{\lambda_{k}\}_{k\in\N_{0}}$ that corresponds to eigenfunctions $\{\varphi_{k}\}_{k\in\N_{0}}$ in $H^{1}(\Omega)$
which are weak solutions to \eqref{eq.1}. We have that $\lambda_{0}=0$, $\varphi_{0}=1/\sqrt{|\Omega|}$,
$\int_{\Omega}\varphi_{k}\, dx=0$, for all $k\geq1$ and each $\varphi_k$ belongs to $C^\infty(\overline{\Omega})$.
The eigenfunctions $\{\varphi_k\}_{k\in\N_0}$ form an orthonormal basis of $L^2(\Omega)$.
By using the $L^2$ normalization and the weak formulation of the equation we see that
$\|\varphi_k\|_{H^1(\Omega)}^2=1+\lambda_k$.
It is easy to check that $\{\varphi_{k}\}_{k\in\N_{0}}$ is also an orthogonal basis of $H^1(\Omega)$. Hence,
since  $\langle u,\varphi_k\rangle_{H^1(\Omega)}=(1+\lambda_k)\langle u,\varphi_k\rangle_{L^2(\Omega)}$, we find
\begin{equation}\label{eq.H1}
H^1(\Omega)=\Big\{u \in L^2(\Omega):\|u\|_{H^1(\Omega)}^2=
\sum_{k=0}^\infty(1+\lambda_k)|\langle u,\varphi_k\rangle_{L^2(\Omega)}|^2<\infty\Big\}.
\end{equation}

For a function $u\in H^1(\Omega)$, we define the (negative) Neumann Laplacian of $u$ as {an} element
$-\Delta_Nu$ in the
dual space $H^1(\Omega)'$ verifying
\begin{equation}\label{eq:estrella}
\langle-\Delta_Nu,v\rangle=\int_{\Omega}\nabla u\cdot\nabla v\,dx,\quad\hbox{for every}~v\in H^1(\Omega).
\end{equation}
In terms of the orthogonal basis $\{\varphi_k\}_{k\in\N_0}$ we can write
$$-\Delta_Nu=\sum_{k=1}^\infty\lambda_k\langle u,\varphi_k\rangle_{L^2(\Omega)}\varphi_k,
\quad\hbox{in}~H^1(\Omega)',$$
namely, for each $v\in H^1(\Omega)$ we have
\begin{equation}\label{eq.2.2}
\langle-\Delta_Nu,v\rangle=\sum_{k=1}^\infty\lambda_k\langle u,\varphi_k\rangle_{L^2(\Omega)}
\langle v,\varphi_k\rangle_{L^2(\Omega)}.
\end{equation}
Indeed, by the weak formulation of the eigenvalue problem,
${\langle -\Delta_Nu,\varphi_k\rangle}
=\lambda_k\langle u,\varphi_k\rangle_{L^2(\Omega)}$, for all $k\geq0$, and
the identity \eqref{eq.2.2} follows by linearity and density.
Notice that $-\Delta_N$ has a nontrivial kernel in $H^1(\Omega)$,
namely, the set of constant functions.

In order to define the fractional $\e$--Neumann Laplacian
$(-\e\Delta_N)^{1/2}$ for each $\e>0$, we consider the Hilbert space
$$\mathcal{H}_\e(\Omega)\equiv\Dom((-\e\Delta_N)^{1/2}):=
\Big\{u\in L^2(\Omega):\sum_{k=1}^\infty(\e\lambda_k)^{1/2}|\langle u,\varphi_k
\rangle_{L^2(\Omega)}|^2<\infty\Big\},$$
under the scalar product
\[
\langle u,v\rangle_{\mathcal{H}_{\e}(\Omega)}:=\langle u,v\rangle_{L^{2}(\Omega)}+\e^{1/2}\sum_{k=1}^{\infty}\lambda_{k}^{1/2} \langle
u,\varphi_k\rangle_{L^2(\Omega)}\langle
v,\varphi_k\rangle_{L^2(\Omega)},
\]
so that the norm in $\mathcal{H}_\e(\Omega)$ is given by
$$\|u\|^2_{\mathcal{H}_\e(\Omega)}=\|u\|_{L^2(\Omega)}^{2}+\e^{1/2}\sum_{k=1}^{\infty}\lambda_{k}^{1/2}|\langle
u,\varphi_k\rangle_{L^2(\Omega)}|^{2}.$$
Since $\lambda_k\nearrow\infty$, it is obvious that $C^\infty(\overline{\Omega})\subset H^1(\Omega)\subset\mathcal{H}_\e(\Omega)$.
For $u\in\mathcal{H}_\e(\Omega)$, we define $(-\e\Delta_N)^{1/2}u$ as {an} element in the dual space
$\mathcal{H}_\e(\Omega)'$ given by
$$(-\e\Delta_N)^{1/2}u=\sum_{k=1}^\infty(\e\lambda_k)^{1/2}\langle u,\varphi_k\rangle_{L^2(\Omega)}\varphi_k,
\quad\hbox{in}~\mathcal{H}_\e(\Omega)',$$
that is, for any function $v\in\mathcal{H}_\e(\Omega)$,
$$\langle(-\e\Delta_N)^{1/2}u,v\rangle=\sum_{k=1}^\infty(\e\lambda_k)^{1/2}\langle u,\varphi_k\rangle_{L^2(\Omega)}
\langle v,\varphi_k\rangle_{L^2(\Omega)}.$$
As before, the set of constant functions
is the nontrivial kernel {of
$(-\e\Delta_N)^{1/2}$ in $\mathcal{H}_\e(\Omega)$}.
The last identity can be rewritten, in a parallel way to \eqref{eq:estrella}, as
$$\langle(-\e\Delta_N)^{1/2}u,v\rangle=\int_\Omega(-\e\Delta_N)^{1/4}u(-\e\Delta_N)^{1/4}v\,dx,\quad\hbox{for
every}~v\in\mathcal{H}_\e(\Omega),$$
where $(-\e\Delta_{{N}})^{1/4}$ is defined by taking the power $1/4$ of the eigenvalues $\lambda_k$.

Before presenting the extension problem for $(-\e\Delta_N)^{1/2}$
we collect some well known facts about
the heat equation {on $\Omega$}
with Neumann boundary condition.
Let $u\in L^p(\Omega)$, $1\leq p\leq\infty$. Then the unique classical solution to the Neumann heat equation
\begin{equation*}
\begin{cases}
w_t=\Delta w,&\hbox{in}~{\Omega\times(0,\infty)},\\
\partial_\nu w=0,&\hbox{on}~\partial\Omega\times{[0,\infty)},\\
w(x,0)=u(x),&\hbox{on}~\Omega,
\end{cases}
\end{equation*}
is given by
$$w(x,t)\equiv e^{t\Delta_N}u(x)=\int_{\Omega}\mathcal{W}_t(x,z)u(z)\,dz,$$
where
$$\mathcal{W}_t(x,z)=\sum_{k=0}^\infty e^{-t\lambda_k}\varphi_k(x)\varphi_k(z),$$
is the (distributional) Neumann heat kernel. We have
\begin{equation}\label{integral heat kernel}
\int_{\Omega}\mathcal{W}_t(x,z)\,dz=1,\quad\hbox{for all}~x\in\Omega,~t>0.
\end{equation}
Moreover, for each $T>0$
there exist positive constants $c_1,c_2,c_3,c_4,c,C$, depending only on $\Omega$, $n$ and $T$, such that
\begin{equation}\label{Saloff-Coste}
c_1\frac{e^{-\frac{|x-z|^2}{c_2t}}}{t^{n/2}}\leq
\mathcal{W}_t(x,z)\leq c_3\frac{e^{-\frac{|x-z|^2}{c_4t}}}{t^{n/2}},\quad\hbox{for all}~x,z\in\Omega,~0<t<T,
\end{equation}
and
\begin{equation}\label{Wang-Yan}
 |\nabla_x\mathcal{W}_t(x,z)|\leq \frac{C}{t^{(n+1)/2}}e^{-c\frac{|x-z|^2}{t}},\quad\hbox{for all}~ x,z\in\Omega,~t>0.
\end{equation}
Moreover, there is a constant $M$ such that $|\mathcal{W}_{t}(x,z)|\leq M$ for all $x,z\in\Omega$
and $t>T$.
 For these properties see \cite{Gyrya-Saloff-Coste,
Saloff-Coste, Wang-Yan-Preprint, Wang-Yan-Proceedings, Wang}.

We particularize to $(-\e\Delta_N)^{1/2}$ the general extension problem proved in \cite{Stinga, Stinga-Torrea}.

\begin{theorem}[Extension problem for $(-\e\Delta_N)^{1/2}$]\label{extensth}
Let $u\in \mathcal{H}_\e(\Omega)$ such that $\displaystyle\int_\Omega u\,dx=0$. Define
\begin{equation}v(x,y)= e^{-y(-\e\Delta_N)^{1/2}}u(x):=\sum_{k=1}^{\infty}e^{-y(\e \lambda_k)^{1/2}}
\langle u,\varphi_k\rangle_{L^2(\Omega)}\varphi_{k}(x).\label{trueextens}\end{equation}
Then $v\in H^1(\mathcal{C})$ is the unique weak solution to the extension problem
\begin{equation}\label{eq.4}
\begin{cases}
 \e\Delta_{x} v+v_{yy}=0,&\hbox{in}~\mathcal{C},\\
 \partial_{\nu}v=0,&\hbox{on}~\partial_L\mathcal{C},\\
 v(x,0)=u(x),&\hbox{on}~\Omega,
\end{cases}
\end{equation}
where $\nu$ is the outward normal
to the lateral boundary $\partial_L\mathcal{C}$ of $\mathcal{C}$,
such that $\displaystyle\int_\Omega v(x,y)\,dx=0$, for all $y\geq0$.
More precisely,
$$\iint_{\mathcal{C}}\big(\e\nabla_{x}v\cdot\nabla_{x}\psi+ v_y\psi_y\big) \,dx\,dy=0,$$
for all test functions $\psi\in H^1(\mathcal{C})$ with zero trace over $\Omega$, $\tr_\Omega\psi=0$,
and $\lim_{y\to0^+}v(x,y)=u(x)$ in $L^2(\Omega)$. Furthermore, the function $v$ is the unique minimizer of the energy functional
\begin{equation}\mathcal{F}(v)=\frac{1}{2}\iint_{\mathcal{C}}\big(\e|\nabla_{x}v|^{2}+|v_{y}|^{2}\big)\,dx\,dy,\label{energufunct}\end{equation}
over the set  $\mathcal{U}=\left\{v\in H^{1}(\mathcal{C}):\,\tr_{\Omega}v=u\right\}$. We can also write
\begin{equation}\label{eq.5}
v(x,y)=\frac{\e^{1/2}y}{2\sqrt{\pi}}\int_{0}^{\infty}e^{-\e y^2/(4t)}e^{t\Delta_N}u(x)\,\frac{dt}{t^{3/2}}
=\int_{\Omega}\mathcal{P}_{\e^{1/2}y}(x,z)u(z)\,dz,
\end{equation}
where, for any $y>0$,
\begin{equation}\label{Poisson kernel}
\mathcal{P}_y(x,z)=\sum_{k=0}^\infty e^{-y\lambda_k^{1/2}}\varphi_k(x)\varphi_k(z)
=\frac{y}{2\sqrt{\pi}}\int_{0}^{\infty}e^{-y^2/(4t)}\mathcal{W}_t(x,z)\,\frac{dt}{t^{3/2}},
\end{equation}
is the Neumann--Poisson kernel. An equivalent formula for $v$ is
$$v(\cdot,y)=\frac{1}{\sqrt{\pi}}\int_0^\infty e^{-\e y^2/(4t)}e^{t\Delta_N}((-\Delta_N)^{1/2}u)\,\frac{dt}{t^{1/2}},
\quad\hbox{in}~\mathcal{H}_\e(\Omega)'.$$
Moreover,
\begin{equation}\label{eq.36}
-\lim_{y\to0^+}v_y=(-\e\Delta_N)^{1/2}u,\quad\hbox{in}~\mathcal{H}_\e(\Omega)'.\end{equation}
\end{theorem}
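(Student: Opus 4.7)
The plan is to exploit the explicit spectral form of $v$ in \eqref{trueextens} and reduce every claim (belonging to $H^1(\mathcal{C})$, weak formulation, minimization, Poisson representation, trace behavior) to scalar identities for the building blocks $V_k(x,y)=e^{-y(\e\lambda_k)^{1/2}}\varphi_k(x)$. Since $-\e\Delta_x V_k=\e\lambda_k V_k$ and $\partial_{yy}V_k=\e\lambda_k V_k$, each $V_k$ is a classical solution of the PDE in $\mathcal{C}$ satisfying $\partial_\nu V_k=0$ on $\partial_L\mathcal{C}$ because the $\varphi_k$ do.

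First, I would check that the series in \eqref{trueextens} converges in $H^1(\mathcal{C})$. Using $\langle u,\varphi_0\rangle_{L^2(\Omega)}=|\Omega|^{-1/2}\int_\Omega u\,dx=0$, orthogonality of $\{\varphi_k\}$ in $L^2(\Omega)$, the identity $\int_\Omega \nabla\varphi_k\cdot\nabla\varphi_j\,dx=\lambda_k\delta_{kj}$ and the elementary computation $\int_0^\infty e^{-2y(\e\lambda_k)^{1/2}}\,dy=\frac{1}{2(\e\lambda_k)^{1/2}}$, a term-by-term calculation gives
\[
\iint_{\mathcal{C}}\bigl(\e|\nabla_{x}v|^{2}+|v_{y}|^{2}\bigr)\,dx\,dy
=\e^{1/2}\sum_{k=1}^\infty \lambda_k^{1/2}\,|\langle u,\varphi_k\rangle_{L^2(\Omega)}|^{2},
\]
which is finite by the definition of $\mathcal{H}_\e(\Omega)$. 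Together with a parallel computation for $\iint |v|^2$ (using $\int_0^\infty e^{-2y(\e\lambda_k)^{1/2}}\,dy$ again) this shows $v\in H^1(\mathcal{C})$ and also $\int_\Omega v(\cdot,y)\,dx=0$ for every $y\ge0$ because the $k=0$ mode is absent. The trace $\lim_{y\to 0^+}v(\cdot,y)=u$ in $L^2(\Omega)$ follows from dominated convergence on $\ell^{2}$ applied to the coefficients $e^{-y(\e\lambda_k)^{1/2}}\langle u,\varphi_k\rangle_{L^2(\Omega)}$.

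Next I would verify the weak formulation. For a test $\psi\in H^1(\mathcal{C})$ with $\tr_\Omega\psi=0$, writing the Fourier-type expansion $\psi(x,y)=\sum_{k=0}^\infty \psi_k(y)\varphi_k(x)$ (with convergence in $L^2(\mathcal{C})$ and weak derivatives) and using the eigenfunction equation, the integrand $\e\nabla_x v\cdot\nabla_x\psi+v_y\psi_y$ decouples mode by mode and reduces to
\[
\sum_{k=1}^\infty\langle u,\varphi_k\rangle_{L^2(\Omega)}\int_0^\infty\bigl[\e\lambda_k e^{-y(\e\lambda_k)^{1/2}}\psi_k(y)-(\e\lambda_k)^{1/2}e^{-y(\e\lambda_k)^{1/2}}\psi_k'(y)\bigr]\,dy,
\]
and each $y$-integral vanishes by integration by parts using $\psi_k(0)=0$ (which is the spectral form of $\tr_\Omega\psi=0$). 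This also yields \eqref{eq.36}: taking $\psi$ with nonzero trace $\phi\in\mathcal{H}_\e(\Omega)$ and integrating by parts in $y$ on each mode produces the boundary term $\sum_k(\e\lambda_k)^{1/2}\langle u,\varphi_k\rangle_{L^2(\Omega)}\phi_k(0)=\langle(-\e\Delta_N)^{1/2}u,\phi\rangle$. Uniqueness among weak solutions, and uniqueness of the minimizer of $\mathcal{F}$ over $\mathcal{U}$, then follow from the strict convexity of $\mathcal{F}$ and the fact that the difference $w$ of two solutions has $\tr_\Omega w=0$ and is admissible as test function, giving $\iint_{\mathcal{C}}(\e|\nabla_x w|^2+w_y^2)\,dx\,dy=0$, hence $w=0$.

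Finally, I would derive the Bochner subordination representation \eqref{eq.5}. The scalar identity
\[
e^{-y\sqrt{\mu}}=\frac{y}{2\sqrt{\pi}}\int_0^\infty e^{-y^2/(4t)}e^{-t\mu}\,\frac{dt}{t^{3/2}},\qquad \mu,y>0,
\]
applied to $\mu=\e\lambda_k$ with the change of variables $t\mapsto t/\e$ turns the series \eqref{trueextens} into $\frac{\e^{1/2}y}{2\sqrt\pi}\int_0^\infty e^{-\e y^2/(4t)}e^{t\Delta_N}u(x)\,\frac{dt}{t^{3/2}}$, and interchanging sum and integral (justified by the exponential decay of the heat kernel estimates \eqref{Saloff-Coste} and absolute convergence) recovers the Poisson kernel formula via \eqref{Poisson kernel}. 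The alternative formula for $v$ in $\mathcal{H}_\e(\Omega)'$ is obtained by the same subordination argument applied to the spectral multipliers $(\e\lambda_k)^{1/2}e^{-y(\e\lambda_k)^{1/2}}$, using $\sqrt{\mu}\,e^{-y\sqrt\mu}=\pi^{-1/2}\int_0^\infty e^{-y^2/(4t)}e^{-t\mu}\sqrt\mu\,t^{-1/2}\,dt$.

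The main obstacle is the careful bookkeeping of convergence: justifying term-by-term differentiation of \eqref{trueextens} in $H^1(\mathcal{C})$, interchanging the sum in the series with the Fourier-in-$y$ decomposition of arbitrary test functions $\psi\in H^1(\mathcal{C})$ (rather than of smooth ones), and passing to the limit $y\to 0^+$ in the dual space $\mathcal{H}_\e(\Omega)'$ to obtain \eqref{eq.36}. The remainder is bookkeeping through the spectral calculus.
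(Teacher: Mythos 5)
Your proposal follows essentially the same spectral, mode-by-mode approach as the paper: compute the $H^1(\mathcal{C})$ norm via Parseval and $\int_0^\infty e^{-2y(\e\lambda_k)^{1/2}}\,dy$, verify the weak formulation by expanding the test function in the Neumann eigenbasis and integrating by parts in $y$, prove uniqueness by testing the difference against itself, and obtain the subordination formulas from the scalar Bochner identity. The one thing to fix is the last displayed identity: the left-hand side should be $e^{-y\sqrt{\mu}}$, not $\sqrt{\mu}\,e^{-y\sqrt{\mu}}$, since
\[
\pi^{-1/2}\int_0^\infty e^{-y^2/(4t)}e^{-t\mu}\,\sqrt{\mu}\,t^{-1/2}\,dt
=\sqrt{\mu}\cdot\mu^{-1/2}e^{-y\sqrt{\mu}}=e^{-y\sqrt{\mu}},
\]
and it is exactly the factor $\lambda_k^{1/2}$ from $(-\Delta_N)^{1/2}u$ that cancels the $\lambda_k^{-1/2}$ produced by the $t^{-1/2}$ subordination kernel; the paper simply delegates these last formulas (and the limit \eqref{eq.36}) to \cite[Theorem~1.1]{Stinga-Torrea}, whereas you spell them out.
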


\begin{proof}
From \cite{Stinga,Stinga-Torrea}
we know that $v\in C^\infty((0,\infty);H^1(\Omega))\cap C([0,\infty);L^2(\Omega))$. Observe that
$$\iint_{\mathcal{C}}v^2\,dx\,dy=\sum_{k=1}^{\infty}\int_0^\infty e^{-2y(\e \lambda_k)^{1/2}}
|\langle u,\varphi_k\rangle_{L^2(\Omega)}|^2\,dy\leq C\|u\|_{L^2(\Omega)}^2,$$
and
\begin{equation}\label{eq.32}
\begin{aligned}
\iint_{\mathcal{C}}\left(\e|\nabla_xv|^2+v_y^2\right)\,dx\,dy&= 2\sum_{k=1}^{\infty}\e\lambda_k|\langle u,\varphi_k\rangle_{L^2(\Omega)}|^2\int_0^\infty e^{-2y(\e \lambda_k)^{1/2}}dy\\
&=\sum_{k=1}^\infty(\e\lambda_k)^{1/2}|\langle u,\varphi_k\rangle_{L^2(\Omega)}|^2
=\|(-\e\Delta_{{N}})^{1/4}u\|^{2}_{L^{2}(\Omega)}.
\end{aligned}
\end{equation}
Therefore $v\in H^1(\mathcal{C})$.
Let $\psi\in H^1(\mathcal{C})$ such that $\tr_\Omega\psi=0$. For almost every $y>0$ we can write
$$\psi(x,y)=\sum_{k=0}^\infty\langle
\psi(\cdot,y),\varphi\rangle_{L^2(\Omega)}\varphi_k(x)=:\sum_{k=0}^\infty\psi_k(y)\varphi_k(x).$$
Then, by using \eqref{eq.2.2} and the definition of $v$, for almost every $y>0$ we have
$$\int_\Omega v_{yy}\psi\,dx=\sum_{k=1}^\infty \e\lambda_ke^{-y(\e\lambda_k)^{1/2}}
\langle u,\varphi_k\rangle_{L^2(\Omega)}\psi_{k}(y)=\e\int_\Omega\nabla_xv\cdot\nabla_x\psi\,dx.$$
Integrating this identity in $y$ and applying integration by parts,
$$ \iint_{\mathcal{C}}\big(\e\nabla_xv\cdot\nabla_x\psi+v_y\psi_y\big)\,dx\,dy =
-\lim_{y\to0^+}\int_\Omega v_y(x,y)\psi(x,y)\,dx=0.$$
Hence $v$ in the statement is a weak solution to \eqref{eq.4}.

Uniqueness can be proved in two ways. One is by writing $v(x,y)=\sum_{k=0}^\infty v_k(y)\varphi_k(x)$
and showing that each $v_k(y)$ is the unique solution to an ordinary differential equation, see \cite{Stinga, Stinga-Torrea}.
Another way of proving uniqueness is by using the weak formulation for the difference $V$
of two solutions to \eqref{eq.6}. Indeed, using $V$ itself as a test function in the weak formulation, we get that
$\nabla_{x,y}V=0$. But then, since $V\in H^1(\mathcal{C})$,
we must have $V\equiv 0$ on $\mathcal{C}$.

Concerning the minimization property of $v$, let us take any other function $w\in\mathcal{U}$ and use
 $v-w$ as a test function in the weak formulation of \eqref{eq.4}. Then we get
\[
\mathcal{F}(v)=\frac{1}{2}\iint_{\mathcal{C}}\left(\e\nabla_{x}v\cdot\nabla_{x}w+v_{y}w_{y}\right)\,dx\,dy.
\]
By using the Cauchy--Schwarz and Young inequalities we have $\mathcal{F}(v)\leq\mathcal{F}(w)$,
and the uniqueness follows from the strict convexity of $\mathcal{F}$. The rest of the formulas in the statement,
as well as \eqref{eq.36}, follow from \cite[Theorem~1.1]{Stinga-Torrea}.
\end{proof}
\begin{definition}
{For any $u\in \mathcal{H}_\e(\Omega)$ such that
$\displaystyle\int_\Omega u\,dx=0$, we will call the solution $v$ to problem \eqref{eq.4}
the $\e$--Neumann harmonic extension of $u$ and we write $v=E^\e(u)$.}
\end{definition}

\begin{remark}[Extensions of functions with nonzero average]\label{remarknonzeroav}
\rm{Observe that if $\displaystyle\int_\Omega u\,dx\neq 0$ then the function
\begin{equation}\label{extnonzeraver}
v(x,y)=\sum_{k=0}^{\infty}e^{-y(\e \lambda_k)^{1/2}}
\langle u,\varphi_k\rangle_{L^2(\Omega)}\varphi_{k}(x).
\end{equation}
is not in $L^2(\mathcal{C})$ in general but only its gradient $\nabla v\in L^2(\mathcal{C})$.
In order to give a suitable definition of the $\e$--Neumann
harmonic extension of $u$, we first solve the extension problem \eqref{eq.4}
with initial data $\tilde{u}=u-u_\Omega$, where $u_\Omega$ denotes the integral average of $u$ over $\Omega$,
see \eqref{eq:mean}, in order to find a function $\tilde{v}=E^{\e}(\tilde{u})$. {Then} we define
$$v{\equiv}E^{\e}(u):=E^{\e}(\tilde{u})+u_\Omega,$$
which clearly coincides with \eqref{extnonzeraver}.\\
Using the fact that the fractional {$\e$--}Neumann Laplacian
does not see constants, we have $$(-\e\Delta_N)^{1/2}u=(-\e\Delta_N)^{1/2}\tilde{u}=
{-\lim_{y\to0^+}\tilde{v}_y=
-\lim_{y\to0^+}v_y,\quad\hbox{in}~\mathcal{H}_\e(\Omega)'.}$$}
\end{remark}

Notice that the extension result {gives} in particular that
\begin{equation}
u\in\mathcal{H}_\e(\Omega),~\int_\Omega u\,dx=0, \quad
\hbox{implies}\quad u=\tr_{\Omega}v,~\hbox{for some}~v\in H^{1}(\mathcal{C}),\int_\Omega v(x,y)\,dx=0,~y\geq0.\label{eq.30}
\end{equation}
Now two questions come into our attention:
\begin{enumerate}[(i)]
\item Is the opposite implication in \eqref{eq.30} true?
\item Is there any integral characterization of the space $\mathcal{H}_\e(\Omega)$?
\end{enumerate}
The answer to question (i) is affirmative, and it will follow by an application of the trace inequality
of Lemma \ref{thm:traces}.
The answer to the second question is the identity $\mathcal{H}_\e(\Omega)=H^{1/2}(\Omega)$,
see Theorem \ref{thm:trace and H Omega}.

\subsection{Characterization of $\mathcal{H}_\e(\Omega)=\Dom((-\e\Delta_N)^{1/2})$}

Consider next the following fractional Sobolev space on the basis $\Omega\times\left\{0\right\}$ of $\mathcal{C}$:
\[
H^{1/2}(\Omega)=\left\{u\in L^{2}(\Omega):\|u\|^2_{H^{1/2}(\Omega)}:=
\|u\|_{L^2(\Omega)}^{2}+[u]^2_{H^{1/2}(\Omega)}<\infty\right\},
\]
where
\[
[u]^2_{H^{1/2}(\Omega)}:=\int_{\Omega}\int_{\Omega}\frac{|u(x)-u(y)|^{2}}{|x-y|^{n+1}}\,dx\,dy.
\]
We denote by $H^{-1/2}(\Omega)$ the dual space of the Hilbert space $H^{1/2}(\Omega)$.

Using again the semigroup language we can prove the following result.

\begin{theorem}[Domain of $(-\e\Delta_N)^{1/2}$]\label{thm:trace and H Omega}
For each $\e>0$ we have
$$\mathcal{H}_\e(\Omega)=H^{1/2}(\Omega),$$
as Hilbert spaces. In particular,
$$\|(-\e\Delta_N)^{1/4}u\|_{L^2(\Omega)}^2=\sum_{k=1}^\infty(\e\lambda_k)^{1/2}|\langle u,\varphi_k\rangle_{L^2(\Omega)}|^2
\sim \e^{1/2}[u]_{H^{1/2}(\Omega)}^2,$$
with equivalence constants depending only on $\Omega$ and $n$.
\end{theorem}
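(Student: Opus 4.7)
The plan is to compute $\|(-\e\Delta_N)^{1/4}u\|_{L^2(\Omega)}^2$ via the heat semigroup using the subordination identity
\[
\mu^{1/2}=\frac{1}{2\sqrt{\pi}}\int_0^\infty\bigl(1-e^{-t\mu}\bigr)\,\frac{dt}{t^{3/2}},\qquad \mu\geq 0,
\]
and then identify the resulting expression with the Gagliardo seminorm $[u]_{H^{1/2}(\Omega)}$ through pointwise two-sided bounds on the integrated Neumann heat kernel. The scaling $(-\e\Delta_N)^{1/4}=\e^{1/4}(-\Delta_N)^{1/4}$ on the spectral side reduces everything to the case $\e=1$, so I will focus on proving
\[
\|(-\Delta_N)^{1/4}u\|_{L^2(\Omega)}^2\sim[u]_{H^{1/2}(\Omega)}^2
\]
with constants depending only on $\Omega$ and $n$, and then insert the factor $\e^{1/2}$ at the end.

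First I apply the subordination identity to each eigenvalue $\lambda_k$ and use Parseval in the basis $\{\varphi_k\}_{k\in\N_0}$, writing $u_k=\langle u,\varphi_k\rangle_{L^2(\Omega)}$:
\[
\|(-\Delta_N)^{1/4}u\|_{L^2(\Omega)}^2=\sum_{k=1}^\infty\lambda_k^{1/2}|u_k|^2
=\frac{1}{2\sqrt{\pi}}\int_0^\infty\Bigl(\|u\|_{L^2(\Omega)}^2-\langle e^{t\Delta_N}u,u\rangle_{L^2(\Omega)}\Bigr)\,\frac{dt}{t^{3/2}}.
\]
Next, using the symmetry of the Neumann heat kernel $\mathcal{W}_t(x,z)$ together with the stochastic completeness \eqref{integral heat kernel}, the standard algebraic manipulation yields
\[
\|u\|_{L^2(\Omega)}^2-\langle e^{t\Delta_N}u,u\rangle_{L^2(\Omega)}=\frac{1}{2}\int_\Omega\!\int_\Omega\mathcal{W}_t(x,z)|u(x)-u(z)|^2\,dx\,dz,
\]
so by Fubini
\[
\|(-\Delta_N)^{1/4}u\|_{L^2(\Omega)}^2=\frac{1}{4\sqrt{\pi}}\int_\Omega\!\int_\Omega K(x,z)|u(x)-u(z)|^2\,dx\,dz,
\qquad K(x,z):=\int_0^\infty\mathcal{W}_t(x,z)\,\frac{dt}{t^{3/2}}.
\]

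The heart of the argument is to show $K(x,z)\sim|x-z|^{-(n+1)}$ uniformly on $\Omega\times\Omega$. I split $K$ at some fixed $T>0$. For the piece $0<t<T$, I substitute the Gaussian bounds \eqref{Saloff-Coste} and change variables $s=|x-z|^2/(ct)$; the resulting integral is $|x-z|^{-(n+1)}\int_{|x-z|^2/(cT)}^\infty s^{(n-1)/2}e^{-s}\,ds$, which is bounded above by $\Gamma((n+1)/2)$ and bounded below by a positive constant since $|x-z|\leq\operatorname{diam}(\Omega)$ keeps the lower limit of integration bounded. For the tail $t>T$, I use the uniform bound $\mathcal{W}_t(x,z)\leq M$ to get a contribution $\leq 2M/\sqrt{T}$, which, since $|x-z|^{-(n+1)}\geq\operatorname{diam}(\Omega)^{-(n+1)}$, is absorbed into the upper bound; for the lower bound it only improves matters. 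Combining both pieces gives the two-sided comparison $K(x,z)\sim|x-z|^{-(n+1)}$ with constants depending only on $\Omega$ and $n$.

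Inserting this equivalence into the double integral yields $\|(-\Delta_N)^{1/4}u\|_{L^2(\Omega)}^2\sim[u]_{H^{1/2}(\Omega)}^2$, and combined with the trivial $\|u\|_{L^2(\Omega)}^2=\|u\|_{L^2(\Omega)}^2$ the norms of $\mathcal{H}_1(\Omega)$ and $H^{1/2}(\Omega)$ are equivalent, so the two Hilbert spaces coincide with comparable norms. Rescaling the spectral side by $\e^{1/2}$ gives the statement for general $\e>0$. The main technical obstacle is the lower bound on $K(x,z)$: one must verify that the small-$t$ Gaussian lower bound in \eqref{Saloff-Coste}, which only holds on a bounded time interval $(0,T)$, still produces the full singular behavior $|x-z|^{-(n+1)}$; this is what forces the use of the diameter of $\Omega$ to keep the integration limit $|x-z|^2/(c_2 T)$ bounded, so that the remaining Gamma-type integral is bounded away from zero.
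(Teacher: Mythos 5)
Your proof is correct and establishes the same equivalence, but by a genuinely different representation of the fractional power. The paper works with the Poisson semigroup $e^{-y(-\e\Delta_N)^{1/2}}$ and the difference quotient $I_y(u)=y^{-1}\big(\|u\|^2_{L^2}-\langle u,e^{-y(-\e\Delta_N)^{1/2}}u\rangle\big)$, proving monotone convergence of $I_y(u)$ as $y\to0^+$ to $\|(-\e\Delta_N)^{1/4}u\|_{L^2}^2$, rewriting $yI_y(u)$ as a double integral against the Poisson kernel, and only then invoking the Gaussian heat kernel bounds \eqref{Saloff-Coste} through the two-sided Poisson kernel estimate \eqref{eq.17}. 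You instead go directly through Bochner subordination of the heat semigroup, $\lambda_k^{1/2}=\frac{1}{2\sqrt{\pi}}\int_0^\infty(1-e^{-t\lambda_k})\,t^{-3/2}\,dt$, which converts the spectral sum (via Tonelli, since all terms are nonnegative, hence everything holds as an identity in $[0,\infty]$ and gives the domain equality automatically) into a single double integral against $K(x,z)=\int_0^\infty\mathcal{W}_t(x,z)\,t^{-3/2}\,dt$; you then estimate $K$ from \eqref{Saloff-Coste} in one pass, splitting at a finite time $T$ and noting that the large-time contribution is a bounded constant absorbed by $|x-z|^{-(n+1)}\geq\operatorname{diam}(\Omega)^{-(n+1)}$. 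The ingredients are the same (symmetry and stochastic completeness \eqref{integral heat kernel} of the Neumann heat kernel, Gaussian bounds, bounded domain), but your route avoids the intermediate Poisson kernel estimate and the limit argument in $y$, trading them for a Fubini/Tonelli interchange. Your observation that the scaling $(-\e\Delta_N)^{1/4}=\e^{1/4}(-\Delta_N)^{1/4}$ lets one set $\e=1$ is also a small simplification over the paper, which carries $\e$ throughout. Both are fine; your argument is perhaps a touch more economical, while the paper's has the advantage that $I_y(u)$ is finite for every $u\in L^2(\Omega)$ at each fixed $y>0$, making the characterization of $\mathcal{H}_\e(\Omega)$ very transparent.
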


\begin{proof}
The proof is divided into three steps.

\smallskip

\noindent {\sc Step 1.} We claim that $u\in \mathcal{H}_\e(\Omega)$ if and only if $u\in L^2(\Omega)$ and the function
\[
I_{y}(u)=\frac{1}{y}\left[\langle u,u\rangle_{L^2(\Omega)}-\langle u,e^{-y(-\e\Delta_{N})^{1/2}}u
\rangle_{L^2(\Omega)}\right]
\]
is uniformly bounded in $y>0$; in such a case,
\[
\lim_{y\rightarrow0^+}I_{y}(u)=\sup_{y>0} I_{y}(u)=\langle(-\e\Delta_{N})^{1/2}u,u\rangle.
\]
To prove this, observe that
\[
I_{y}(u)=\sum_{k=0}^{\infty}\frac{1-e^{-y(\e\lambda_{k})^{1/2}}}{y}|\langle
u,\varphi_{k}\rangle_{L^2(\Omega)}|^{2}=\sum_{k=1}^{\infty}
\frac{1-e^{-y(\e\lambda_{k})^{1/2}}}{y(\e\lambda_{k})^{1/2}}(\e\lambda_{k})^{1/2}|
\langle u,\varphi_{k}\rangle_{L^2(\Omega)}|^{2}.
\]
The function $(1-e^{-y(\e\lambda_{k})^{1/2}})/(y(\e\lambda_{k})^{1/2})$
converges increasingly to 1 as $y\rightarrow0^+$. Then, if $u\in \mathcal{H}_\e(\Omega)$ we have
$I_{y}(u)\leq\|(-\e\Delta_N)^{1/4}u\|_{L^2(\Omega)}^2$,
thus $I_{y}(u)$ is uniformly bounded in $y>0$. Viceversa, if $I_{y}(u)$
is bounded uniformly with respect to $y$ it clearly follows that
$u\in\mathcal{H}_\e(\Omega)$. Apart from that, we have
\[
\sup_{y>0}I_{y}(u)=\lim_{y\rightarrow0^+}I_{y}(u)=\sum_{k=0}^{\infty}(\e\lambda_{k})^{1/2}|\langle
u,\varphi_{k}\rangle_{L^2(\Omega)}|^{2}=\langle(-\e\Delta_{N})^{1/2}u,u\rangle.
\]

\smallskip

\noindent {\sc Step 2.} For each $u\in L^2(\Omega)$,
\begin{equation}\label{eq.15}
yI_y(u)=\frac{1}{2}\int_{\Omega}\int_{\Omega}(u(x)-u(z))^2\mathcal{P}_{\e^{1/2}y}(x,z)\,dx\,dz,
\quad y>0,
\end{equation}
where $\mathcal{P}_y(x,z)$ is the Poisson kernel \eqref{Poisson kernel}. Indeed, by the definition of $I_{y}(u)$ we have
\begin{align}
yI_y(u)&=\int_{\Omega}u^2(x)\,dx-\int_{\Omega}u(x)\,e^{-y(-\e\Delta_{N})^{1/2}}u(x)\,dx\nonumber\\
&=\int_{\Omega}u^2(x)\,dx-\int_{\Omega}\int_{\Omega}u(x)\mathcal{P}_{\e^{1/2}y}(x,z)u(z)\,dx\,dz\nonumber\\
&=\int_{\Omega}\int_{\Omega}\left(u^2(x)-u(x)u(z)\right)\mathcal{P}_{\e^{1/2}y}(x,z)\,dx\,dz\label{eq.14},
\end{align}
where we have used that
\[
\int_{\Omega}\mathcal{P}_{y}(x,z)\,dz=1,\quad\hbox{for all}~x\in\Omega,~y>0,
\]
which follows by applying \eqref{integral heat kernel} to \eqref{Poisson kernel}.
Exchanging $x$ with $z$ in \eqref{eq.14} above, by Fubini's Theorem and the symmetry of the Poisson kernel
$\mathcal{P}_{y}(x,z)=\mathcal{P}_{y}(z,x)$, we get
\[
yI_y(u)=\int_{\Omega}\int_{\Omega}\left(u^2(z)-u(x)u(z)\right)\mathcal{P}_{\e^{1/2}y}(x,z)\,dx\,dz.
\]
Thus adding this equation to \eqref{eq.14} we arrive to \eqref{eq.15}.

\smallskip

\noindent {\sc Step 3. Conclusion.} From the subordination formula
\eqref{Poisson kernel} and the two sided estimates in \eqref{Saloff-Coste},
\begin{equation}
\mathcal{P}_{\e^{1/2}y}(x,z)\sim\frac{\e^{1/2}y}{(\e y^2+|x-z|^2)^\frac{n+1}{2}},
\quad\hbox{for all}~x,z\in\Omega,~{0<y<1},\label{eq.17}
\end{equation}
{while $|\mathcal{P}_{\e^{1/2}y}(x,z)|\leq M$, for all $x,z\in\Omega$, $y>1$.}
Then, by Step 2,
\[
I_{y}(u)\sim\int_{\Omega}\int_{\Omega}\frac{\e^{1/2}(u(x)-u(z))^2}{(\e y^2+|x-z|^2)^{\frac{n+1}{2}}}\,dx\,dz,
\]
for all $u\in L^2(\Omega)$. Finally, from Step 1, $u\in \mathcal{H}_\e(\Omega)$ if and only if
\begin{align*}
\sum_{k=1}^{\infty}(\e\lambda_{k})^{1/2}|\langle u,\varphi_{k}\rangle_{L^2(\Omega)}|^2
&= \langle(-\e\Delta_{N})^{1/2}u,u\rangle=\lim_{y\rightarrow0^+}I_{y}(u)\\
&\sim\e^{1/2}\int_{\Omega}\int_{\Omega}\frac{(u(x)-u(z))^2}{|x-z|^{n+1}}\,dx\,dz,
\end{align*}
namely, if and only if $u\in H^{1/2}(\Omega)$.
\end{proof}

\subsection{Trace inequality}

Let us define the space $\mathsf{H}^{\e}(\mathcal{C})$ as the completion of
$H^{1}(\mathcal{C})$ under the scalar product
\begin{equation}\label{eq:dos estrellas}
(v,w)_{\e}=\iint_{\mathcal{C}}\big(\e\nabla_{x} v\cdot\nabla_{x} w+
 v_{y}w_{y}\big)\,dx\,dy+\int_{\Omega}(\tr_{\Omega}v)(\tr_{\Omega}w)\,dx.
\end{equation}
We denote by $\|\cdot\|_\e$ the associated norm:
\begin{equation}\label{eq:trece estrellas}
\|v\|_{\e}^2=\iint_{\mathcal{C}}\big(\e|\nabla_{x} v|^2+
 v_{y}^2\big)\,dx\,dy+\int_{\Omega}(\tr_{\Omega}v)^2\,dx.
 \end{equation}
Notice that, for each $\e>0$,
\[
H^{1}(\mathcal{C})\subset\mathsf{H}^{\e}(\mathcal{C}),
\]
as Hilbert spaces, where the inclusion is strict, since constant functions belong to
$\mathsf{H}^{\e}(\mathcal{C})$ but not to $H^{1}(\mathcal{C})$. Let us point out that for a
finite-height cylinder
$$\mathcal{C}_{k}=\Omega\times(0,k),\quad\hbox{for}~k>0,$$
the following trace inequality
\begin{equation}
\|v\|_{L^{2}(\mathcal{C}_{k})}^{2}\leq C\left(\|\tr_{\Omega} v\|_{L^{2}(\Omega)}^{2}
+\|\nabla_{x,y} v\|_{L^{2}(\mathcal{C}_{k})}^{2}\right),\label{equivnormH}
\end{equation}
holds for all $v\in H^{1}(\mathcal{C}_{k})$, where the constant
$C$ depends only on $\Omega,k$ and $n$.
Indeed, this result can be proved by contradiction and applying
the Rellich--Kondrachov theorem. Then, by \eqref{equivnormH}, we easily infer that
$H^{1}(\mathcal{C}_{k})=\mathsf{H}^{\e}(\mathcal{C}_{k})$ as Hilbert spaces.
According to Remark \ref{remarknonzeroav}, we notice also that in general the $\e$--Neumann extension $v$ of a nonzero average function
$u\in\mathcal{H}_{\e}(\Omega)$ is in $\mathsf{H}^{\e}(\mathcal{C})$ but \emph{not} in $H^{1}(\mathcal{C})$.
Our aim is now to show that there is a trace operator
defined on the whole $\mathsf{H}^{\e}(\mathcal{C})$.

\begin{lemma}[Traces of functions in $\mathsf{H}^\e(\mathcal{C})$]\label{thm:traces}
For all $v\in H^{1}(\mathcal{C})$ one has
\begin{equation}\label{eq.20}
\begin{aligned}
\|(-\e\Delta_{N})^{1/4} v(x,0)\|_{L^{2}(\Omega)}^2&=\sum_{k=1}^\infty(\e\lambda_k)^{1/2}
|\langle \tr_\Omega v,\varphi_k\rangle_{L^2(\Omega)}|^2 \\
&\leq\iint_{\mathcal{C}}\big(\e|\nabla_{x}v|^{2}+|v_{y}|^{2}\big)\,dx\,dy.
\end{aligned}
\end{equation}
In particular, equality holds in \eqref{eq.20} if $v=E^{\e}(\tr_{\Omega}v)$.
Moreover, for each $\e>0$, there is a unique bounded linear operator
$T^{\e}:\mathsf{H}^{\e}(\mathcal{C})\rightarrow \mathcal{H}_\e(\Omega)$ such that $T^{\e}v(x)=v(x,0)$ if $v\in H^{1}(\mathcal{C})$
and, in particular,
\begin{equation}\label{traceemb}
\| T^{\e}v\|_{\mathcal{H}_\e(\Omega)}\leq \|v\|_\e.
\end{equation}
\end{lemma}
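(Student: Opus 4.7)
The plan is to prove \eqref{eq.20} by expanding $v$ in the Neumann eigenbasis $\{\varphi_k\}_{k\in\N_0}$ in the $x$--variable, which reduces the inequality to a one--dimensional estimate for each Fourier mode; the trace extension to $\mathsf{H}^\e(\mathcal{C})$ will then be a routine density argument.

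Given $v\in H^1(\mathcal{C})$, I would write $v(x,y)=\sum_{k=0}^\infty v_k(y)\varphi_k(x)$ with $v_k(y):=\langle v(\cdot,y),\varphi_k\rangle_{L^2(\Omega)}$. Orthonormality of $\{\varphi_k\}$, the eigenvalue identity $-\Delta_N\varphi_k=\lambda_k\varphi_k$ (used slicewise in $y$), Parseval, and Fubini together yield
\[
\iint_{\mathcal{C}}\big(\e|\nabla_{x}v|^{2}+v_{y}^{2}\big)\,dx\,dy=\sum_{k=0}^{\infty}\int_{0}^{\infty}\big(\e\lambda_{k}|v_{k}(y)|^{2}+|v_{k}'(y)|^{2}\big)\,dy.
\]
In particular each $v_k$ lies in $H^1(0,\infty)$, so it is absolutely continuous on $[0,\infty)$, satisfies $v_k(y)\to 0$ as $y\to\infty$, and has boundary value $v_k(0)=\langle\tr_\Omega v,\varphi_k\rangle_{L^2(\Omega)}$. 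For each $k\geq 1$, the fundamental theorem of calculus and Young's inequality then give
\[
(\e\lambda_{k})^{1/2}|v_{k}(0)|^{2}=-2(\e\lambda_{k})^{1/2}\int_{0}^{\infty}v_{k}(y)v_{k}'(y)\,dy\leq\int_{0}^{\infty}\big(\e\lambda_{k}|v_{k}|^{2}+|v_{k}'|^{2}\big)\,dy.
\]
Summing over $k\geq 1$ and discarding the nonnegative $k=0$ contribution on the right--hand side proves the inequality in \eqref{eq.20}. The equality statement for $v=E^{\e}(\tr_{\Omega}v)$ is precisely the computation carried out in \eqref{eq.32}.

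For the trace extension, combining \eqref{eq.20} with the tautology $\|\tr_\Omega v\|_{L^2(\Omega)}^2=\int_\Omega(\tr_\Omega v)^2\,dx$ gives $\|\tr_\Omega v\|_{\mathcal{H}_\e(\Omega)}^2\leq\|v\|_\e^2$ for every $v\in H^1(\mathcal{C})$. Hence the trace map $v\mapsto v(\cdot,0)$ is a bounded linear operator from $(H^1(\mathcal{C}),\|\cdot\|_\e)$ into the Hilbert space $\mathcal{H}_\e(\Omega)$. Since $\mathsf{H}^\e(\mathcal{C})$ is by definition the completion of $H^1(\mathcal{C})$ under $\|\cdot\|_\e$, the bounded linear transformation theorem produces a unique continuous extension $T^\e:\mathsf{H}^\e(\mathcal{C})\to\mathcal{H}_\e(\Omega)$ satisfying \eqref{traceemb}.

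The main technical point to verify carefully is the slicewise Parseval decomposition together with the decay $v_k(y)\to 0$ at infinity that legitimises the integration by parts; both follow from $v\in H^1(\mathcal{C})$ via standard Fubini--Tonelli reasoning, but they are the only substantive steps beyond applying Young's inequality mode by mode.
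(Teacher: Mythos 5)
Your proof is correct, but it proceeds by a genuinely different route than the paper's. The paper reduces to the zero-average case by subtracting the slicewise mean $v(\cdot,y)_\Omega$, and then invokes the energy-minimizing property of the $\e$--Neumann extension $E^\e$ (proved in Theorem \ref{extensth}) together with the energy identity \eqref{eq.32} to obtain the inequality in one stroke; the variational fact ``$E^\e(\tilde u)$ minimizes $\mathcal{F}$ over all $H^1(\mathcal{C})$ functions with the same trace'' does all the work. You instead give a self-contained Fourier-mode argument: expand $v$ in the Neumann eigenbasis, reduce the energy to $\sum_k\int_0^\infty(\e\lambda_k|v_k|^2+|v_k'|^2)\,dy$, and for each mode $k\geq1$ derive $(\e\lambda_k)^{1/2}|v_k(0)|^2\leq\int_0^\infty(\e\lambda_k|v_k|^2+|v_k'|^2)\,dy$ by the fundamental theorem of calculus (using $v_k(\infty)=0$) and Young's inequality. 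Your approach is more elementary and makes the loss of the $k=0$ mode explicit rather than absorbing it through the subtraction of averages; the paper's approach is shorter given that Theorem \ref{extensth} is already available, and reuses its machinery. The density/extension argument in the second half is the same in both. One small point: you should note that the slicewise identities $\int_\Omega|\nabla_x v(\cdot,y)|^2\,dx=\sum_k\lambda_k|v_k(y)|^2$ and $v_k'(y)=\langle v_y(\cdot,y),\varphi_k\rangle_{L^2(\Omega)}$ require the weak formulation \eqref{eq:estrella} applied $y$-slicewise and the commutation of $\partial_y$ with the $L^2(\Omega)$ pairing; you flag this as ``the only substantive steps'' and that assessment is right — they are standard Fubini--Tonelli facts, but they are the place where a careless write-up could slip.
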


\begin{proof}
By Theorem \eqref{thm:trace and H Omega} we have that $v(\cdot,0)\in H^{1/2}(\Omega)=\mathcal{H}_\e(\Omega)$. Take $\tilde{u}=v(\cdot,0)-v(\cdot,0)_\Omega$, and consider the function
\[
\tilde{v}(x,y)=v(x,y)-v(\cdot,y)_\Omega
\]
Then $\tilde{v}\in H^{1}(\mathcal{C})$ and $\tilde{v}(x,0)=\tilde{u}(x)$, with $\tilde{u}_{\Omega}=0$. Thus from the fact that
 $\e$--Neumann extension {$E^\e(\tilde{u})$} of $\tilde{u}$
is an energy minimizer (see Theorem \ref{extensth}) and formula \eqref{eq.32}, we have
\begin{align*}
\iint_{\mathcal{C}}\big(\e|\nabla_{x}v|^{2}+|v_{y}|^{2}\big)\,dx\,dy&\geq\iint_{\mathcal{C}}\big(\e|\nabla_{x}\tilde{v}|^{2}+|\tilde{v}_{y}|^{2}\big)\,dx\,dy\\
&\geq \iint_{\mathcal{C}}\big(\e|\nabla_{x}E^\e(\tilde{u})|^{2}+|E^\e(\tilde{u})_{y}|^{2}\big)\,dx\,dy\\
&=\|(-\e\Delta_{{N}})^{1/4}\tilde{u}\|^{2}_{L^{2}(\Omega)}=\|(-\e\Delta_{{N}})^{1/4}u\|^{2}_{L^{2}(\Omega)}
\end{align*}
that is estimate \eqref{eq.20}.
\normalcolor
Now using \eqref{traceemb}, it is always possible to extend the trace operator on $\Omega\times\left\{0\right\}$
from $H^{1}(\mathcal{C})$ into $\mathsf{H}^{\e}(\mathcal{C})$. Indeed, if $v\in
\mathsf{H}^{\e}(\mathcal{C})$ and $\left\{v_{k}\right\}_{k\in\N}\subset H^{1}(\mathcal{C})$
is a sequence converging to $v$ in $\mathsf{H}^{\e}(\mathcal{C})$, we define the operator
\[
T^{\e}v:=\lim_{k\rightarrow\infty} v_{k}(\cdot,0),
\]
where the limit is taken in $\mathcal{H}_\e(\Omega)$.
Notice that this limit exists because, by inequality \eqref{traceemb},
\[
\|v_{k}(\cdot,0)-v_{l}(\cdot,0)\|_{\mathcal{H}_\e(\Omega)}\leq
\|v_{k}-v_{l}\|_\e\rightarrow0,\quad k,l\to\infty,
\]
and obviously the definition of $T^\e$ over $\mathsf{H}^{\e}(\mathcal{C})$ does not depend on the chosen sequence.
Since
\[
\|v_{k}(\cdot,0)\|_{\mathcal{H}_\e(\Omega)}\leq\|v_{k}\|_\e,\quad\hbox{for all}~k,
\]
it is clear that $T^{\e}$ is linear and bounded. Finally this operator is unique,
due to the density of $H^{1}(\mathcal{C})$ in $\mathsf{H}^{\e}(\mathcal{C})$.
\end{proof}

\subsection{Compactness of the trace operator $T^\e$}

The following remark follows from Theorem \ref{thm:trace and H Omega}
and Lemma \ref{thm:traces}.

\begin{remark}\label{equivHeps}
{\rm For all $u\in \mathcal{H}_{\varepsilon}(\Omega)$, we have
\[
\|u\|_{\mathcal{H}_{\varepsilon}(\Omega)}\geq \mathsf{C}(\e)\|u\|_{H^{1/2}(\Omega)},
\]
where
\[
\mathsf{C}(\e)=c\sqrt{\min\left\{1,\e^{1/2}\right\}}
\]
being $c$ a positive constant depending only on $n$ and $\Omega$. In particular,
$$T^\e:\mathsf{H}^\e(\mathcal{C})\to H^{1/2}(\Omega),$$
and for every $v\in\mathsf{H}^\e(\mathcal{C})$,
\begin{equation}\label{traceembH}
\mathsf{C}(\e)\|T^\e v\|_{H^{1/2}(\Omega)}\leq \|v\|_\e.
\end{equation}}
\end{remark}
Of course from inequality \eqref{traceembH} and the Sobolev embedding (see \cite{Adams})
$$H^{1/2}(\Omega)\hookrightarrow L^{\frac{2n}{n-1}}(\Omega)$$ it follows that, for all $v\in
\mathsf{H}^{\e}(\mathcal{C})$,
\begin{equation*}
\|T^{\e}v\|_{L^{\frac{2n}{n-1}}(\Omega)}\leq C_0\|T^{\e}v\|_{H^{1/2}(\Omega)}
\leq C_0\mathsf{C}(\e)^{-1}\|v\|_\e.
\end{equation*}
In conclusion,
\begin{equation}\label{traceeqcomp}
C_0^{-1}\mathsf{C}(\e)\|T^{\e}v\|_{L^{\frac{2n}{n-1}}(\Omega)}\leq\|v\|_\e,
\end{equation}
where the constant $C_0$ does not depend on $\e$. Inequality \eqref{traceeqcomp}
will be called the \emph{trace embedding inequality} for the
space $\mathsf{H}^{\e}(\mathcal{C})$.
An immediate consequence is the following
compactness result for traces of functions in $\mathsf{H}^{\e}(\mathcal{C})$,
which follows from the compact embedding $
H^{1/2}(\Omega)\subset\subset L^q(\Omega)$ for $1{\leq}q<2n/(n-1)$.

\begin{corollary}\label{compactness of the trace}
Fix $\e>0$. Then
$$T^{\e}(\mathsf{H^{\e}}(\mathcal{C}))\subset\subset L^{q}(\Omega),\quad
\hbox{for all}~1\leq q<2n/(n-1).$$
\end{corollary}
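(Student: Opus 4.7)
The plan is to deduce the compactness of $T^\e : \mathsf{H}^\e(\mathcal{C}) \to L^q(\Omega)$ by factoring it through $H^{1/2}(\Omega)$ and invoking the compactness of the (standard) fractional Sobolev embedding on $\Omega$. Since $\e>0$ is fixed, the constant $\mathsf{C}(\e)$ appearing in Remark~\ref{equivHeps} is a fixed positive number and plays no essential role in the argument; it only affects the quantitative bound, not the compactness.

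More concretely, I would start with a bounded sequence $\{v_k\}_{k\in\N}\subset\mathsf{H}^\e(\mathcal{C})$, say $\|v_k\|_\e\leq M$ for all $k$. Applying the trace embedding \eqref{traceembH} from Remark~\ref{equivHeps}, I obtain
\[
\|T^\e v_k\|_{H^{1/2}(\Omega)}\leq \mathsf{C}(\e)^{-1}\|v_k\|_\e\leq \mathsf{C}(\e)^{-1}M,
\]
so that $\{T^\e v_k\}$ is a bounded sequence in $H^{1/2}(\Omega)$. The smoothness of $\partial\Omega$ allows one to invoke the Rellich--Kondrachov-type compact embedding for fractional Sobolev spaces on bounded Lipschitz domains (as in \cite{Adams}, or equivalently via the classical interpolation/extension argument for $H^{1/2}$), which gives
\[
H^{1/2}(\Omega)\subset\subset L^q(\Omega),\quad\hbox{for all}~1\leq q<\tfrac{2n}{n-1}.
\]
Therefore a subsequence $\{T^\e v_{k_j}\}$ converges strongly in $L^q(\Omega)$, which is exactly the compactness statement $T^\e(\mathsf{H}^\e(\mathcal{C}))\subset\subset L^q(\Omega)$.

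The whole argument is a direct chaining of two results already established: the continuous trace inequality \eqref{traceembH} and the compact fractional Sobolev embedding on $\Omega$. There is essentially no obstacle beyond citing the correct compactness result for $H^{1/2}(\Omega)$; the fact that $\mathcal{C}$ is an unbounded cylinder is handled entirely by the trace inequality, which already absorbs the behavior at $y\to\infty$ into the $\mathsf{H}^\e(\mathcal{C})$-norm. Thus the proof reduces to two short lines invoking \eqref{traceembH} and the compact embedding.
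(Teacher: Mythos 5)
Your proof is correct and is essentially identical to the paper's argument: the paper also obtains the corollary by chaining the bounded trace operator $T^\e:\mathsf{H}^\e(\mathcal{C})\to H^{1/2}(\Omega)$ from Remark~\ref{equivHeps} (inequality~\eqref{traceembH}) with the compact embedding $H^{1/2}(\Omega)\subset\subset L^q(\Omega)$ for $1\leq q<2n/(n-1)$.
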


\section{The linear Neumann problem: regularity}\label{Section:lineal}

In this section we study regularity properties
of solutions $u$ to the linear fractional Neumann problem \eqref{eq.3},
where $f\in H^{-1/2}(\Omega)$. We understand \eqref{eq.3} in the sense that
\begin{equation}\label{numero}
(-\e\Delta_N)^{1/2}u+u=f,\quad\hbox{in}~H^{-1/2}(\Omega).
\end{equation}
The linear problem \eqref{eq.3} has a unique explicit solution $u$ in $H^{1/2}(\Omega)=\mathcal{H}_\e(\Omega)$ which is given
in terms of the Neumann eigenfunctions of the Laplacian \eqref{eq.1}. Indeed, by virtue of Theorem \ref{thm:trace and H Omega}, we see
that $H^{-1/2}(\Omega)$ coincides with the dual space of $\mathcal{H}_\e(\Omega)$, for each $\e>0$.
Then the Riesz representation
theorem implies that $f\in\mathcal{H}_\e(\Omega)'$ can be written
in a unique way as $f=\sum_{k=0}^\infty f_k\varphi_k$ in $H^{-1/2}(\Omega)$,
where $\sum_{k=0}^\infty\big((\e\lambda_k)^{1/2}+1\big)^{-1}f_k^2<\infty$.
Then the solution $u$ {is given by}
\begin{equation}\label{sollinprob}
u(x)=\sum_{k=0}^{\infty}\frac{f_k}{(\e\lambda_k)^{1/2}+1}\varphi_{k}(x)\in H^{1/2}(\Omega).
\end{equation}
It is readily verified that $u$ solves \eqref{eq.3} and it is unique because
of the orthogonality of the eigenfunctions $\varphi_k$.

\begin{remark}[Regularity in $H^{1}(\Omega)$]
{\rm If $f\in L^{2}(\Omega)$, the solution $u$ to \eqref{eq.3} is actually in $H^{1}(\Omega)$.  Indeed, according to \eqref{sollinprob} we have
\[
u=\sum_{k=0}^{\infty}\frac{\langle f,\varphi_k\rangle_{L^{2}(\Omega)}}{(\e\lambda_k)^{1/2}+1}\varphi_{k},
\]
and
\[
\sum_{k=1}^{\infty}\lambda_{k}\frac{|\langle f,\varphi_k\rangle_{L^{2}(\Omega)}|^2}{|(\e\lambda_k)^{1/2}+1|^{2}}
\leq\frac{1}{\e}\sum_{k=1}^\infty|\langle f,\varphi_k\rangle_{L^{2}(\Omega)}|^2<\infty.
\]
Hence, by the spectral representation of $H^{1}(\Omega)$ provided by \eqref{eq.H1}, $u\in H^{1}(\Omega)$.}
\end{remark}

By virtue of the extension problem for $(-\e\Delta_N)^{1/2}$, {we} can show that $u$ is the trace over $\Omega$
of the solution $v$ to the following extension problem:
\begin{equation}\label{eq.6}
\begin{cases}
  \e\Delta_{x}v+v_{yy}=0,&\hbox{in}~\mathcal{C},\\
 \partial_{\nu}v=0,&\hbox{on}~\partial_L\mathcal{C},\\
 -v_y(x,0)+v(x,0)=f,&\hbox{in}~H^{-1/2}(\Omega).
\end{cases}
\end{equation}
Indeed, by multiplying formally the equation above by a test function and integrating by parts, we can give the following suitable definition of weak solution.

\begin{definition}\label{weaksollinprob}Let $f\in H^{-1/2}(\Omega)$. We say that a function $v\in
\mathsf{H}^\e(\mathcal{C})$ is a weak solution to \eqref{eq.6} if
\begin{equation}\iint_{\mathcal{C}}\big(\e\nabla_{x} v\cdot\nabla_{x} w+
 v_{y}w_{y}\big)\,dx\,dy+\int_{\Omega}(\tr_{\Omega}v)(\tr_{\Omega}w)\,dx=\langle f,\tr_\Omega w\rangle,\label{eq.33}\end{equation}
 for every $w\in \mathsf{H}^\e(\mathcal{C})$.
 \end{definition}

 Notice that $T^\e w\in H^{1/2}(\Omega)$ for all $w\in \mathsf{H}^\e(\mathcal{C})$ so the right hand side in \eqref{eq.33}
is well defined.

The following Lemma provides the explicit form of the weak solution to problem \eqref{eq.6}.

\begin{lemma}\label{existweaksollin}
The function
$$v(x,y)=\sum_{k=0}^\infty e^{-y(\e\lambda_k)^{1/2}}\frac{f_k}{(\e\lambda_k)^{1/2}+1}\varphi_{k}(x),$$
is the unique weak solution $v\in \mathsf{H}^\e(\mathcal{C})$ of \eqref{eq.6}. Moreover,
$T^\e v$
coincides with the unique solution $u\in H^{1/2}(\Omega)$ to the linear problem \eqref{eq.3}.
\end{lemma}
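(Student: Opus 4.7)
The plan is to recognize $v$ as the $\e$-Neumann harmonic extension (in the generalized sense of Remark \ref{remarknonzeroav}) of the candidate
$$u=\sum_{k=0}^\infty\frac{f_k}{(\e\lambda_k)^{1/2}+1}\varphi_k,$$
and then to verify four items: (a) $v\in\mathsf{H}^\e(\mathcal{C})$; (b) $v$ satisfies the weak formulation \eqref{eq.33}; (c) uniqueness in $\mathsf{H}^\e(\mathcal{C})$; (d) $T^\e v$ coincides with the unique $H^{1/2}(\Omega)$ solution of \eqref{eq.3}.

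For (a), I would split off the $k=0$ mode, which contributes only the constant $f_0$ to $v$ and is harmless since $\|\cdot\|_\e^2$ involves only the gradient and the trace (this is exactly the reason $\mathsf{H}^\e(\mathcal{C})$ rather than $H^1(\mathcal{C})$ is the natural ambient space for data with nonzero average). The remaining Fourier computation is parallel to \eqref{eq.32}: orthonormality of $\{\varphi_k\}$ combined with the explicit $y$-integrals of $e^{-2y(\e\lambda_k)^{1/2}}$ for $k\geq 1$ gives
$$\iint_\mathcal{C}\big(\e|\nabla_xv|^2+v_y^2\big)\,dx\,dy=\sum_{k=1}^\infty\frac{(\e\lambda_k)^{1/2}f_k^2}{((\e\lambda_k)^{1/2}+1)^2}\leq\sum_{k=0}^\infty\frac{f_k^2}{(\e\lambda_k)^{1/2}+1}=\|f\|_{H^{-1/2}(\Omega)}^2,$$
while $\int_\Omega(\tr_\Omega v)^2\,dx=\sum_{k=0}^\infty f_k^2/((\e\lambda_k)^{1/2}+1)^2$ is finite by the same summability. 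This computation also certifies that $u\in\mathcal{H}_\e(\Omega)=H^{1/2}(\Omega)$.

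For (b), by density of $H^1(\mathcal{C})$ in $\mathsf{H}^\e(\mathcal{C})$ and the continuity of $T^\e$ from Lemma \ref{thm:traces}, it is enough to test against $w\in H^1(\mathcal{C})$. Expanding $w(x,y)=\sum_k w_k(y)\varphi_k(x)$ with $w_k(y)=\langle w(\cdot,y),\varphi_k\rangle_{L^2(\Omega)}$, the $k$-th mode $v_k(y)=e^{-y(\e\lambda_k)^{1/2}}f_k/((\e\lambda_k)^{1/2}+1)$ satisfies the ODE $v_k''=\e\lambda_k v_k$. A mode-by-mode integration by parts in $y$, with the boundary term at infinity vanishing by the $L^2$-decay of $v_y$ from (a), yields
$$\int_0^\infty\big(v_k'w_k'+\e\lambda_k v_kw_k\big)\,dy=-v_k'(0)w_k(0)=(\e\lambda_k)^{1/2}\frac{f_k}{(\e\lambda_k)^{1/2}+1}w_k(0).$$
Summing over $k$ via Parseval and adding the trace contribution $\int_\Omega v(x,0)w(x,0)\,dx=\sum_k\tfrac{f_k}{(\e\lambda_k)^{1/2}+1}w_k(0)$, the prefactors telescope, $\tfrac{(\e\lambda_k)^{1/2}+1}{(\e\lambda_k)^{1/2}+1}=1$, giving $\sum_k f_kw_k(0)=\langle f,\tr_\Omega w\rangle$, which is exactly \eqref{eq.33}.

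Uniqueness (c) is the standard energy argument already employed in Theorem \ref{extensth}: if $v_1,v_2$ are two weak solutions then $V=v_1-v_2$ satisfies \eqref{eq.33} with $f=0$, and testing with $w=V$ yields $\|V\|_\e^2=0$, forcing $V\equiv0$. For (d), one has $T^\e v=v(\cdot,0)=u$ by construction; the scalar identity $(\e\lambda_k)^{1/2}a_k+a_k=f_k$ with $a_k=f_k/((\e\lambda_k)^{1/2}+1)$ is precisely \eqref{numero} read off in the eigenfunction basis, and uniqueness of $u$ in $H^{1/2}(\Omega)$ follows from the injectivity of $(-\e\Delta_N)^{1/2}+I$ on this basis. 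The main technical nuisance I expect is bookkeeping of the constant $k=0$ mode (which sits in $\mathsf{H}^\e(\mathcal{C})$ but not in $L^2(\mathcal{C})$) during the integration by parts; everything else is a direct consequence of the spectral definitions together with the extension calculus already developed in Section \ref{Section1}.
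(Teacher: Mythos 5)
Your proposal is correct and follows essentially the same route as the paper: expand $v$ and the test function in the Neumann eigenbasis, integrate by parts in $y$ mode-by-mode (using $v_k''=\e\lambda_k v_k$), and observe that the Robin-type boundary term telescopes via $(\e\lambda_k)^{1/2}+1$ to produce $\langle f,\tr_\Omega w\rangle$, then close by density and the energy uniqueness argument. The only cosmetic difference is that you make the membership $v\in\mathsf{H}^\e(\mathcal{C})$ and the treatment of the $k=0$ mode slightly more explicit than the paper's brief reference to "simple computations as in the proof of Theorem \ref{extensth}."
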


\begin{proof}
Simple computations as in the proof of Theorem \ref{extensth} show that $v\in
\mathsf{H}^\e(\mathcal{C})$.
Let $w\in H^1(\mathcal{C})$ be a test function. We can write, for almost every $y\geq0$,
$$w(x,y)=\sum_{k=0}^\infty \langle w(\cdot,y),\varphi_k\rangle_{L^2(\Omega)}\varphi_k=:\sum_{k=0}^\infty
w_k(y)\varphi_k(x).$$
Then by using \eqref{eq.2.2}, the definition of $v$ and \eqref{eq:estrella}, we have for almost every $y>0$
$$\int_\Omega v_{yy}w\,dx=\sum_{k=0}^\infty \e\lambda_ke^{-y(\e\lambda_k)^{1/2}}\frac{f_k}{(\e
 \lambda_k)^{1/2}+1}w_{k}(y)=\e\int_\Omega\nabla_xv\cdot\nabla_xw\,dx.$$
Integrating this identity in $y$ and applying integration by parts,
\begin{align*}
 \iint_{\mathcal{C}}\big(\e\nabla_xv\cdot\nabla_xw+v_yw_y\big)\,dx\,dy &=
-\lim_{y\to0^+}\int_\Omega v_y(x,y)w(x,y)\,dx \\
 &=\sum_{k=0}^\infty(\e\lambda_k)^{1/2}\frac{f_k}
 {(\e\lambda_k)^{1/2}+1}w_k(0) \\
 &= \sum_{k=0}^\infty f_kw_k(0)
 -\sum_{k=0}^\infty\frac{f_k}{(\e\lambda_k)^{1/2}+1}w_k(0) \\
 &= \langle f,\tr_\Omega w\rangle-\int_\Omega(\tr_\Omega v)(\tr_\Omega w)\,dx.
 \end{align*}
 Hence a density argument shows that
 the function $v$ is a weak solution in the sense of Definition \ref{weaksollinprob}.
Uniqueness can be proved in two ways as explained in the proof of Theorem \ref{extensth}.
\end{proof}

\subsection{Harnack estimates}

When studying the spike layer solutions to the semilinear problem we will need the following
Harnack estimate.

\begin{theorem}[Harnack inequality]\label{Harnack}
Let $u\in H^{1/2}(\Omega)$ be a nonnegative solution to
\begin{equation*}
\begin{cases}
 (-\e\Delta)^{1/2}u+c(x)u=0,&\hbox{in}~\Omega,\\
 \partial_\nu u=0,&\hbox{on}~\partial\Omega,
\end{cases}
\end{equation*}
where $c(x)$ is a bounded function. Then for any $R>0$ there exists a positive constant
$C$ depending only on $n$, $\Omega$ and $R(\|c\|_{L^\infty(\Omega)}/\e)^{1/2}$
such that for any ball $B=B(x_0,R)$, with $x_0\in\overline{\Omega}$,
$$\sup_{B\cap\Omega}u\leq C\inf_{B\cap\Omega}u.$$
\end{theorem}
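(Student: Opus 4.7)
The plan is to lift the problem to the extension cylinder $\mathcal{C}$, normalize the parameter $\e$ by a rescaling in the $y$-variable, and then appeal to Moser-type Harnack inequalities for harmonic functions with mixed Neumann and Robin boundary conditions on the half-cylinder.

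First I would let $v = E^\e(u)$ be the $\e$-Neumann harmonic extension from Theorem \ref{extensth}. By \eqref{eq.36}, the nonlocal equation $(-\e\Delta_N)^{1/2}u = -c(x)u$ translates into the Robin-type boundary condition $v_y(x,0) = c(x)v(x,0)$ on $\Omega$, so $v$ is a nonnegative weak solution of
$$\e\Delta_x v + v_{yy} = 0 \text{ in } \mathcal{C}, \quad \partial_\nu v = 0 \text{ on } \partial_L\mathcal{C}, \quad v_y = c(x) v \text{ on } \Omega\times\{0\}.$$
Nonnegativity of $v$ follows from the Poisson representation \eqref{eq.5}, because the Neumann--Poisson kernel inherits pointwise positivity from $\mathcal{W}_t$ via the subordination formula \eqref{Poisson kernel}.

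Next I would remove the $\e$-dependence from the interior operator by setting $\tilde v(x,y) := v(x, y/\sqrt{\e})$, so that $\tilde v \geq 0$ satisfies $\Delta_{x,y}\tilde v = 0$ in $\mathcal{C}$, $\partial_\nu \tilde v = 0$ on $\partial_L\mathcal{C}$, and the Robin condition becomes $\tilde v_y(x,0) = \tilde c(x)\tilde v(x,0)$ with $\tilde c(x) = c(x)/\sqrt{\e}$. To prove the desired inequality for $u = \tilde v(\cdot,0)$ on $B(x_0,R)\cap\Omega$, it suffices to establish a Harnack inequality for $\tilde v$ on the half-ball $\{(x,y) : |x-x_0|^2 + y^2 < R^2,\ y > 0\} \cap \mathcal{C}$ with constant depending on $R\|\tilde c\|_{L^\infty} = R(\|c\|_{L^\infty(\Omega)}/\e)^{1/2}$. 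I would then locally flatten $\partial\Omega$ using its smoothness, reflect evenly across the flattened $\partial\Omega\times[0,\infty)$ to eliminate the Neumann condition, and reflect evenly across $\{y=0\}$; the latter reflection converts the Robin condition into an interior elliptic equation with a bounded coefficient supported on $\{y=0\}$. A standard Moser iteration on this reflected problem yields local $L^\infty$ bounds and a weak Harnack inequality, whose combination produces the full Harnack inequality with the claimed dependence of the constant.

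The main technical obstacle is the Harnack inequality at the base of the cylinder in the presence of the Robin condition, and the careful tracking of how $\|\tilde c\|_{L^\infty}$ enters the constant through the Moser iteration. This is closely parallel to the corresponding statement for the fractional Dirichlet Laplacian in \cite{Cabre-Tan}, and the argument there adapts without essential change because the boundary structure on $\Omega\times\{0\}$ is identical; the only new ingredient is the treatment of the Neumann condition on $\partial_L\mathcal{C}$, which (once $\partial\Omega$ is flattened) reduces to a routine application of interior Harnack estimates after reflection.
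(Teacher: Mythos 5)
Your proposal follows essentially the same strategy as the paper's proof: pass to the $\e$--Neumann extension $v$, reflect evenly across $\{y=0\}$ and, after flattening $\partial\Omega$, across the lateral boundary, and apply a Harnack estimate for the resulting reflected elliptic problem, which the paper obtains by citing \cite{Cabre-Sola} in the interior and the argument of \cite{Lin-Ni-Takagi} at the boundary. Two small corrections are in order. First, the identity $R\|\tilde c\|_{L^\infty}=R(\|c\|_{L^\infty(\Omega)}/\e)^{1/2}$ is an algebraic slip: since $\tilde c=c/\sqrt{\e}$ one has $R\|\tilde c\|_{L^\infty}=R\|c\|_{L^\infty(\Omega)}\,\e^{-1/2}$, and this (not $R\|c\|_{L^\infty}^{1/2}\e^{-1/2}$) is the genuinely scale-invariant combination for the half-space Robin problem, since rescaling $x$ shrinks the ball radius by exactly the factor by which it inflates the Robin coefficient. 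Second, the even reflection across $\{y=0\}$ does not yield an interior equation with a bounded zeroth-order coefficient: the reflected function satisfies $\Delta\tilde v^{\ast}=2\,\tilde c(x)\,\tilde v^{\ast}\,\delta_{\{y=0\}}$ distributionally, a surface-measure potential, so the Moser iteration has to be run with trace inequalities to absorb the boundary term, exactly as in \cite{Cabre-Sola} and \cite{Cabre-Tan}, rather than as a standard interior argument. Neither point invalidates the plan, but both should be repaired for a rigorous write-up.
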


\begin{proof}
The proof relies on the extension problem and the reflection method.
Let $v$ be a nonnegative solution to the extension problem
\begin{equation*}
\begin{cases}
 \e\Delta_xv+v_{yy}=0,&\hbox{in}~\mathcal{C},\\
 \partial_\nu v=0,&\hbox{on}~\partial_L\mathcal{C},\\
 v_y=c(x)v(x,0),&\hbox{on}~\Omega.
\end{cases}
\end{equation*}
If the ball $B$ lies in the interior of the domain $\Omega$, then
we can repeat the proof of \cite[Lemma~2.5]{Cabre-Sola}. If $x_0$ lies in the boundary of
$\Omega$, we first extend a suitable modification of
$v$ to the whole cilinder $\Omega\times\mathbb{R}$ as in \cite[Lemma~2.5]{Cabre-Sola}.
Then we can follow the idea of \cite[Proof~of~Lemma~4.3]{Lin-Ni-Takagi}.
We flatten the boundary around $x_0$ and reflect the solution
to have a solution to a linear elliptic equation on a cilinder $B_r\times\R$.
The interior Harnack inequality for linear equations applies and we get the
result on the boundary.
\end{proof}

\subsection{Regularity}

Here we establish the basic regularity properties for the linear problem \eqref{eq.3}.
Again the use of the Neumann heat semigroup plays a central role.

\begin{theorem}[Regularity estimates]\label{Thm:regularity}
Let $u\in H^{1/2}(\Omega)$ and $f\in H^{-1/2}(\Omega)$ be such that \eqref{eq.3} holds in
the sense of \eqref{numero}.
Then the following assertions hold.
\begin{enumerate}[$(1)$]
 \item Let $1\leq p\leq\infty$ and $f\in L^p(\Omega)$. Then $u\in L^p(\Omega)$ and
 \begin{equation}\|u\|_{L^p(\Omega)}\leq C_{\e,n,p,\Omega}\,\|f\|_{L^p(\Omega)}\label{reg1}.\end{equation}
 Moreover:\\
 $(a_{1})$ if $1<p<n$, then $u\in L^q(\Omega)$ for $p\leq q\leq\frac{np}{n-p}$, and
 \begin{equation}\|u\|_{L^q(\Omega)}\leq C_{\e,n,p,q,\Omega}\,\|f\|_{L^p(\Omega)};\label{reg2}\end{equation}
  $(b_{1})$ if $n<p\leq\infty$, then $u\in L^\infty(\Omega)$ and
 \begin{equation}\|u\|_{L^\infty(\Omega)}\leq C_{\e,n,p,\Omega}\,\|f\|_{L^p(\Omega)}.\label{reg3}\end{equation}
 \item If $f\in L^p(\Omega)$ for $n\leq p<\infty$, then:\\
 $(a_{2})$ for $p>n$ we have $u\in C^{0,\alpha}(\overline{\Omega})$,
 where $\alpha:=1-\frac{n}{p}\in(0,1)$ and
 \begin{equation}\|u\|_{C^{0,\alpha}(\overline{\Omega})}\leq C_{\e,n,p,\Omega}\,\|f\|_{L^p(\Omega)};\label{reg4}\end{equation}
 $(b_{2})$ for $p=n$, we have $u\in BMO(\Omega)$ and
 \begin{equation}\|u\|_{BMO(\Omega)}\leq C_{\e,n,p,\Omega}\,\|f\|_{L^p(\Omega)}.\label{reg5}\end{equation}
$(c_{2})$ Besides, if $p\leq r<\infty$ we have $u\in L^{r}(\Omega)$ and the $L^{r}$- norm of $u$ can be added at the left-hand side of both inequalities  \eqref{reg4} and \eqref{reg5}, provided the constant at the right-hand side depends on $r$ too.
 \item If $f\in L^{\infty}(\Omega)$ then $u\in C^{0,\alpha}(\overline{\Omega})$ for any $0<\alpha<1$,
 and
 $$\|u\|_{C^{0,\alpha}(\overline{\Omega})}\leq C_{\e,n,\alpha,\Omega}\,\|f\|_{L^\infty(\Omega)}.$$
 \item If $f\in C^{0,\alpha}(\overline{\Omega})$ for $0<\alpha<1$, then $u\in C^{1,\alpha}(\overline{\Omega})$
 and
 $$\|u\|_{C^{1,\alpha}(\overline{\Omega})}\leq C_{\e,n,\alpha,\Omega}\,\|f\|_{C^{0,\alpha}(\overline{\Omega})}.$$
\end{enumerate}
\end{theorem}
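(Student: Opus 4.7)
The plan is to use the explicit representation of $u$ via the Poisson semigroup generated by $(-\e\Delta_{N})^{1/2}$. Starting from the spectral formula \eqref{sollinprob}, the elementary identity $(1+\mu)^{-1}=\int_{0}^{\infty}e^{-s(1+\mu)}\,ds$ applied to $\mu=(\e\lambda_{k})^{1/2}$, combined with the subordination formula \eqref{Poisson kernel}, will yield the kernel representation
\begin{equation*}
u(x)=\int_{0}^{\infty}e^{-s}\,e^{-s(-\e\Delta_{N})^{1/2}}f(x)\,ds=\int_{\Omega}K_{\e}(x,z)f(z)\,dz,\quad K_{\e}(x,z):=\int_{0}^{\infty}e^{-s}\mathcal{P}_{\e^{1/2}s}(x,z)\,ds.
\end{equation*}
All the stated estimates then reduce to pointwise size and smoothness bounds on $K_{\e}$, obtained by combining the Poisson kernel estimates \eqref{eq.17}, the heat kernel gradient bound \eqref{Wang-Yan}, and the exponential weight $e^{-s}$, which absorbs the long-time part of the integral where $\mathcal{P}_{\e^{1/2}s}$ is only bounded.

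For part (1), I would use the normalization $\int_{\Omega}\mathcal{P}_{y}(x,z)\,dz=1$ (a consequence of \eqref{integral heat kernel} and \eqref{Poisson kernel}) together with the symmetry of $\mathcal{P}_{y}(x,z)$ to get $\int_{\Omega}K_{\e}(x,z)\,dz=\int_{\Omega}K_{\e}(x,z)\,dx=1$; Schur's test then makes $f\mapsto u$ a contraction on every $L^{p}(\Omega)$, $1\leq p\leq\infty$, establishing \eqref{reg1}. For the sharper estimates \eqref{reg2}, \eqref{reg3}, splitting the defining integral for $K_{\e}$ at $s=1$ and applying \eqref{eq.17} on $(0,1)$ (the tail on $(1,\infty)$ being controlled by $e^{-s}$) will produce the pointwise bound
\begin{equation*}
K_{\e}(x,z)\leq C_{\e,\Omega}\,|x-z|^{-(n-1)},\quad x,z\in\Omega,~x\neq z.
\end{equation*}
From here the Hardy--Littlewood--Sobolev inequality delivers \eqref{reg2} for $1<p<n$ with $q=np/(n-p)$, while for $p>n$ the function $z\mapsto|x-z|^{-(n-1)}$ belongs to $L^{p'}(\Omega)$ uniformly in $x$, so H\"older's inequality gives \eqref{reg3}.

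For the H\"older and $BMO$ estimates in parts (2) and (3), the same strategy applied to $\nabla_{x}K_{\e}$, with \eqref{Wang-Yan} inserted into the subordination formula \eqref{Poisson kernel}, will produce
\begin{equation*}
|\nabla_{x}K_{\e}(x,z)|\leq C_{\e,\Omega}\,|x-z|^{-n},\quad x,z\in\Omega,~x\neq z.
\end{equation*}
A standard oscillation argument for singular integrals whose kernel satisfies such size and smoothness bounds (in the spirit of the Dirichlet analysis in \cite{Cabre-Tan} and the Neumann analysis in \cite{PellacciMontef}) then yields H\"older continuity with exponent $\alpha=1-n/p$ for $n<p<\infty$ and a $BMO$ bound for $p=n$, proving \eqref{reg4} and \eqref{reg5}. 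Part (3) is immediate from $L^{\infty}(\Omega)\subset L^{p}(\Omega)$ for every finite $p$; the added $L^{r}$ control in (2)($c_{2}$) follows from $C^{0,\alpha}(\overline{\Omega})\hookrightarrow L^{\infty}(\Omega)$ and the John--Nirenberg embedding $BMO(\Omega)\hookrightarrow L^{r}(\Omega)$ for $r<\infty$.

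Part (4) I would prove via the extension problem \eqref{eq.6}: the function $v$ solves the constant-coefficient elliptic equation $\e\Delta_{x}v+v_{yy}=0$ in $\mathcal{C}$ with homogeneous Neumann condition on $\partial_{L}\mathcal{C}$ and the oblique (Robin-type) condition $-v_{y}+v=f\in C^{0,\alpha}(\overline{\Omega})$ on $\Omega\times\{0\}$. Flattening $\partial\Omega$ locally and applying classical boundary Schauder theory for oblique-derivative problems gives $v\in C^{1,\alpha}$ up to $\overline{\Omega}\times[0,\infty)$, whence $u=v(\cdot,0)\in C^{1,\alpha}(\overline{\Omega})$. The hardest point will be the edge $\partial\Omega\times\{0\}$, where two smooth boundary conditions meet at a right angle: although the right-angle geometry and compatibility of the conditions are favorable, careful localization and Schauder estimates for mixed oblique problems are required there. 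Throughout parts (1)--(3), an analogous technical issue is the need to track the explicit $\e$-dependence of the constants in the kernel estimates so as to obtain bounds of the claimed uniform form.
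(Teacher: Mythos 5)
Your treatment of parts (1)--(3) is essentially the paper's: the same kernel representation $u(x)=\int_\Omega L(x,z)f(z)\,dz$ with $L(x,z)=\int_0^\infty e^{-s}\mathcal{P}_{\e^{1/2}s}(x,z)\,ds$, the same pointwise bound $L(x,z)\lesssim|x-z|^{-(n-1)}$ from \eqref{eq.17}, Hardy--Littlewood--Sobolev for $(a_1)$, and a kernel gradient bound $|\nabla_x L(x,z)|\lesssim|x-z|^{-n}$ from \eqref{Wang-Yan} feeding a Campanato/oscillation argument for $(2)$ and $(3)$. Your minor variants are fine: Schur's test in place of Young's inequality for \eqref{reg1}, direct H\"older via $|x-z|^{-(n-1)}\in L^{p'}$ for \eqref{reg3}, and John--Nirenberg in place of the paper's bootstrapping of $(a_1)$--$(b_1)$ for $(c_2)$; none of these changes the substance.

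Part (4) is where your route genuinely diverges and where the gap lies. You propose to treat the extension problem directly as a Robin/oblique boundary value problem on $\mathcal{C}$, flatten the lateral boundary, and invoke Schauder theory, while acknowledging that the ``hardest point'' is the edge $\partial\Omega\times\{0\}$ where the lateral Neumann condition meets the Robin condition at a right angle. That acknowledgment is correct, but it is precisely the issue your proposal does not resolve: Schauder regularity at a wedge where two different smooth boundary conditions meet is not a consequence of ``classical boundary Schauder theory for oblique-derivative problems'' and requires either a nontrivial corner analysis (Kondrat'ev/Grisvard type) or an explicit reflection argument in which one must also track the regularity of the reflected nondivergence coefficients. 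The paper sidesteps this entirely with a trick you do not mention: after rescaling to $\widetilde{v}(x,y)=v(x,\e^{-1/2}y)$, it introduces the primitive $w(x,y)=\int_0^y\widetilde{v}(x,t)\,dt$. Since $(\Delta_{x,y}w)_y=\Delta_x\widetilde{v}+\widetilde{v}_{yy}=0$, one finds $-\Delta_{x,y}w=h(x)$ with $h=\e^{-1/2}(f-u)\in C^{0,\alpha}(\overline\Omega)$ (by part (3)), $w(\cdot,0)=0$ on the bottom, and $\partial_\nu w=0$ laterally. The Robin condition has thus been traded for a Dirichlet condition, so after flattening and even reflection across the lateral face the edge disappears and only a standard Dirichlet--Schauder estimate (Gilbarg--Trudinger, Lemma~6.18) is needed, yielding $w\in C^{2,\alpha}$ and hence $v\in C^{1,\alpha}(\overline{\mathcal{C}})$. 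Without this integration trick, or a complete corner regularity argument in its place, your part (4) remains an outline rather than a proof.
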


\begin{remark}
{\rm If the constants $C$ in the estimates above are tracked down {along the proof},
we would readily see that they blow up when $\e\to0^+$.}
\end{remark}

\begin{proof}[Proof of Theorem \ref{Thm:regularity}]
Recall \eqref{sollinprob}.
If $f$ is also locally integrable then we can write
\begin{equation}\label{u(x)}
\begin{aligned}
 u(x) &= ((-\e\Delta_N)^{1/2}+I)^{-1}f(x)=\int_0^\infty e^{-t}e^{-t(-\e\Delta_N)^{1/2}}f(x)\,dt \\
 &= \int_0^\infty e^{-t}e^{-(t\e^{1/2})(-\Delta_N)^{1/2}}f(x)\,dt= \int_{\Omega}L(x,z)f(z)\,dz,
\end{aligned}
\end{equation}
where
$$L(x,z)=\int_0^\infty e^{-t}\mathcal{P}_{(\e^{1/2}t)}(x,z)\,dt,\quad x,z\in\Omega,$$
and $\mathcal{P}_t$ is the kernel of the Poisson semigroup for the Neumann Laplacian (see \eqref{Poisson kernel}). Now,
from \eqref{eq.17} and by using the change of variables $s=\frac{\e^{1/2}t}{|x-z|}$ it follows
that, for some finite constant $C_n$,
$$0\leq L(x,{z})\leq \frac{C_{{n,\e,\Omega}}}{|x-z|^{n-1}},\quad\hbox{for}~x,z\in\Omega.$$
The estimate above implies that
$$|u(x)|\leq {C_{n,\e,\Omega}}N\ast|f\chi_\Omega|(x),$$
where $N(x)=|x|^{1-n}$ is the kernel of the classical fractional integral of order $1$.

\

\noindent$(1)$ This result follows from Young's inequality for convolutions,
since $N(x)\chi_\Omega(x)\in L^1(\R^n)$. Moreover, inequality \eqref{reg2}
follows from the well known Hardy--Littlewood--Sobolev inequality for fractional integration, see \cite{Harboure} or \cite[Chapter~V]{Stein-Singular}.
Concerning \eqref{reg3}, notice that, from \eqref{eq.5} and \eqref{eq.17}, by H\"older's
inequality with $\frac{1}{p}+\frac{1}{q}=1$ and by applying the change of variables
$w=z/(\e^{1/2}t)$, we get {for $t<1$},
\begin{align*}
|e^{-(t\e^{1/2})(-\Delta_N)^{1/2}}f(x)|
&\leq C_{n,\Omega}\|f\|_{L^p(\Omega)}\left(\int_{\R^n}\frac{(\e^{1/2}t)^q}{((\e^{1/2}t)^2+|z|^2)^{q\frac{n+1}{2}}}\,dz
\right)^{1/q} \\
&=C_{\e,n,\Omega}\,t^{\frac{n-nq}{q}}\|f\|_{L^p(\Omega)}
\left(\int_{\R^n}\frac{1}{(1+|w|^2)^{q\frac{n+1}{2}}}\,dw
\right)^{1/q}\\
&=C_{\e,n,p,\Omega}\,t^{-n/p}\|f\|_{L^p(\Omega)}.
\end{align*}
{On the other hand, for $t\geq1$, H\"older's inequality readily
 gives $|e^{-(t\e^{1/2})(-\Delta_N)^{1/2}}f(x)|\leq
C\|f\|_{L^p(\Omega)}$}.
We can now estimate in the second to last identity of \eqref{u(x)}:
$$|u(x)|\leq C_{\e,n,p,\Omega}\,\|f\|_{L^p(\Omega)}\int_0^\infty e^{-t}{\min\{t,1\}}^{-n/p}\,dt=C_{\e,n,p,\Omega}\,\|f\|_{L^p(\Omega)},$$
which gives the conclusion (notice that $-n/p+1>0$).

\

\noindent $(2)$ The classical result by S. Campanato in \cite{Campanato}
establishes that $C^\alpha(\overline{\Omega})$ for $\alpha\in(0,1)$ coincides, with equivalent seminorms, with the space
$BMO^\alpha(\Omega)$. The fact that  $u\in BMO^\alpha(\Omega)$, $0\leq \alpha<1$, can be established as in the classical case
of the fractional integral on $\R^n$, see for example \cite{Harboure}. For completeness we provide the proof,
where the Neumann heat kernel plays a useful role again.
Let $f$ be as in the hypothesis. By $(1)$
it follows that $u\in L^p(\Omega)$, hence $u$ is locally integrable.
Let $x_0\in\Omega$ and let $B$ be a ball centered at $x_0$ of radius $r_B$. We decompose $f$ as $f=f_1+f_2$, where
$f_1=f\chi_{(2B)\cap\Omega}$ and $2B$ is the ball centered at $x_0$ with radius $2r_B$.
By \eqref{u(x)} we get $u(x)=u_1(x)+u_2(x)$, where $u_i$ corresponds to the integral of $f_i(y)$
against $L(x,y)$, $i=1,2$.
We denote by $(u)_D$ the integral mean of $u$ over a set $D$. We can write
\begin{align}
\frac{1}{|B\cap\Omega|}&\int_{B\cap\Omega}|u(x)-(u)_{B\cap\Omega}|\,dx\nonumber\\
&\leq \frac{2}{|B\cap\Omega|}\int_{B\cap\Omega}|u_1(x)|\,dx+\frac{1}{|B\cap\Omega|}
\int_{B\cap\Omega}|u_2(x)-(u_2)_{B\cap\Omega}|\,dx=:I+II.\label{BMO1}
\end{align}
Let us choose $1<\gamma<n$ and $q=\frac{n\gamma}{n-\gamma}$. By assertion $(a_1)$
and H\"older's inequality with $r=p/\gamma$ and $r'=\frac{p}{p-\gamma}$,
\begin{align}
I &\leq \left(\frac{2}{|B\cap\Omega|}\int_{B\cap\Omega}|u_1(x)|^q\right)^{1/q}
\leq \frac{C}{|B\cap\Omega|^{1/q}}\|f_1\|_{L^\gamma(\Omega)}\nonumber\\
&\leq \frac{C}{|B\cap\Omega|^{\frac{n-\gamma}{n\gamma}}}
\|f\|_{L^p(B\cap\Omega)}|B\cap\Omega|^{\frac{p-\gamma}{p\gamma}}
= C\|f\|_{L^p(\Omega)}|B\cap\Omega|^{\frac{1}{n}-\frac{1}{p}}\nonumber\\
&= C\|f\|_{L^p(\Omega)}|B\cap\Omega|^{\alpha/n},\label{BMO2}
\end{align}
with $C$ as in $(a_1)$ and $\alpha=1-n/p$. On the other hand,
\begin{equation}II\leq \frac{1}{|B\cap\Omega|^2}
\int_{B\cap\Omega}\int_{B\cap\Omega}\int_{(2B)^c\cap\Omega}|f_2(z)||L(x,z)-L(y,z)|\,dz\,dy\,dx.\label{BMO3}\end{equation}
Since $x,y\in B$ and $z\in(2B)^c$, by the mean value theorem applied in the definition of $L$
(use \eqref{Wang-Yan} into the second identity of \eqref{Poisson kernel}) it follows that
$$|L(x,z)-L(y,z)|\leq C\frac{\e^{-1/2}r_B}{|x_0-z|^{n}}.$$
Thus H\"older's inequality yields
\begin{align*}
II &\leq Cr_B\int_{(2B)^c\cap\Omega}\frac{|f_2(z)|}{|x_0-z|^{n}}\,dz\leq Cr_B\|f\|_{L^p(\Omega)}
\left(\int_{(2B)^c}\frac{1}{|x_0-z|^{\frac{np}{p-1}}}\,dz\right)^{\frac{p-1}{p}}=C\|f\|_{L^p(\Omega)}r_B^{1-n/p}.
\end{align*}
Collecting \eqref{BMO1}, \eqref{BMO2} and \eqref{BMO3},
$$\frac{1}{|B\cap\Omega|}\int_{B\cap\Omega}|u(x)-(u)_{B\cap\Omega}|\,dx\leq C\|f\|_{L^p(\Omega)}|B\cap\Omega|^{\alpha/n},$$
so that $u\in BMO^\alpha(\Omega)$. Then using \eqref{reg3}, the estimates \eqref{reg4} and \eqref{reg5} follow.

Now take any $r\in[p,\infty)$ and let us prove that $u\in L^r(\Omega)$. If $p>n$ then
from \eqref{reg3} we clearly have $\|u\|_{L^{r}(\Omega)}\leq C_{\e,n,p,r,\Omega}\,\|f\|_{L^{p}(\Omega)}$. Assume now that $f\in L^n(\Omega)$. Therefore by \eqref{reg1} we get $u\in L^n(\Omega)$.
On the other hand, suppose that $r>n$. If we pick $p$ such that $r=\frac{np}{n-p}$ we have $p\in(n/2,n)$.
Then $f\in L^p(\Omega)$, so \eqref{reg2} gives $u\in L^{\frac{np}{n-p}}(\Omega)
=L^r(\Omega)$.

\

\noindent$(3)$ For any $\alpha\in(0,1)$, take $p>n$ such that $\alpha=1-n/p$ and use \eqref{reg4}.

\

\noindent$(4)$ If $f\in C^{0,\alpha}(\overline{\Omega})$, by property {(3)} we have that $u\in C^{0,\alpha}(\overline{\Omega})$.
 If $v=E^{\e}u$, then the rescaled function
\[
\widetilde{v}(x,y)=v(x,\e^{-1/2}y)
\]
solves the problem
$$\begin{cases}
\Delta_{x,y}\widetilde{v}=0,&\hbox{in}~\mathcal{C},\\
\partial_{\nu}\widetilde{v}=0,&\hbox{on}~\partial_{L}\mathcal{C},\\
-\widetilde{v}_y(x,0)=h(x),&\hbox{on}~\Omega,
\end{cases}$$
where
\[
h=\e^{-1/2}(f-u)\in C^{0,\alpha}(\overline{\Omega}).
\]
Consider the function
\[
w(x,y)=\int_{0}^{y}\widetilde{v}(x,t)\,dt,\quad y\geq0.
\]
Then we have $w(x,0)=0$, and
\[
\Delta_{x,y}w=\widetilde{v}_y+\int_{0}^{y}\Delta_{x}\widetilde{v}(x,t)\,dt.
\]
Thus by using the equation for $\widetilde{v}$ we have $(\Delta_{x,y} w)_{y}=0$ in $\mathcal{C}$.
This says that $\Delta_{x,y} w$ is constant as a function of $y$, so we can compute it
by taking its value at $\left\{y=0\right\}$. Observe that
$$\Delta_{x,y}w\big|_{y=0}=\widetilde{v}_y\big|_{y=0}=-h(x).$$
Hence $w$ solves
\begin{equation}\label{w equation}
\begin{cases}
-\Delta_{x,y}w  =h(x),&\hbox{in}~\mathcal{C},\\
\partial_{\nu} w=0,&\hbox{on}~\partial_{L}\mathcal{C},\\
w(x,0)=0&\hbox{on}~\overline{\Omega}.
\end{cases}
\end{equation}
Then we aim to study the boundary regularity of $w$. We take a point $(x_0,0)$ with $x_0\in\partial\Omega$. Without loss of generality we can assume that $x_0$ is the origin and that the exterior
normal to $\partial\Omega$ at $x_0$ is the vector $-e_n\in\R^n$.
 Since $\partial\Omega$ is smooth, we can describe it near $x_0$ as the graph of an $(n-1)$ variables differentiable
 map and then flatten it with a transformation $\Psi$. Let us call $z$ the new variables in the flat geometry, and let
 $\bar{w}(z,y)=w(x,y)$. Notice that when we flatten the boundary we are leaving the $y$ variable fixed. As in \cite[pp.~18--19]{Lin-Ni-Takagi}
 it can be verified that $\bar{w}(z,y)$ satisfies the following extension problem:
 $$\begin{cases}
  -L_z\bar{w}-\bar{w}_{yy}=\bar{h}(z),&\hbox{in}~(B_{\delta}\cap\{z_n>0\})\times(0,\infty),\\
  \partial_{\nu}\bar{w}=0,&\hbox{on}~(B_{\delta}\cap\{z_n=0\})\times[0,\infty),\\
  \bar{w}(z,0)=0,&\hbox{in}~B_{\delta}\cap\{z_n\geq0\},
 \end{cases}$$
for some small $\delta>0$, where $B_{\delta}=B_{\delta}(0)$ is the ball centered at $0$ of radius $\delta$. Here $L_z$ is a nondivergence form elliptic operator with
smooth coefficients. At this point, we could extend $\bar{w}$, the operator $L_z$ and the known term $\bar{h}$ to $B_{\delta}\times[0,\infty)$ by an even reflection to get another equation in nondivergence form posed in the whole ball $B_{\delta}$, and satisfied by the even reflection $\bar{w}_{ev}$ of $\bar{w}$. Then \cite[{Lemma~6.18}]{Gilbarg-Trudinger} gives that $\bar{w}_{ev}\in C^{2,\alpha}(B_{\delta}\times[0,\infty))$. This implies that $v\in C^{1,\alpha}({\overline{\mathcal{C}}})$.
\end{proof}

\section{Existence of {nonconstant} least energy positive regular solutions to the fractional Neumann semilinear problem}\label{Section4}

In this section we find {nonconstant}
 positive solutions $u\in H^{1/2}(\Omega)$ to the semilinear Neumann problem
\eqref{eq.8}--\eqref{nonlinearity}
and study regularity properties for small $\e$. Here we assume \eqref{eq.9},
that is, $p$ is strictly smaller than the \emph{critical Sobolev trace exponent} $(n+1)/(n-1)$.
As in the linear case, in general we understand \eqref{eq.8} as
$$(-\e\Delta_N)^{1/2}u+u=g(u),\quad\hbox{in}~H^{-1/2}(\Omega),$$
{where $g(t)=(t_{+})^{p}$, for all $t\in \R$}.
In order to define a solution, consider the semilinear extension problem
\begin{equation}\label{eq.10}
\begin{cases}
\e\Delta_xv+v_{yy}=0,&\hbox{in}~\mathcal{C},\\
\partial_{\nu} v=0,&\hbox{on}~\partial_{L}\mathcal{C},\\
-v_y(x,0)+v(x,0)=g(v(x,0)),&\hbox{in}~H^{-1/2}(\Omega).
\end{cases}
\end{equation}
For such a problem we have the following suitable definition of weak solution.

\begin{definition}\label{def:weak}
A function $v\in\mathsf{H}^\e(\mathcal{C})$ is a weak solution to \eqref{eq.10} if
for every $w\in\mathsf{H}^\e(\mathcal{C})$ we have
\begin{equation}
(v,w)_\e=\langle f^{\e}_v,T^\e w\rangle,\label{weakH}
\end{equation}
where $(\cdot,\cdot)_\e$ is the inner product in $\mathsf{H}^\e(\mathcal{C})$, see \eqref{eq:dos estrellas},
and $f_{v}^{\e}$ is the functional in $H^{-1/2}(\Omega)$ defined by
$$\langle f^{\e}_v,\varphi\rangle=\int_{\Omega}g(T^{\e} v)\varphi\,dx,\quad\hbox{for each}~\varphi
\in H^{1/2}(\Omega).$$
\end{definition}

Observe that $f_{v}^\e$ is indeed in $H^{-1/2}(\Omega)$ because, by H\"older's inequality and the boundedness of the trace operator $T^\e$ from $\mathsf{H}^\e(\mathcal{C})$ into $L^{2n/(n-1)}(\Omega)$,
$$\int_\Omega|g(T^{\e} v)|^{\frac{2n}{n+1}}\,dx\leq\int_\Omega|T^{\e} v|^{\frac{2n}{n+1}p}\,dx
\leq C\|T^{\e} v\|^{2np/(n+1)}_{L^{\frac{2n}{n-1}}(\Omega)},$$
for some constant $C$.

As in the linear case, according to what we explained in the Introduction, it is natural to give the following definition of weak solution to \eqref{eq.8}.

\begin{definition}
A function $u\in H^{1/2}(\Omega)$ is a weak solution to \eqref{eq.8} if $u=T^{\e}v$, where $v$
 solves \eqref{eq.10} {in the sense of Definition \ref{def:weak}}.
\end{definition}

We look for a {nonconstant}
weak solution $v$ to \eqref{eq.10} as a {nonconstant} critical
point over $\mathsf{H}^{\e}(\mathcal{C})$ of the functional $\mathcal{I}_{\e}$ (see \eqref{4 star}):
\begin{equation}\label{eq.11}
\mathcal{I}_{\e}(v)=\frac{1}{2}\|v\|^{2}_{\e}
-\int_{\Omega}G(T^{\e} v)\,dx,
\end{equation}
where $\|\cdot\|_\e$ is the norm in $\mathsf{H}^{\e}(\mathcal{C})$, see \eqref{eq:trece estrellas}, and
\[
G(t)=\int_{0}^{t}g(\xi)\,d\xi=\begin{cases}
\frac{1}{p+1}t^{p+1},&\hbox{if}~t\geq0,\\
0,&\hbox{if}~t\leq0.
\end{cases}
\]
The second term in the right hand side of \eqref{eq.11} is well defined because
of the fractional Sobolev embedding (notice that $2<p+1<2n/(n-1)$).
Then we have the following results, which can be seen as the fractional
version of \cite[Theorem 2]{Lin-Ni-Takagi}.

\begin{theorem}\label{TeoremaMountain}
There is at least one positive {nonconstant} solution $v_\e{\in C^{2,\alpha}(\mathcal{C})
\cap C^{1,\alpha}(\overline{\mathcal{C}})}$, {for $0<\alpha<1$}, to problem \eqref{eq.10},
{provided $\e>0$ is sufficiently small. In this case}
there exists a positive constant $C$, depending only on $p$ and $\Omega$, such that
 $$\mathcal{I}_{\e}(v_\e)\leq C\e^{n/2}.$$
\end{theorem}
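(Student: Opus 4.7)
\smallskip
\noindent\textbf{Proof plan.} The strategy is to realize $v_\e$ as a mountain pass critical point of $\mathcal{I}_\e$ on the Hilbert space $\mathsf{H}^\e(\mathcal{C})$, then bootstrap regularity through the results of Section~\ref{Section:lineal}. Concretely, I would verify the three ingredients of the Ambrosetti--Rabinowitz Mountain Pass Theorem for $\mathcal{I}_\e$: (i) the geometric conditions $\mathcal{I}_\e(0)=0$, $\mathcal{I}_\e(v)\geq\alpha>0$ for $\|v\|_\e=\rho$ small, and $\mathcal{I}_\e(t v_1)\to-\infty$ as $t\to\infty$ for some $v_1$ with $T^\e v_1>0$ on a set of positive measure; and (ii) the Palais--Smale condition. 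For (i), the trace inequality \eqref{traceeqcomp} together with $p+1<2n/(n-1)$ gives $\int_\Omega (T^\e v)_+^{p+1}\,dx\leq C(\e)\|v\|_\e^{p+1}$, so $\mathcal{I}_\e(v)\geq\tfrac12\|v\|_\e^2-C(\e)\|v\|_\e^{p+1}$ is positive on a small sphere since $p+1>2$; the large $t$ estimate is immediate. For (ii), the standard combination $\mathcal{I}_\e(v_n)-\tfrac{1}{p+1}\mathcal{I}_\e'(v_n)[v_n]=\tfrac12(1-\tfrac{2}{p+1})\|v_n\|_\e^2$ yields boundedness of Palais--Smale sequences, and strong convergence follows by extracting a weak limit in $\mathsf{H}^\e(\mathcal{C})$ and applying the compact trace embedding (Corollary~\ref{compactness of the trace}) to pass to the limit in the nonlinear term in $L^{p+1}(\Omega)$. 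This produces a critical point $v_\e\neq 0$ at the mountain pass level $c_\e>0$.

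\smallskip
\noindent\textbf{Energy bound.} The heart of the argument is the $\e^{n/2}$ estimate for $c_\e$. Fix a point $x_0\in\overline{\Omega}$ and a nonnegative cutoff $U\in C_c^\infty(\R^n)$ with $U\not\equiv 0$, and (after localizing so that the ball of radius $\sqrt{\e}$ around $x_0$ lies in $\Omega$ or is a standard half-ball in the flattened boundary case) set
\[
u_0(x)=U\!\left(\frac{x-x_0}{\sqrt{\e}}\right),\qquad v_0=E^\e(u_0).
\]
A direct change of variables gives $\|u_0\|_{L^2(\Omega)}^2\sim\e^{n/2}$ and $[u_0]_{H^{1/2}(\Omega)}^2\sim\e^{(n-1)/2}$, so by Theorem~\ref{thm:trace and H Omega}
\[
\|v_0\|_\e^2=\|(-\e\Delta_N)^{1/4}u_0\|_{L^2(\Omega)}^2+\|u_0\|_{L^2(\Omega)}^2\sim\e^{1/2}\cdot\e^{(n-1)/2}+\e^{n/2}\sim\e^{n/2},
\]
and likewise $\int_\Omega(T^\e v_0)_+^{p+1}\,dx=\int_\Omega u_0^{p+1}\,dx\sim\e^{n/2}$. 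Maximizing along the ray $t\mapsto\mathcal{I}_\e(tv_0)$ in the variable $t\geq0$, the optimum $t_\e$ satisfies $t_\e^{p-1}=\|v_0\|_\e^2/\int(T^\e v_0)_+^{p+1}\sim 1$, and therefore $c_\e\leq\max_{t\geq0}\mathcal{I}_\e(tv_0)\leq C\e^{n/2}$. Choosing a path from $0$ to $Tv_0$ (for $T$ large enough that $\mathcal{I}_\e(Tv_0)<0$) passing through this maximizer gives the desired upper bound on $c_\e$.

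\smallskip
\noindent\textbf{Positivity, nonconstancy, regularity.} Using $w=(v_\e)_-\in\mathsf{H}^\e(\mathcal{C})$ as a test function in \eqref{weakH} kills the right hand side (since $g\equiv 0$ on $(-\infty,0]$) and yields $\|(v_\e)_-\|_\e^2=0$, whence $v_\e\geq 0$; strict positivity of $u_\e=T^\e v_\e$ follows from Theorem~\ref{Harnack} applied with $c(x)=1-u_\e^{p-1}\in L^\infty$ once we know $u_\e\not\equiv 0$. To rule out constants, observe that the only nonnegative constant solution of \eqref{eq.8} is $u\equiv 1$, whose energy $\mathcal{I}_\e(1)=\tfrac{p-1}{2(p+1)}|\Omega|$ is independent of $\e$; since $c_\e\leq C\e^{n/2}\to 0$, for all sufficiently small $\e$ the inequality $c_\e<\mathcal{I}_\e(1)$ forces $v_\e$ to be nonconstant. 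For the regularity, we start from $u_\e\in H^{1/2}(\Omega)\hookrightarrow L^{2n/(n-1)}(\Omega)$, so $g(u_\e)=u_\e^p\in L^{2n/(p(n-1))}(\Omega)$; writing the equation as $(-\e\Delta_N)^{1/2}u_\e+u_\e=g(u_\e)$ and iterating parts (1)--(2) of Theorem~\ref{Thm:regularity} (each step gains integrability because $p<(n+1)/(n-1)$ forces the exponent to strictly increase until $L^\infty$ is reached), we conclude $u_\e\in L^\infty(\Omega)$; part (3) then gives $u_\e\in C^{0,\alpha}(\overline\Omega)$, and applying part (4) with Hölder data $g(u_\e)$ yields $u_\e\in C^{1,\alpha}(\overline\Omega)$. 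Standard interior Schauder estimates for the constant coefficient equation $\e\Delta_x v+v_{yy}=0$ upgrade $v_\e$ to $C^{2,\alpha}(\mathcal{C})$, while the boundary analysis already embedded in the proof of Theorem~\ref{Thm:regularity}(4) gives $v_\e\in C^{1,\alpha}(\overline{\mathcal{C}})$.

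\smallskip
\noindent\textbf{Main obstacle.} The delicate point is the test function calibration: one needs $\|v_0\|_\e^2$ and $\|T^\e v_0\|_{L^{p+1}}^{p+1}$ to scale at the same order $\e^{n/2}$ so that the maximizing $t_\e$ stays bounded away from $0$ and $\infty$. The spectral identification of Theorem~\ref{thm:trace and H Omega} makes this possible by reducing the computation to the explicit $H^{1/2}(\Omega)$ seminorm, where the substitution $x=x_0+\sqrt{\e}\xi$ produces exactly the factor $\e^{1/2}\cdot\e^{(n-1)/2}=\e^{n/2}$ that matches the $L^2$ and $L^{p+1}$ scales.
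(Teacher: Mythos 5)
Your proposal is correct and follows the paper's overall strategy (Ambrosetti--Rabinowitz Mountain Pass on $\mathcal{I}_\e$ over $\mathsf{H}^\e(\mathcal{C})$, an $\e^{n/2}$ upper bound on the mountain pass level via a localized competitor, nonconstancy by comparison with $\mathcal{I}_\e(1)$, positivity by testing with the negative part, and bootstrap regularity through Theorem~\ref{Thm:regularity}). The one genuinely different ingredient is your choice of test function for the energy bound: you take the $\e$--Neumann harmonic extension $v_0=E^\e(u_0)$ of a rescaled bump $u_0(x)=U((x-x_0)/\sqrt\e)$ and compute $\|v_0\|_\e^2$ via the equality case of Lemma~\ref{thm:traces} together with the $H^{1/2}$--identification of Theorem~\ref{thm:trace and H Omega}, so that the scaling $\e^{1/2}\cdot\e^{(n-1)/2}=\e^{n/2}$ falls out of a change of variables in the Gagliardo seminorm. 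The paper instead uses the explicit ansatz $\Phi(x,y)=e^{-y/2}\varphi(x)$ with $\varphi$ a tent of width $\sqrt\e$, which is not the harmonic extension but whose Dirichlet energy on the cylinder is computable by hand; it then imports the Lin--Ni--Takagi estimate directly. Your route is conceptually cleaner (it explains the $\e^{n/2}$ as exactly the calibration of the trace seminorm) but leans more heavily on the Section~2 machinery; the paper's is more elementary and closer to the classical argument. A second small difference: you obtain strict positivity of $u_\e$ from the Harnack inequality Theorem~\ref{Harnack} (with $c(x)=1-u_\e^{p-1}\in L^\infty$, available after the bootstrap), whereas the paper cites [PellacciMontef] for this step; both work. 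Two minor points worth making explicit in your writeup: $T^\e v_\e\not\equiv 0$ (needed before invoking Harnack) follows from $v_\e\neq 0$ by testing the weak formulation with $v_\e$ itself; and in the scaling computation one needs the lower bound $\int_\Omega u_0^{p+1}\gtrsim\e^{n/2}$ as well as the upper bound $\|v_0\|_\e^2\lesssim\e^{n/2}$ to conclude $t_\e\sim 1$ and hence $\max_t\mathcal{I}_\e(tv_0)\lesssim\e^{n/2}$.
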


By taking $u_{\e}=T^\e(v_{\e})$ above, we clearly have the following.

\begin{corollary}\label{Cor:u epsilon}
There exists at least one positive {nonconstant} solution {$u_\e\in C^{1,\alpha}(\overline{\Omega})$,
for $0<\alpha<1$,}
to problem \eqref{eq.8}, {provided $\e>0$ is sufficiently small}.
\end{corollary}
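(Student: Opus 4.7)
The plan is to obtain the extension $v_\e$ as a critical point of $\mathcal{I}_\e$ on $\mathsf{H}^\e(\mathcal{C})$ via the Ambrosetti--Rabinowitz Mountain Pass Lemma, following the philosophy of \cite{Lin-Ni-Takagi}. The subcritical assumption $p<(n+1)/(n-1)$ translates into $p+1<2n/(n-1)$, which by Corollary \ref{compactness of the trace} makes $T^\e:\mathsf{H}^\e(\mathcal{C})\to L^{p+1}(\Omega)$ compact; this is the compactness ingredient that drives the whole argument. Once $v_\e$ is produced, Corollary \ref{Cor:u epsilon} follows by setting $u_\e:=T^\e v_\e$.

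First I would check the Mountain Pass geometry. Using the trace embedding \eqref{traceeqcomp} and $p+1>2$, one has $\mathcal{I}_\e(v)\geq \alpha(\e)>0$ on a small sphere $\|v\|_\e=\rho$, while for any fixed nonnegative $v_0\in \mathsf{H}^\e(\mathcal{C})$ with $T^\e v_0\not\equiv 0$ the ray $t\mapsto \mathcal{I}_\e(tv_0)$ tends to $-\infty$ as $t\to \infty$. Next I would verify the Palais--Smale condition: the Ambrosetti--Rabinowitz identity $(p+1)G(t)=tg(t)$ for $t\geq 0$ forces any PS sequence to be bounded in $\mathsf{H}^\e(\mathcal{C})$; extracting a weak limit $v_k\rightharpoonup v_\e$, the compactness of $T^\e$ yields $T^\e v_k\to T^\e v_\e$ strongly in $L^{p+1}(\Omega)$, which is exactly what is needed to pass to the limit in the nonlinear term and upgrade weak to strong convergence in $\mathsf{H}^\e(\mathcal{C})$. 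The Mountain Pass Lemma then yields a critical point $v_\e$ at the min-max level
\[
c_\e=\inf_{\gamma\in\Gamma}\max_{s\in[0,1]}\mathcal{I}_\e(\gamma(s))>0.
\]

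The central and most delicate step is the sharp upper bound $c_\e\leq C\e^{n/2}$. I would pick an interior point $x_0\in\Omega$ and a nonzero nonnegative $\phi\in C_c^\infty(\overline{\R^n_+})$, and use the rescaled test function
\[
\Phi_\e(x,y):=\phi\bigl(\e^{-1/2}(x-x_0),y\bigr),
\]
which, for $\e$ small, has compact support inside $\mathcal{C}$ and therefore belongs to $H^1(\mathcal{C})\subset\mathsf{H}^\e(\mathcal{C})$. A change of variables $\xi=\e^{-1/2}(x-x_0)$ gives
\[
\|\Phi_\e\|_\e^2=A\,\e^{n/2},\qquad \int_\Omega(T^\e\Phi_\e)^{p+1}\,dx=B\,\e^{n/2},
\]
with $A,B>0$ depending only on $\phi$. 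Connecting $0$ to $T\Phi_\e$ by a straight path for $T$ sufficiently large and optimizing along the ray $t\mapsto \mathcal{I}_\e(t\Phi_\e)$,
\[
c_\e\leq\max_{t\geq 0}\mathcal{I}_\e(t\Phi_\e)=\frac{p-1}{2(p+1)}\cdot\frac{(A\,\e^{n/2})^{(p+1)/(p-1)}}{(B\,\e^{n/2})^{2/(p-1)}}=C\,\e^{n/2}.
\]
The algebraic identity $(p+1)/(p-1)-2/(p-1)=1$ is what makes the final $\e$-exponent exactly $n/2$; this is the step I expect to require the most careful bookkeeping, in particular to ensure that $\Phi_\e$ is genuinely admissible and that the rescaled integrals are finite and independent of $\e$.

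Finally, positivity, nonconstancy and regularity. Testing the weak formulation with $w=v_\e^-$ and using $g\equiv 0$ on $(-\infty,0]$ yields $\|v_\e^-\|_\e=0$, hence $v_\e\geq 0$. The only nonnegative constant solution is $u\equiv 1$ (from $u=u^p$), whose constant extension has $\mathcal{I}_\e(1)=(p-1)|\Omega|/(2(p+1))$, a positive number independent of $\e$; for $\e$ small the bound $c_\e\leq C\e^{n/2}$ lies strictly below this value, so $v_\e$ differs from both constants $0$ and $1$ and is therefore nonconstant. Regularity of $u_\e=T^\e v_\e$ is obtained by bootstrapping Theorem \ref{Thm:regularity}: starting from $u_\e\in L^{2n/(n-1)}(\Omega)$ via the trace Sobolev embedding and $f=u_\e^p$, iterative application of parts $(1)$--$(3)$ of Theorem \ref{Thm:regularity} raises the integrability until $f\in L^\infty(\Omega)$, whence part $(3)$ gives $u_\e\in C^{0,\alpha}(\overline{\Omega})$ and part $(4)$ gives $u_\e\in C^{1,\alpha}(\overline{\Omega})$. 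The boundary-flattening argument at the end of the proof of Theorem \ref{Thm:regularity} then lifts this to $v_\e\in C^{2,\alpha}(\mathcal{C})\cap C^{1,\alpha}(\overline{\mathcal{C}})$, and strict positivity $v_\e>0$ follows from Theorem \ref{Harnack} applied to $u_\e$ with $c(x)=1-u_\e^{p-1}$.
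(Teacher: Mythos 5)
Your proposal is correct and follows the paper's overall strategy: produce the extension $v_\e$ as a Mountain Pass critical point of $\mathcal{I}_\e$, establish the sharp bound $\mathcal{I}_\e(v_\e)\leq C\e^{n/2}$, conclude nonconstancy by comparing with $\mathcal{I}_\e(1)=\frac{p-1}{2(p+1)}|\Omega|$, and bootstrap regularity via Theorem \ref{Thm:regularity}. You do, however, take a genuinely different route in two sub-steps, both of which work. For the $\e^{n/2}$ upper bound, you use the rescaled smooth bump $\Phi_\e(x,y)=\phi(\e^{-1/2}(x-x_0),y)$, whose Jacobian immediately produces the factor $\e^{n/2}$ in both $\|\Phi_\e\|_\e^2$ and $\int_\Omega(T^\e\Phi_\e)^{p+1}$, so that the one-parameter maximization gives $C\e^{n/2}$ at once; the paper instead uses the Lin--Ni--Takagi cone $\varphi(x)=\e^{-n/2}(1-\e^{-1/2}|x|)_+$ tensorized with $e^{-y/2}$ in the vertical variable, which yields the same exponent but requires tracking several $\e$-powers (there $\|\Phi\|_\e^2\sim\e^{-n/2}$ and $\int(T^\e\Phi)^{p+1}\sim\e^{-np/2}$, combining to $\e^{n/2}$ only after the maximization). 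Your choice makes the scaling more transparent, whereas the paper's choice stays closer to \cite[Lemma 2.4]{Lin-Ni-Takagi}. The second difference is strict positivity: you invoke the Harnack inequality (Theorem \ref{Harnack}) with $c(x)=1-u_\e^{p-1}$, which is bounded once $u_\e\in L^\infty$; the paper instead cites \cite[Proposition~7, Remark~5]{PellacciMontef}. Your argument has the merit of being self-contained within the paper's toolbox, although you should note explicitly that $u_\e\not\equiv 0$ (which follows because $u_\e\equiv 0$ would force $v_\e\equiv 0$ by uniqueness of the harmonic extension, contradicting the Mountain Pass critical level being positive) so that the Harnack chaining actually yields $u_\e>0$ on all of $\overline\Omega$. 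One minor point to add for completeness: your test function $\phi\in C_c^\infty(\overline{\R^n_+})$ should be chosen with $\phi(\cdot,0)\not\equiv 0$ so that $B>0$.
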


The rest of this section is devoted to the proof of Theorem \ref{TeoremaMountain}.
We split the proof in several steps.

\medskip

\noindent $\bullet$ {\sl Proof of existence of a {nonconstant} critical point {for $\e>0$ sufficiently small}.}
We apply the Mountain Pass Lemma by Ambrosetti--Rabinowitz
\cite[Theorem 2.1]{AMBRRAB}, see also \cite{Lin-Ni-Takagi} and
\cite[Chapter~8]{Evans}, to find a {nonconstant} critical point $v_\e$ of the functional $\mathcal{I}_\e$, over
the Hilbert space $\mathsf{H}^\e(\mathcal{C})$. Observe that
$\mathcal{I}_\e(0)=0$.
The application of the Mountain Pass Lemma will give us a nonconstant nonzero solution $v_\e$. To this aim
we check several points.

\smallskip

\noindent\texttt{1.} \emph{The functional $\mathcal{I}_\e$ is {in} $C^1(\mathsf{H}^\e(\mathcal{C});\R)$ and
$\mathcal{I}'_\e$ is Lipschitz continuous on bounded sets of $\mathsf{H}^\e(\mathcal{C})$.}
Indeed, we have
$$\mathcal{I}_\e(v)=\frac{1}{2}\|v\|_{\e}^2-\int_{\Omega}G(T^{\e} v)\,dx=
\mathcal{I}_{\e,1}(v)-\mathcal{I}_{\e,2}(v),$$
where
\[
\mathcal{I}_{\e,1}(v):=\frac{1}{2}\|v\|_{\e}^2
\]
and
\[
\mathcal{I}_{\e,2}(v):=\int_{\Omega}G(T^{\e} v)\,dx.
\]
It is standard to see (see for example \cite{Evans})  that $\mathcal{I}_{\e,1}$ satisfies the conditions in \texttt{1} above and, moreover,
$\mathcal{I}_{\e,1}'(v)=v$. We must now analyze $\mathcal{I}_{\e,2}$. To this end let us consider the
operator $\mathcal{K}$ that maps every $f\in H^{-1/2}(\Omega)$ into the weak solution $v\in \mathsf{H}^\e(\C)$
of the linear problem \eqref{eq.6}. Then we have that
\[
\mathcal{K}:H^{-1/2}(\Omega)\rightarrow {\mathsf{H}^{\e}(\C)}
\]
is an isometry. We can check
that for any $v\in \mathsf{H}^\e(\C)$, $\mathcal{I}_{\e,2}'(v)=\mathcal{K}[g(T^{\e} v)]$.
In fact, using parallel arguments as in \cite[Chapter~8]{Evans}, we have that for any
$w\in \mathsf{H}^\e(\C)$
\begin{align*}
\mathcal{I}_{\e,2}(w) &= \int_\Omega G(T^{\e} v)\,dx+\int_\Omega g(T^{\e} v)
\big(T^{\e}(w-v)\big)dx+R \\
&= \mathcal{I}_{\e,2}(v)+\left(\mathcal{K}[g(T^{\e}v)],w-v\right)_{\e}+R,
\end{align*}
where we used \eqref{eq.33} and $R$ is some remainder. As in \cite[p.~484]{Evans} again, by
applying the trace
inequality \eqref{traceeqcomp} we can conclude that $R=o(\|w-v\|_{\e})$.
The Lipschitz continuity of $\mathcal{I}^{\prime}_{{\e,2}}$ on bounded sets follows similarly.
Thus $\mathcal{I}_{\e,2}$ satisfies condition \texttt{1} above. Therefore
$$\mathcal{I}_\e'(v)=v-\mathcal{K}[g(T^{\e} v)].$$

\noindent\texttt{2.} \emph{The functional $\mathcal{I}_\e$ satisfies the Palais--Smale condition.}
We have to show that if we choose a sequence $\{v_k\}_{k\in\N}$ in $\mathsf{H}^\e(\C)$
such that $\{\mathcal{I}_\e(v_k)\}_{k\in\N}$ is bounded
and
\begin{equation}
\mathcal{I}_\e'(v_k)=v_k-\mathcal{K}[g(T^{\e} v_k)]\to 0\label{convk}
\end{equation} as $k\to\infty$
in $\mathsf{H}^\e(\mathcal{C})$, then $\{v_k\}_{k\in\N}$ is precompact in $\mathsf{H}^\e(\C)$.
To that end, let {$\eta>0$}. We have
$$|(\mathcal{I}_\e'(v_k),w)_{{\e}}|\leq {\eta}\|w\|_{ {\e}},$$
for $k$ large enough. If we choose $w=v_k$ then, by \eqref{eq.33},
$$\left|\|v_k\|_{\e}^2-\int_\Omega g(T^{\e} v_k)T^{\e}v_k\,dx\right|
\leq  {\eta}\|v_k\|_{ {\e}},$$
for $k$ large. In particular, for $ {\eta}=1$,
$$\int_\Omega g(T^{\e} v_k) {T^\e}v_k\,dx\leq \|v_k\|^2_{\e}+\|v_k\|_{\e}.$$
 {Since $|\mathcal{I}_\e(v_k)|\leq C$ for all $k$ and $g(t)=t^p$ for $t>0$ and $p>1$, we deduce
$$\|v_k\|_\e^2 \leq C+2\int_\Omega G(T^\e v_k)\,dx \leq C+\frac{2}{p+1}(\|v_k\|_\e^2+
\|v_k\|_\e).$$
Now $2/(p+1)<1$, therefore $\{v_k\}_{k=1}^\infty$}
 is bounded in $\mathsf{H}^\e(\C)$. Then, up to subsequences, we have
\[
v_{k}\rightharpoonup v,\quad \hbox{weakly in}~\mathsf{H}^{\e}(\mathcal{C}).
\]
By Corollary \ref{compactness of the trace} we have that
\[
T^{\e}v_{k}\rightarrow T^{\e}v,\quad\hbox{strongly in}~L^{p+1}(\Omega).
\]
Notice again here that $p+1<2n/(n-1)$.
Then we find $g(T^{\e}v_{k})\rightarrow g(T^{\e}v)$ in $H^{-1/2}(\Omega)$, thus $\mathcal{K}[g(T^{\e} v_k)]\rightarrow
\mathcal{K}[g(T^{\e} v)]$ strongly in $\mathsf{H}^{\e}(\mathcal{C})$. But then by \eqref{convk} we conclude that
\[
v_{k}=v_{k}-\mathcal{K}[g(T^{\e} v_k)]+\mathcal{K}[g(T^{\e} v_k)]\rightarrow\mathcal{K}[g(T^{\e} v)]
\]
in $\mathsf{H}^{\e}(\mathcal{C})$. Thus $v=\mathcal{K}[g(T^{\e} v)]$ and condition \texttt{2} holds.
\smallskip

\noindent\texttt{3.} \emph{The functional $\mathcal{I}_{\e}$ satisfies the following condition: there is some $\rho>0$ such that $\mathcal{I}_{\e}(v)>0$ for
$0<\|v\|_{\e}<\rho$, and $\mathcal{I}_{\e}(v)\geq\beta>0$ for some $\beta>0$ and $\|v\|_{\e}=\rho$.}
This follows from \cite[Lemma 3.3]{AMBRRAB}. Indeed, it suffices to show that
\[
\int_{\Omega}G(T^{\e} v)\,dx=o(\|v\|_{\e}^{2}),
\]
which is readily true because the trace Sobolev inequality (recall that $1<p<(n+1)/(n-1))$) yields
\[
\left|\int_{\Omega}G(T^{\e} v)\,dx\right|\leq C \|v\|_{\e}^{p+1}.
\]
\smallskip
\noindent\texttt{4.} \emph{For a sufficiently small $\e>0$, there is a nonegative function $\Phi\in
\mathsf{H}^{\e}(\mathcal{C})$ and positive constants $t_{0}$, $C_{0}$ such that
\[
\mathcal{I_{\e}}(t_{0}\Phi)=0,
\]
and
\[
\mathcal{I_{\e}}(t\Phi)\leq C_{0} \e^{n/2},\quad\hbox{for}~t\in[0,t_{0}].
\]}
Observe that $\|t_0\Phi\|_\e>\rho$, where $\rho$ is as in \texttt{3} above.
Indeed, we can choose $\Phi$ as
\[
\Phi(x,y)=e^{-y/2}\varphi(x),
\]
where $\varphi$ is as in \cite[p.~10]{Lin-Ni-Takagi}
\begin{equation*}
\varphi(x)=
\begin{cases}
 \e^{-n/2}(1-\e^{-1/2}|x|),&\hbox{if}~|x|<\sqrt{\e},\\
 0 &\hbox{if}~|x|\geq\sqrt{\e}.
\end{cases}
\end{equation*}
We can also suppose that
$0\in \Omega$ and that
$\e$ is sufficiently small so that $\Phi\in\mathsf{H}^{\e}(\mathcal{C})$. Of course we have
\[
\iint_{\mathcal{C}}|\nabla_{x}\Phi|^2\,dx\,dy=\int_{\Omega}|\nabla\varphi|^2\,dx,\quad
\iint_{\mathcal{C}}|\Phi_{y}|^2\,dx\,dy=\frac{1}{4}\int_{\Omega}\varphi^2\,dx,
\]
and clearly $\tr_{\Omega}\Phi(x)=\varphi(x)$, so that
\[
\int_{\Omega}|T^{\e}\Phi|^2 \,dx =\int_{\Omega}\varphi^2\,dx.
\]
Then, by following the same arguments as in \cite[Lemma 2.4]{Lin-Ni-Takagi} if we set
\[
\mathsf{g}(t)=\mathcal{I}_{\e}(t\Phi),\quad\hbox{for}~t\geq0,
\]
it is seen that there exist $t_{1},\,t_2$ with $0<t_1<t_2$ such that $\mathsf{g}^{\prime}(t)<0$ if $t>t_1$ and $\mathsf{g}(t)<0$ if $t>t_2$. The details of this proof are left to the interested reader. Now property
\texttt{3}
implies $\mathsf{g}(t)>0$ for small $t$, thus there is $t_0$ such that $\mathsf{g}(t_{0})=0$, that is
\[
\mathcal{I}_{\e}(t_{0}\Phi)=0,
\]
for small $\e>0$. Thus  {we get that} $\|t_0\Phi\|_\e>\rho$, where $\rho$ is as in \texttt{3} above. Moreover, as in \cite[p.~12]{Lin-Ni-Takagi}, we have
\begin{equation}
\max_{t\geq0} \mathsf{g}(t)=\max_{0\leq t\leq t_1}\mathsf{g}(t)\leq C_{0} \e^{n/2}\label{eq.202}
\end{equation}
for some constant $C_{0}>0$. Hence \texttt{4} is proved.

\smallskip

\noindent\texttt{5.} \emph{Conclusion.} We are in position to apply the Mountain Pass Lemma.
Set $E=\mathsf{H}^{\e}(\mathcal{C})$, $e=t_{0}\Phi$ and
\[
\Gamma=\left\{\gamma\in C([0,1];E):\gamma(0)=0,\gamma(1)=e\right\}.
\]
Since conditions \texttt{1}--\texttt{4} are satisfied, the Mountain Pass Lemma implies that the number
\[
c=\min_{\gamma\in\Gamma}\max_{t\in [0,1]}\mathcal{I}_{\e}(\gamma(t))
\]
is a critical value of $\mathcal{I}_{\e}$ in $\mathsf{H}^{\e}(\mathcal{C})$. Thus there exists $v_{\e}$
in $\mathsf{H}^{\e}(\mathcal{C})$ such that
\[
\mathcal{I}^{\prime}_{\e}(v_{\e})=0
\]
and, by \eqref{eq.202},
\[
\mathcal{I}_{\e}(v_{\e})=c\leq \max_{[0,t_{0}]}\mathcal{I}_{\e}(t\Phi)\leq C_{0} \e^{n/2}.
\]
It remains to prove that $v_{\e}$ is nonconstant. Let us argue by contradiction.
Suppose that $v_\e=c_1$, for some real number $c_{1}$. Then
$$\mathcal{I}_{\e}(v_{\e})=\left(\frac{1}{2}c^2_{1}-G(c_1)\right)|\Omega|.$$
Since $v_{\e}$ is a critical point, we have $\mathcal{I}_\e'(v_e)=0$,
which by using the equation implies that $g(c_1)=c_1$, and therefore $c_1=1$. So,
$$\mathcal{I}_{\e}(v_{\e})=\left(\frac{1}{2}-\frac{1}{p+1}\right)|\Omega|.$$
This is in contradiction with the inequality $\mathcal{I}_{\e}(v_{\e})\leq C \e^{n/2}$,
that holds for small $\e$.

To summarize, we conclude that for small $\e$ the functional $\mathcal{I}_{\e}(v_{\e})$ has at least one nonzero nonconstant critical point.\qed

\medskip

\noindent $\bullet$ {\sl Proof of smoothness for each $\e>0$ small.} We prove now that the nonconstant minimizers $v_\e$ we found in the first part of the proof are actually  {classical solutions}.
To this aim, let $u\in H^{1/2}(\Omega)$ be a solution to
 $$(-\e\Delta_N)^{1/2}u+u=g(u),$$
 for
 $$1<p<\frac{n+1}{n-1},$$
 and let $v$ be its $\e$--Neumann extension, solving problem \eqref{eq.10}. If we know that $g(u)\in L^\infty(\Omega)$, then Theorem \ref{Thm:regularity} parts (3)--(4) imply that $u\in C^{1,\alpha}(\overline{\Omega})$.
  {Arguing as in the proof of Theorem \ref{Thm:regularity} part (4),
 we can see that $v\in C^{2,\alpha}(\mathcal{C})\cap C^{1,\alpha}(\overline{\mathcal{C}})$.
 Indeed, take $h=\e^{-1/2}(g(u)-u)\in C^{1,\alpha}(\overline{\Omega})$ and notice that,
 by interior Schauder estimates, the solution $w$ to \eqref{w equation} is in $C^{3,\alpha}(\mathcal{C})$.}
 Hence each $v_\e$ is a classical solution.  {Therefore} we are reduced to prove that $u$ is bounded.
 We use a bootstrap argument in $u$ with the aid of Theorem \ref{Thm:regularity}.
 To that end, recall the embedding $H^{1/2}(\Omega)\subset L^{\frac{2n}{n-1}}(\Omega)$.
 This gives that
 $$u\in L^q(\Omega),\quad\hbox{for}~q=\frac{2n}{n-1}>2.$$
 Then, since $n\geq2$, it is clear that $q>p$.

 Let us suppose first that $n\geq3$. Then, since $p>1$, we have $p<q<np$.
  Observe that, by the condition on $p$,
  $$n(p-1)<n\left(\frac{n+1}{n-1}-1\right)=\frac{2n}{n-1}=q.$$
  Then
  $$\theta:=\frac{n}{np-q}>1.$$
  Suppose now that $u\in L^r(\Omega)$ for some $q\leq r<np$.
  Then for the nonlinear term we have $g(u)\in L^{r/p}(\Omega)$. Since $r/p\geq q/p>1$,
  and $r/p<n$, we have that $(-\e\Delta_N)^{1/2}u+u\in L^{\gamma}(\Omega)$ ($\gamma=r/p$)
  with $1<\gamma<n$. Hence, from  {\eqref{reg2} in}
  Theorem \ref{Thm:regularity}, we find $u\in L^{\theta r}(\Omega)$
  (observe that $\theta r$ is certainly bigger than $\gamma$ and smaller
  than $\frac{n\gamma}{n-\gamma}$, the latter because $q\leq r$). Now we iterate this procedure
  in the following way. Choose a positive integer $k$ for which $\theta^k q< np<\theta^{k+1}q$.
  Then repeat the same reasoning as above but choosing $r=\theta^j q$, for $j=0,1,\ldots,k$.
  At the end one deduces that $u\in L^{\theta^{k+1}q}(\Omega)$. The fact that
  the nonlinear term $g(u)$ is in $L^{\theta^{k+1}q/p}(\Omega)$ and that such exponent is
  strictly bigger than $n$ imply, by  {\eqref{reg3} in} Theorem \ref{Thm:regularity}, that $u$ is bounded.\\
 Next we assume that $n=2$, so $q=4$ and $1<p<3$. We consider now three possible cases.\smallskip\\
 \textbf{Case I. $p<2$.} Then $g(u)\in L^{4/p}(\Omega)$ and $4/p>2=n$. This
  says, by Theorem \ref{Thm:regularity}$(c_2)$, that $(-\e\Delta_N)^{1/2}u+u\in L^{r}(\Omega)$ for some $r>n$. By \eqref{reg3} we obtain that
  $u\in L^\infty(\Omega)$.\\
 \textbf{Case II. $p=2$.} For the right hand side we have $|u|^{p-1}u=u^2\in L^2(\Omega)$.
  Then, by Theorem \ref{Thm:regularity}$(c_2)$, $u$ is in $L^r(\Omega)$ for all $r\geq 2$, so \eqref{reg3}
  gives $u\in L^\infty(\Omega)$.\smallskip\\
  \textbf{Case III. $2<p<3$.} Here we have $p<q=4<2p=np$, so we can apply the iteration
  as in the case $n\geq3$ above to get higher integrability for the right hand side that still ensures
  the boundedness of $u$.\qed

  \medskip

\noindent $\bullet$ {\sl Proof of positivity for each $\e>0$ small}.
From the bootstrap argument we have proved that  {$v_{\e}\in
C^{2,\alpha}(\mathcal{C})\cap C^{1,\alpha}(\overline{\mathcal{C}})$}
for any $0<\alpha<1$, where $v_{\e}$ is a nonconstant critical
point of the functional $\mathcal{I}_{\e}$ in \eqref{eq.11}. In order to prove that $v_\e>0$ everywhere in $\overline{\mathcal{C}}$,  {let us choose $v^{-}_\e$ in the weak formulation \eqref{weakH} of problem \eqref{eq.10}. Then we have
\[
\iint_{\mathcal{C}}\big(\e|\nabla_{x}v_{\e}^{-}|^{2}+|(v_{\e}^{-})_{y}|^{2}\big)\,dx\,dy
+\int_{\Omega}|u_{\e}^{-}|^{2}\,dx=-\int_{\Omega}(u^{+}_{\e})^{p}\,u_{\e}^{-}
\,dx=0.
\]
Thus $v_\e\geq0$ in $\mathcal{C}$ and $u_\e\geq0$ in $\Omega$.
Then it suffices to use \cite[Proposition~7]{PellacciMontef} and
\cite[Remark~5]{PellacciMontef} to get
 $u_\e>0$ in $\overline{\Omega}$ and $v_\e>0$ in $\overline{\mathcal{C}}$}.\qed

\section{Boundedness, spike shape of solutions and nonexistence for large $\e$}\label{Section5}

\subsection{Uniform boundedness for small $\e$}

We have shown so far that each solution $u_\e$ to problem \eqref{eq.8} is bounded for small $\e$.
The next result proves that the family of solutions $\left\{u_{\e}\right\}_{\e>0}$ (for small $\e$) is,
in fact, equibounded.

\begin{theorem}\label{Moser}
Let $v_{\e},u_\e$ be the  {nonconstant} smooth positive solutions obtained by Theorem \ref{TeoremaMountain}
and Corollary \ref{Cor:u epsilon}. Then
\begin{equation}
\e\iint_{\mathcal{C}}|\nabla_{x} v_{\e}|^{2}\,dx\,dy+\iint_{\mathcal{C}} |(v_{\e})_{y}|^{2}\,dx\,dy+\int_{\Omega}|u_{\e}|^{2}\,dx=\int_{\Omega}|u_{\e}|^{p+1}\,dx\leq (2^{-1}-\theta)^{-1}C\e^{n/2},\label{firstestim}
\end{equation}
where $C$ is the constant of Theorem \ref{TeoremaMountain} and $\theta=1/(p+1)$.
In particular, $u_\e\to0$ in measure in $\Omega$ as $\e\to0^+$.
Moreover, there is a constant $C_{1}>0$, depending on $\Omega$ and $C$, such that
\begin{equation}
\sup_{\Omega}u_{\e}\leq C_{1}.\label{equibound}
\end{equation}
\end{theorem}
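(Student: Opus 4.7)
The first identity in \eqref{firstestim} is obtained by choosing $\phi=v_\e$ as test function in the weak formulation \eqref{weakH} of the semilinear extension problem \eqref{eq.10}, which gives
$$\iint_{\mathcal{C}}\big(\e|\nabla_x v_\e|^2+(v_\e)_y^2\big)\,dx\,dy+\int_\Omega u_\e^2\,dx=\int_\Omega u_\e^{p+1}\,dx.$$
Substituting this into the definition \eqref{eq.11} of $\mathcal{I}_\e$, one sees at once that $\mathcal{I}_\e(v_\e)=(2^{-1}-\theta)\int_\Omega u_\e^{p+1}\,dx$, so combining with the Mountain Pass bound $\mathcal{I}_\e(v_\e)\leq C\e^{n/2}$ from Theorem \ref{TeoremaMountain} yields the second inequality in \eqref{firstestim}. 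Convergence of $u_\e$ to $0$ in measure follows from Chebyshev's inequality: for every $\delta>0$,
$$|\{u_\e>\delta\}|\leq \delta^{-(p+1)}\int_\Omega u_\e^{p+1}\,dx\leq \delta^{-(p+1)}(2^{-1}-\theta)^{-1}C\,\e^{n/2}\longrightarrow 0,$$
as $\e\to 0^+$.

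For the uniform $L^\infty$ estimate \eqref{equibound} the plan is a Moser iteration on the extension problem. Fix $\beta\geq 0$ and test the weak formulation \eqref{weakH} against $\phi=v_\e^{2\beta+1}$ (after a standard bounded truncation justified by the smoothness of $v_\e$ established in Section \ref{Section4}). Setting $W:=v_\e^{\beta+1}$ and using $|\nabla_x W|^2=(\beta+1)^2 v_\e^{2\beta}|\nabla_x v_\e|^2$ and the analogous identity for $W_y$, this yields the differential identity
$$\frac{2\beta+1}{(\beta+1)^2}\iint_{\mathcal{C}}\big(\e|\nabla_x W|^2+W_y^2\big)\,dx\,dy+\int_\Omega u_\e^{2(\beta+1)}\,dx=\int_\Omega u_\e^{p+2\beta+1}\,dx.$$
Applying the trace embedding \eqref{traceeqcomp} to $W\in\mathsf{H}^\e(\mathcal{C})$ gives
$$\Big(\int_\Omega u_\e^{2(\beta+1)n/(n-1)}\,dx\Big)^{(n-1)/n}=\|T^\e W\|_{L^{2n/(n-1)}(\Omega)}^2\leq C_0^2\mathsf{C}(\e)^{-2}\|W\|_\e^2.$$
Since $p+1<2n/(n-1)$, H\"older's inequality with exponents $(n,n/(n-1))$ controls the right-hand side of the identity via
$$\int_\Omega u_\e^{p+2\beta+1}\,dx\leq\|u_\e\|_{L^{n(p-1)}(\Omega)}^{p-1}\Big(\int_\Omega u_\e^{2(\beta+1)n/(n-1)}\,dx\Big)^{(n-1)/n}.$$
Setting $s_k:=(p+1)\big(n/(n-1)\big)^k$ and iterating from the initial bound $\|u_\e\|_{L^{p+1}(\Omega)}\leq (C\e^{n/2})^{1/(p+1)}$ established in the first step, one produces an inequality of the form $\|u_\e\|_{L^{s_{k+1}}}\leq \Lambda_k \|u_\e\|_{L^{s_k}}$, whose telescoped product $\prod_k\Lambda_k^{1/s_k}$ is finite because $s_k$ grows geometrically while $\Lambda_k$ at most polynomially.

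The main technical obstacle is to show that the iteration constants $\Lambda_k$ are controlled \emph{uniformly in $\e$} for small $\e$. The trace inequality \eqref{traceeqcomp} carries the factor $\mathsf{C}(\e)^{-2}\sim\e^{-1/2}$ which \emph{a priori} blows up; however the subcritical condition $p<(n+1)/(n-1)$ guarantees $n(p-1)<p+1$, so that by H\"older interpolation on $\Omega$ and the first step one has $\|u_\e\|_{L^{n(p-1)}(\Omega)}^{p-1}\lesssim \e^{n(p-1)/(2(p+1))}$. This supplies a compensating positive power of $\e$ that lets the absorption step in the iteration close uniformly for $\e$ sufficiently small and produces the final constant $C_1$ depending only on $\Omega$ and the energy constant $C$ of Theorem \ref{TeoremaMountain}.
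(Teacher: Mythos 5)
The first part of your proof (the energy identity from testing with $v_\e$, the bound $\mathcal{I}_\e(v_\e)=(2^{-1}-\theta)\int u_\e^{p+1}\,dx\leq C\e^{n/2}$, and the convergence to zero in measure via Chebyshev) is exactly the paper's argument and is correct. The Moser iteration, however, has two genuine gaps.

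First, the H\"older split you propose puts the same quantity on both sides and therefore does not iterate. Indeed, testing with $\phi=v_\e^{2\beta+1}$ and using the trace embedding \eqref{traceeqcomp} on $W=v_\e^{\beta+1}$ gives
\[
\left(\int_\Omega u_\e^{2(\beta+1)\frac{n}{n-1}}\,dx\right)^{\frac{n-1}{n}}\leq C_0^2\mathsf{C}(\e)^{-2}\,\frac{(\beta+1)^2}{2\beta+1}\int_\Omega u_\e^{p+2\beta+1}\,dx,
\]
and your H\"older bound on the right produces exactly the expression $\left(\int_\Omega u_\e^{2(\beta+1)\frac{n}{n-1}}\,dx\right)^{\frac{n-1}{n}}$ again. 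The resulting inequality is either vacuous or (after ``absorption'') would force $u_\e\equiv 0$; it gives no relation between $\|u_\e\|_{L^{s_{k+1}}}$ and $\|u_\e\|_{L^{s_k}}$. To close a genuine iteration of the form $\|u_\e\|_{L^{s_{k+1}}}\leq\Lambda_k\|u_\e\|_{L^{s_k}}$ you would need the exponent in the nonlinear term $\int u_\e^{p+2\beta+1}$ to sit strictly between $s_k$ and $s_{k+1}$ together with an interpolation--absorption argument, and you have not done this.

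Second, the compensation you invoke does not actually cancel the $\e^{-1/2}$. The subcritical condition $p<(n+1)/(n-1)$ is \emph{equivalent} to $n(p-1)<p+1$, hence
\[
\frac{n(p-1)}{2(p+1)}<\frac{1}{2},
\]
so $\|u_\e\|_{L^{n(p-1)}(\Omega)}^{p-1}\lesssim\e^{n(p-1)/(2(p+1))}$ carries a power of $\e$ that is \emph{strictly smaller} than $1/2$. The total power of $\e$ in your iteration constant remains negative, and it still blows up as $\e\to0^+$.

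The paper's Moser iteration avoids both problems by dispensing with the H\"older split altogether and tracking the $\e^{n/2}$ factor \emph{through} the iteration. With the recursion $p-1+2s_{j+1}=\nu s_j$, $\nu=2n/(n-1)$, the basic inequality $\left(\int u^{\nu s}\right)^{2/\nu}\leq\e^{-1/2}s\gamma^2\int u^{p-1+2s}$ plus the induction hypothesis $\int u^{p-1+2s_j}\leq M_j\e^{n/2}$ yield
\[
\int u^{p-1+2s_{j+1}}\leq\left(\gamma^2 s_j M_j\right)^{\nu/2}\e^{(n-1)\nu/4}=M_{j+1}\,\e^{n/2},
\]
because $(n-1)\nu/4=n/2$ exactly. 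Thus the $\e^{n/2}$ is \emph{invariant} under the iteration, the loss $\e^{-1/2}$ is precisely cancelled, and the only growth is in the $M_j$'s, which satisfy $M_j\leq e^{ms_{j-1}}$. Taking the $1/(\nu s_{j-1})$-th root and letting $j\to\infty$, the factor $\e^{n/(2\nu s_{j-1})}\to 1$, giving a uniform $L^\infty$ bound. You would need to replace your H\"older-based step with this bookkeeping of the $\e^{n/2}$ factor to make the argument go through.
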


\begin{proof}
The proof employs a suitable adaptation of the arguments of \cite[Corollary~2.1]{Lin-Ni-Takagi}. First observe that by taking $v_{\e}$ in the weak formulation of \eqref{eq.10} we find
\begin{equation}
\e\iint_{\mathcal{C}}|\nabla_{x} v_{\e}|^{2}\,dx\,dy+\iint_{\mathcal{C}} |(v_{\e})_{y}|^{2}\,dx\,dy+\int_{\Omega}|v_{\e}(x,0)|^{2}\,dx=\int_{\Omega}|v_{\e}(x,0))|^{p+1}\,dx,\label{eq.22}
\end{equation}
so that
\[
\mathcal{I}_{\e}(v_{\e})=\frac{1}{2}\int_{\Omega}|v_{\e}(x,0))|^{p+1}\,dx-\frac{1}{p+1}\int_{\Omega}|v_{\e}(x,0)|^{p+1}\,dx=\left(\frac{1}{2}-\theta\right)\int_{\Omega}|v_{\e}(x,0)|^{p+1}\,dx,
\]
where
\[
\theta=\frac{1}{p+1}<\frac{1}{2}.
\]
Then Theorem \ref{TeoremaMountain} implies
$$\int_{\Omega}|v_{\e}(x,0)|^{p+1}\,dx\leq (2^{-1}-\theta)^{-1}C\e^{n/2}.$$
From this and \eqref{eq.22} we get \eqref{firstestim}.

To prove the uniform boundedness for small $\e$ we apply a classical Moser iteration, see \cite{Gilbarg-Trudinger, Lin-Ni-Takagi}. Let us choose in the weak formulation of problem \eqref{eq.10},
see Definition \ref{def:weak},  the test function
\[
\psi=v_{\e}^{2s-1},
\]
for some $s\geq1$. Then we have
\begin{equation}
\begin{aligned}
&\e\frac{2s-1}{s^2}\iint_{\mathcal{C}}|\nabla_{x} (v^{s}_{\e})|^{2}\,dx\,dy+\frac{2s-1}{s^2}\iint_{\mathcal{C}} |(v^{s}_{\e})_{y}|^{2}\,dx\,dy+\int_{\Omega}|v_{\e}^{s}(x,0)|^{2}\,dx\label{eq.18}\\&=\int_{\Omega}|v_{\e}(x,0)|^{p-1+2s}\,dx.
\end{aligned}
\end{equation}
Now, for $\e<1$, using the trace inequality \eqref{traceeqcomp} (where $\mathsf{C}(\e)=c\e^{1/4}$) and the fact that $(2s-1)/s^{2}<1$, we find from \eqref{eq.18}
\begin{equation*}
C_{0}^{2}c^2\e^{1/2}\frac{2s-1}{s^2}\left(\int_{\Omega}|v_{\e}(x,0)|^{s\nu}\,dx\right)^{2/\nu}\leq\int_{\Omega}|v_{\e}(x,0)|^{p-1+2s}\,dx,
\end{equation*}
where $\nu$ is the Sobolev trace embedding exponent
\[
\nu=\frac{2n}{n-1}.
\]
Since $(2s-1)s^{-2}\geq s^{-1}$ we get
\begin{equation}
\left(\int_{\Omega}|v_{\e}(x,0)|^{s\nu}\,dx\right)^{2/\nu}\leq\e^{-1/2}s\gamma^{2}\int_{\Omega}|v_{\e}(x,0)|^{p-1+2s}\,dx\label{eq.19}
\end{equation}
where $\gamma=(C_{0}c)^{-1}$.
Parallel to \cite[p.~14]{Lin-Ni-Takagi}, we define two sequences $\left\{s_{j}\right\}_{j=0}^\infty$
 and $\left\{M_{j}\right\}_{j=0}^\infty$ by setting
\[
p-1+2s_{0}=\nu,\quad p-1+2s_{j+1}=\nu s_{j},
\]
and
\[
M_{0}=((2^{-1}-\theta)^{-1}\gamma^{2} C)^{\nu/2},\quad M_{j+1}=(\gamma^{2}s_{j}M_{j})^{\nu/2}.
\]
In particular, we have that $s_{j}>1$ and $s_{j}\rightarrow\infty$ as $j\rightarrow\infty$.
We want to show that
\begin{equation}
\int_{\Omega}|v_{\e}(x,0)|^{p-1+2s_{j}}\,dx\leq M_{j}\e^{n/2},\label{eq.23}
\end{equation}
and
\begin{equation}
M_{j}\leq e^{ms_{j-1}},\label{eq.24}
\end{equation}
for some $m>0$.
Let us prove \eqref{eq.23} for $j=0$. Using \eqref{traceeqcomp} and \eqref{firstestim} one finds
\begin{align*}
\int_{\Omega}|v_{\e}(x,0)|^{p-1+2s_{0}}\,dx&=\int_{\Omega}|v_{\e}(x,0)|^{\nu}\,dx\leq \e^{-\nu/4}C^{-\nu}_{0}\|v_{\e}\|^{\nu}_{{\e}}\\
&\leq C_{0}^{-\nu}(2^{-1}-\theta)^{-\nu/2}C^{\nu/2}\,\e^{\nu n/4-\nu/4}\\
&=\gamma^{\nu}(2^{-1}-\theta)^{-\nu/2}C^{\nu/2}\,\e^{\nu n/4-\nu/4}\\
&=M_{0}\e^{\nu n/4-\nu/4}=M_0\e^{n/2}.
\end{align*}
Furthermore, using \eqref{eq.19} it is not difficult to show that if \eqref{eq.23} holds for $j>0$, then it holds for $j+1$ too. Also \eqref{eq.24} follows from the proof of \cite[Corollary~2.1]{Lin-Ni-Takagi}. Finally, by applying \eqref{eq.19}--\eqref{eq.23}--\eqref{eq.24},
\begin{align*}
\left(\int_{\Omega}|v_{\e}(x,0)|^{s_{j-1}\,\nu}\,dx\right)^{\frac{1}{\nu s_{j-1}}}&\leq\e^{-\frac{1}{4s_{j-1}}}(\gamma^{2})^{\frac{1}{2s_{j-1}}}s_{j-1}^{\frac{1}{2s_{j-1}}}
\left(\int_{\Omega}|v_{\e}(x,0)|^{p-1+2s_{j-1}}\,dx\right)^{\frac{1}{2s_{j-1}}}\\
&\leq\e^{\frac{n-1}{4s_{j-1}}}(\gamma^{2})^{\frac{1}{2s_{j-1}}}s_{j-1}^{\frac{1}{2s_{j-1}}}e^{m/2}.
\end{align*}
By letting $j\rightarrow\infty$, inequality \eqref{equibound} follows.
\end{proof}

\subsection{Shape of solutions}

We show that for small $\e$, the solution $u_{\e}$ of our fractional semilinear problem given by Corollary \ref{Cor:u epsilon} concentrates around some points and its graph looks like spikes on $\overline{\Omega}$.

\begin{theorem}[Shape of $u_\e$]\label{Shape}
For $K=(k_1\,\ldots,k_{n})\in\mathbb{Z}^{n}$ and $l>0$, define the cube of $\R^{n}$
\[
Q_{K,l}=\left\{(x_{1},\ldots,x_{n})\in\R^n:|x_{i}-lk_{i}|\leq\frac{l}{2},\,1\leq i\leq n\right\}.
\]
For small $\e$, let us consider the solution $u_\e$ given by Corollary \ref{Cor:u epsilon}
 and define for all $\eta>0$ the upper level set of $u_\e$
\[
\Omega_{\eta}=\left\{x\in\Omega:u_{\e}(x)>\eta\right\}.
\]
Then there is a positive integer $m$ depending only on $\Omega$, the constant $C$ appearing in Theorem \ref{TeoremaMountain} and $\eta$, such that $\Omega_\eta$ is covered by at most $m$ of
the $Q_{K,\sqrt{\e}}$ cubes.
\end{theorem}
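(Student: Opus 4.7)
The plan is to show that each cube $Q_{K,\sqrt\e}$ that meets $\Omega_\eta$ must contribute at least $c(\eta)\,\e^{n/2}$ to the integral $\int_\Omega u_\e^{p+1}\,dx$, and then use the energy bound $\int_\Omega u_\e^{p+1}\,dx\leq C'\e^{n/2}$ from Theorem \ref{Moser} together with the disjointness of the cubes to bound their number. The key tool is the Harnack estimate of Theorem \ref{Harnack}, which must be applied in a way that gives a constant independent of $\e$.

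First I would rewrite the semilinear equation satisfied by $u_\e$ as
\[
(-\e\Delta_N)^{1/2}u_\e+c_\e(x)\,u_\e=0,\qquad c_\e(x):=1-u_\e(x)^{p-1}.
\]
Thanks to the uniform $L^\infty$ bound \eqref{equibound} of Theorem \ref{Moser}, we have $\|c_\e\|_{L^\infty(\Omega)}\leq 1+C_1^{p-1}$ independently of $\e$. Now apply Theorem \ref{Harnack} with radius $R=\sqrt{n}\sqrt\e$, so that
\[
R\left(\|c_\e\|_{L^\infty(\Omega)}/\e\right)^{1/2}\leq\sqrt{n}(1+C_1^{p-1})^{1/2}
\]
is uniform in $\e$. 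Consequently, there is a constant $C^*$ depending only on $n,\Omega,C_1,p$ such that, for every $x_0\in\overline\Omega$,
\[
\sup_{B(x_0,R)\cap\Omega}u_\e\leq C^*\inf_{B(x_0,R)\cap\Omega}u_\e.
\]

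Next I would exploit this uniform Harnack on each cube meeting $\Omega_\eta$. Fix a cube $Q=Q_{K,\sqrt\e}$ with $Q\cap\Omega_\eta\neq\emptyset$ and pick $x_0\in Q\cap\Omega_\eta$; then $u_\e(x_0)>\eta$. Since the diameter of $Q$ is $\sqrt{n}\sqrt\e$, the ball $B(x_0,R)$ contains $Q$, so the Harnack inequality yields
\[
u_\e\geq \eta/C^*\qquad\text{on}~Q\cap\Omega.
\]
Because $\Omega$ is smooth, there is a constant $c_\Omega>0$, depending only on $\Omega$, such that $|Q_{K,\sqrt\e}\cap\Omega|\geq c_\Omega\,\e^{n/2}$ for every cube intersecting $\Omega$ and every sufficiently small $\e$ (this follows from a uniform interior cone condition applied at scale $\sqrt\e$). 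Hence
\[
\int_{Q\cap\Omega}u_\e^{p+1}\,dx\geq(\eta/C^*)^{p+1}|Q\cap\Omega|\geq c_\Omega(\eta/C^*)^{p+1}\e^{n/2}.
\]

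Finally, since the cubes $\{Q_{K,\sqrt\e}\}_{K\in\mathbb{Z}^n}$ are pairwise disjoint, summing the previous inequality over the cubes that meet $\Omega_\eta$ and invoking Theorem \ref{Moser} gives
\[
m\cdot c_\Omega(\eta/C^*)^{p+1}\e^{n/2}\leq\int_\Omega u_\e^{p+1}\,dx\leq(2^{-1}-\theta)^{-1}C\e^{n/2},
\]
so that
\[
m\leq\frac{(C^*)^{p+1}(2^{-1}-\theta)^{-1}C}{c_\Omega\,\eta^{p+1}},
\]
which depends only on $\Omega$, $C$ and $\eta$, as required. The main obstacle I expect is verifying that the Harnack constant $C^*$ can be chosen independent of $\e$; this is precisely the point at which the uniform $L^\infty$ bound \eqref{equibound} must be used to control $c_\e$, together with the careful scaling $R=\sqrt{n}\sqrt\e$ that cancels the $\e^{-1/2}$ in the dependence of the Harnack constant in Theorem \ref{Harnack}.
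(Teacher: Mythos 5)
Your plan is the right one and matches the structure of the argument the paper refers to (Lin--Ni--Takagi, Proposition~4.1): Harnack at scale $\sqrt\e$ with the $L^\infty$ bound of Theorem~\ref{Moser} controlling the potential $c_\e$, combined with the energy bound $\int_\Omega u_\e^{p+1}\,dx\leq C'\e^{n/2}$ and disjointness of the cubes. The rewriting $(-\e\Delta_N)^{1/2}u_\e+c_\e u_\e=0$ with $c_\e=1-u_\e^{p-1}$, the choice $R=\sqrt{n}\sqrt\e$ that makes $R(\|c_\e\|_\infty/\e)^{1/2}$ uniform in $\e$, and the conclusion $u_\e\geq\eta/C^*$ on $B(x_0,R)\cap\Omega\supset Q\cap\Omega$ are all correct.

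However, there is a genuine gap: the claim
\[
|Q_{K,\sqrt\e}\cap\Omega|\geq c_\Omega\,\e^{n/2}\qquad\text{for \emph{every} cube intersecting }\Omega
\]
is false. A cube from the grid can meet $\Omega$ in an arbitrarily small sliver: if $\partial\Omega$ crosses $Q$ close to one of its faces, then $|Q\cap\Omega|$ can be an arbitrarily small fraction of $\e^{n/2}$, no matter how smooth $\Omega$ is (the uniform interior cone at a point $x_0\in Q\cap\Omega$ sits inside $\Omega$, but it need not sit inside $Q$). So the lower bound $\int_{Q\cap\Omega}u_\e^{p+1}\,dx\geq c_\Omega(\eta/C^*)^{p+1}\e^{n/2}$ does not hold for all relevant cubes, and the final counting step breaks.

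The fix is standard and does not change the spirit of your proof: integrate over the balls $B_j:=B(x_j,\sqrt n\sqrt\e)$ (with $x_j\in Q_j\cap\Omega_\eta$) rather than over the cubes $Q_j$. Since $x_j\in\Omega$ and $\Omega$ is a smooth bounded domain, the measure density estimate $|B(x_j,r)\cap\Omega|\geq c_\Omega r^n$ holds uniformly for $x_j\in\Omega$ and $0<r\leq r_0(\Omega)$, giving $\int_{B_j\cap\Omega}u_\e^{p+1}\,dx\geq c_\Omega(\eta/C^*)^{p+1}n^{n/2}\e^{n/2}$. The balls $B_j$ are no longer disjoint, but they have bounded overlap: each $x_j$ lies in a distinct cube of the fixed grid of mesh $\sqrt\e$, and any point of $\R^n$ belongs to at most $N(n)$ of the balls $B_j$, with $N(n)$ depending only on $n$. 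Summing then gives
\[
m\,c_\Omega(\eta/C^*)^{p+1}n^{n/2}\e^{n/2}\leq N(n)\int_\Omega u_\e^{p+1}\,dx\leq N(n)(2^{-1}-\theta)^{-1}C\e^{n/2},
\]
which yields the desired bound on $m$. With this modification your argument is correct and essentially self-contained, whereas the paper simply defers to Lin--Ni--Takagi for the corresponding details.
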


By checking the proof of \cite[Proposition 4.1]{Lin-Ni-Takagi}, we see that
in order to prove Theorem \ref{Shape}
we only need the Harnack inequality of Theorem \ref{Harnack} and Lemma \ref{Lemma 4.1}.
The latter result is a consequence of the following proposition and its
proof follows the lines of \cite[Lemma~2.3]{Lin-Ni-Takagi}. Finally, the proof of Proposition \ref{contradarg}
uses Theorem \ref{Moser} with slight modifications of the arguments
in the proof of \cite[Proposition 2.2]{Lin-Ni-Takagi}. The rather cumbersome details are left to the interested reader.

\begin{proposition}\label{contradarg}
Fix $\e_0>0$. Then there is a constant $c_{0}>0$ such that
\[
\e\iint_{\mathcal{C}}|\nabla_{x} v_{\e}|^{2}\,dx\,dy+\iint_{\mathcal{C}} |(v_{\e})_{y}|^{2}\,dx\,dy+\int_{\Omega}|u_{\e}|^{2}\,dx\geq c_{0}\e^{n/2},
\]
for all $\e\in(0,\e_{0})$ and any solution $v_{\e}$ to \eqref{eq.10} whose trace is $v_{\e}(\cdot,0)=u_{\e}$, which solves \eqref{eq.8}.
\end{proposition}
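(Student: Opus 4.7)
The plan is to follow the blueprint of \cite[Proposition~2.2]{Lin-Ni-Takagi}, with the only substantive modification being the use of the extension/trace machinery in place of the interior energy, and of Theorem \ref{Harnack} in place of the standard Harnack for $-\e\Delta$. Testing the weak formulation \eqref{weakH} against $w=v_\e$ itself shows that
$$\e\iint_{\mathcal{C}}|\nabla_{x} v_{\e}|^{2}\,dx\,dy+\iint_{\mathcal{C}} |(v_{\e})_{y}|^{2}\,dx\,dy+\int_{\Omega}u_{\e}^{2}\,dx=\int_\Omega u_\e^{p+1}\,dx,$$
so the task reduces to proving $\int_\Omega u_\e^{p+1}\ge c_0\,\e^{n/2}$. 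Two preliminary observations simplify the situation: (i) the only constant positive solution is $u\equiv 1$ (since $c=c^p$ forces $c=1$), for which the bound is trivial provided $c_0\le|\Omega|/\e_0^{n/2}$; and (ii) testing the equation against the constant function $1$ and using that constants lie in the kernel of $(-\e\Delta_N)^{1/2}$ gives $\int u_\e = \int u_\e^p$, so a pointwise sign argument forces $\sup_\Omega u_\e>1$ whenever $u_\e$ is nonconstant.

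Assume for contradiction the existence of $\e_k\in(0,\e_0)$ and nonconstant positive solutions $u_{\e_k}$ with $\int u_{\e_k}^{p+1}/\e_k^{n/2}\to 0$. The first step is to show $\e_k\to 0$: otherwise, passing to a subsequence with $\e_k\to\e^*>0$, Theorem \ref{Moser} together with the Schauder estimates from Theorem \ref{Thm:regularity} would make $\{u_{\e_k}\}$ precompact in $C^{0}(\overline\Omega)$, so $\int u_{\e_k}^{p+1}\to 0$ would force $u_{\e_k}\to 0$ uniformly, contradicting $\sup u_{\e_k}>1$.

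Next, pick a maximum point $x_k\in\overline\Omega$ of $u_{\e_k}$, set $\alpha_k = u_{\e_k}(x_k)\in(1,C_1]$ (by Theorem \ref{Moser}), and rescale
$$\hat v_k(\xi,\eta)=\frac{1}{\alpha_k}\,v_{\e_k}\bigl(x_k+\sqrt{\e_k}\,\xi,\,\eta\bigr),\qquad \hat u_k(\xi)=\hat v_k(\xi,0),$$
on the expanding cylinder $\Omega_k\times(0,\infty)$ with $\Omega_k=(\Omega-x_k)/\sqrt{\e_k}$. A direct calculation transforms \eqref{eq.10} into the extension problem for $(-\Delta_N)^{1/2}$ on $\Omega_k$ with $\e=1$: $\Delta_\xi\hat v_k+\hat v_{k,\eta\eta}=0$, Neumann on the lateral boundary, and $-\hat v_{k,\eta}|_{\eta=0}+\hat u_k=\alpha_k^{p-1}\hat u_k^p$. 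In particular $\hat u_k(0)=1=\sup\hat u_k$, and $\hat u_k$ satisfies
$$(-\Delta_N)^{1/2}\hat u_k + c_k(\xi)\,\hat u_k = 0\quad\hbox{on}~\Omega_k,\qquad c_k:=1-\alpha_k^{p-1}\hat u_k^{p-1},$$
with $\|c_k\|_{L^\infty(\Omega_k)}\le 1+C_1^{p-1}$ uniformly in $k$.

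Applying Theorem \ref{Harnack} on a fixed ball $B_R(0)$ then yields $\hat u_k\ge 1/C_H$ on $B_R\cap\Omega_k$ for a constant $C_H$ independent of $k$, whence $\int_{\Omega_k}\hat u_k^{p+1}\,d\xi\ge c>0$ for all $k$ large. Undoing the rescaling,
$$\int_\Omega u_{\e_k}^{p+1}\,dx = \alpha_k^{p+1}\,\e_k^{n/2}\!\int_{\Omega_k}\hat u_k^{p+1}\,d\xi \ge c\,\e_k^{n/2},$$
contradicting $\int u_{\e_k}^{p+1}=o(\e_k^{n/2})$. The main technical subtlety is the uniformity in $k$ of the Harnack constant when $x_k\in\partial\Omega$: since the rescaled boundary $\partial\Omega_k$ near $0$ has principal curvatures of order $\sqrt{\e_k}$ and therefore flattens toward a hyperplane, the flattening-plus-reflection proof of Theorem \ref{Harnack} yields a constant that is stable as $\e_k\to 0$, so $C_H$ can indeed be chosen uniform in $k$.
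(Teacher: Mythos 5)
Your preliminary reductions are correct and do match the blueprint from Lin--Ni--Takagi: testing the weak formulation against $v_\e$ itself gives the energy identity $\|v_\e\|_\e^2=\int_\Omega u_\e^{p+1}\,dx$, so the task is to bound $\int_\Omega u_\e^{p+1}$ from below; testing against the constant function $1$ gives $\int_\Omega u_\e=\int_\Omega u_\e^p$, which forces $\sup_\Omega u_\e>1$ for nonconstant $u_\e$; and the constant case $u_\e\equiv1$ is trivial on $(0,\e_0)$.

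However, the contradiction/blow-up step introduces a genuine gap and is also more elaborate than what the reference actually calls for. After rescaling, you apply Theorem \ref{Harnack} to the rescaled trace $\hat u_k$ on the rescaled domain $\Omega_k=(\Omega-x_k)/\sqrt{\e_k}$. Theorem \ref{Harnack} is stated for the fixed domain $\Omega$; its constant \emph{depends on} $\Omega$, and nothing in the paper asserts uniformity of the Harnack constant over a family of expanding, boundary-flattening domains $\Omega_k$. You flag this yourself as ``the main technical subtlety'' and assert that the flattening-plus-reflection proof is stable, but that claim is not substantiated, and it is exactly the hard point. The cleaner route the paper is pointing to avoids this entirely: work at the original scale on $\Omega$. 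Write the equation as $(-\e\Delta_N)^{1/2}u_\e+c(x)u_\e=0$ with $c=1-u_\e^{p-1}$, so that $\|c\|_{L^\infty(\Omega)}\le1+C_1^{p-1}$ by Theorem \ref{Moser}. Apply Theorem \ref{Harnack} directly on the ball $B=B(x_0,\sqrt\e)$ where $x_0$ is a maximum point of $u_\e$ in $\overline\Omega$: the quantity $R(\|c\|_{L^\infty}/\e)^{1/2}=\|c\|_{L^\infty}^{1/2}$ is bounded independently of $\e$, so the Harnack constant $C_H$ is uniform in $\e\in(0,\e_0)$. Since $\sup_{B\cap\Omega}u_\e\ge\sup_\Omega u_\e>1$, one gets $u_\e\ge1/C_H$ on $B(x_0,\sqrt\e)\cap\Omega$, and hence
\[
\int_\Omega u_\e^{p+1}\,dx\ \ge\ C_H^{-(p+1)}\,\big|B(x_0,\sqrt\e)\cap\Omega\big|\ \ge\ c_0\,\e^{n/2},
\]
using the smoothness of $\partial\Omega$ and $\e<\e_0$. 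This gives the stated lower bound directly, with no rescaling, no auxiliary contradiction sequence, and no Harnack estimate on a moving domain. Your proposal would become correct if you replaced the blow-up by this direct Harnack argument; as written, the step ``Applying Theorem \ref{Harnack} on $\Omega_k$ with $C_H$ independent of $k$'' is not justified by the material in the paper.
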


\begin{lemma}\label{Lemma 4.1}
Let $u_\e$ be as in Corollary \ref{Cor:u epsilon}. Then
$$m(q)\e^{n/2}\leq\int_{\Omega}u_{\e}^{q}\,dx\leq M(q)\e^{n/2},\quad\hbox{if}~1\leq q<+\infty,$$
$$m(q)\e^{n/2}\leq\int_{\Omega}u_{\e}^{q}\,dx\leq M(q)\e^{nq/2},\quad\hbox{if}~0<q<1,$$
where $m(q),M(q)$ are positive constants independent of $\e$, such that $m(q)<M(q)$.
\end{lemma}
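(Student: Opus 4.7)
The plan is to derive all the claimed $L^q$ bounds from four ingredients already at our disposal: the energy upper bound $\int_\Omega u_\e^{p+1}\,dx \leq C\e^{n/2}$ and the uniform $L^\infty$ bound $\|u_\e\|_{L^\infty(\Omega)}\leq C_1$ from Theorem \ref{Moser}; the reverse inequality $\int_\Omega u_\e^{p+1}\,dx \geq c_0\e^{n/2}$, obtained by combining Proposition \ref{contradarg} with \eqref{eq.22}; and a new algebraic identity obtained by pairing the equation $(-\e\Delta_N)^{1/2}u_\e + u_\e = u_\e^p$ against the constant function $1\in \mathcal{H}_\e(\Omega)$. Since constants lie in the kernel of $(-\e\Delta_N)^{1/2}$ (equivalently, integrate the extension problem over $\Omega$ and exploit the lateral Neumann condition), this produces the balance
\[
\int_\Omega u_\e\,dx = \int_\Omega u_\e^p\,dx.
\]

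The main obstacle, and the first genuine step, is the sharp $L^1$ bound $\int_\Omega u_\e\,dx \leq C\e^{n/2}$: a direct H\"older estimate out of $L^{p+1}$ yields only $\int u_\e \leq C\e^{np/(2(p+1))}$, which is strictly weaker than $\e^{n/2}$ for $p>1$. To bypass this, I would apply the log-convexity of $L^q$ norms (Littlewood's interpolation inequality) to sandwich $L^p$ between $L^1$ and $L^{p+1}$, obtaining
\[
\int_\Omega u_\e^p\,dx \leq \Bigl(\int_\Omega u_\e\,dx\Bigr)^{1/p}\Bigl(\int_\Omega u_\e^{p+1}\,dx\Bigr)^{(p-1)/p}.
\]
Setting $A=\int u_\e=\int u_\e^p$ and $B=\int u_\e^{p+1}$, this reads $A\leq A^{1/p}B^{(p-1)/p}$, so that $A^{(p-1)/p}\leq B^{(p-1)/p}$ and hence $A\leq B\leq C\e^{n/2}$. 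It is precisely the combination of the algebraic identity from the equation with log-convexity (as opposed to naive H\"older) that collapses the hierarchy of $L^q$ norms to the correct scaling.

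Once both $\int u_\e \leq C\e^{n/2}$ and $\int u_\e^{p+1}\leq C\e^{n/2}$ are in hand, the upper bounds follow cleanly. For $q\in[1,p+1]$, a second application of log-convexity between $L^1$ and $L^{p+1}$ gives
\[
\int_\Omega u_\e^q\,dx \leq \Bigl(\int_\Omega u_\e\Bigr)^{(p+1-q)/p}\Bigl(\int_\Omega u_\e^{p+1}\Bigr)^{(q-1)/p}\leq M(q)\e^{n/2},
\]
since the two exponents sum to $1$. For $q>p+1$, the $L^\infty$ bound gives $\int u_\e^q \leq C_1^{q-p-1}\int u_\e^{p+1} \leq M(q)\e^{n/2}$. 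For $0<q<1$, H\"older in the form $\int u_\e^q \leq \bigl(\int u_\e\bigr)^q|\Omega|^{1-q}$ produces the weaker exponent $\e^{nq/2}$ appearing in the second line of the statement.

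For the lower bound $\int u_\e^q \geq m(q)\e^{n/2}$ (valid for every $q>0$), I would first show that $M_\e:=\|u_\e\|_{L^\infty(\Omega)}$ is bounded below by a positive constant $M_0$ independent of $\e$. The estimate $\int u_\e^{p+1} \leq M_\e^{p-1}\int u_\e^2$ combined with $\int u_\e^2 \leq C\e^{n/2}$ from \eqref{firstestim} and $\int u_\e^{p+1}\geq c_0\e^{n/2}$ from Proposition \ref{contradarg} gives $M_\e\geq(c_0/C)^{1/(p-1)}=:M_0$. Then, letting $x_\e\in\overline{\Omega}$ be a maximum point of $u_\e$, I would rewrite the equation as $(-\e\Delta_N)^{1/2}u_\e+(1-u_\e^{p-1})u_\e=0$ with coefficient $c(x)=1-u_\e^{p-1}(x)$ uniformly bounded in $\e$, and apply Theorem \ref{Harnack} on $B(x_\e,\e^{1/2})\cap\Omega$, so that the scaling parameter $R(\|c\|_{L^\infty}/\e)^{1/2}$ is uniformly controlled. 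This yields $\inf_{B(x_\e,\e^{1/2})\cap\Omega}u_\e \geq M_0/C$; since smoothness of $\Omega$ guarantees $|B(x_\e,\e^{1/2})\cap\Omega|\geq c\e^{n/2}$, we conclude $\int_\Omega u_\e^q\,dx \geq m(q)\e^{n/2}$ for every $q>0$.
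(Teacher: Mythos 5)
Your proposal is correct, and it reconstructs essentially the same argument the paper points to by citing \cite[Lemma~2.3]{Lin-Ni-Takagi}: the identity $\int_\Omega u_\e\,dx=\int_\Omega u_\e^p\,dx$ from testing against the constant $1$, the interpolation $\int u_\e^p\leq(\int u_\e)^{1/p}(\int u_\e^{p+1})^{(p-1)/p}$ collapsing to $\int u_\e\leq\int u_\e^{p+1}\leq C\e^{n/2}$, the upper bounds via H\"older and the $L^\infty$ bound of Theorem~\ref{Moser}, and the lower bound via the boundary Harnack inequality (Theorem~\ref{Harnack}) applied at scale $R=\e^{1/2}$ around a maximum point together with Proposition~\ref{contradarg}. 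The scaling choice $R=\e^{1/2}$ so that $R(\|c\|_{L^\infty}/\e)^{1/2}$ is uniformly controlled is the crucial adaptation to the fractional Neumann setting, and you identified it correctly.
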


\subsection{Uniform boundedness for all $\e>0$}

We have shown in Theorem \ref{Moser} that the solutions $u_{\e}$ determined in
Corollary \ref{Cor:u epsilon} are uniformly bounded. The following result shows that
this uniform boundedness property can be extended to \emph{all} $\e$, no matter how small they are.

\begin{theorem}[Uniform boundedness in $\e>0$]\label{Unifboundtheo}
 There exists a positive constant $C$ independent of  $\e$ such that for any positive solution $u$
 to \eqref{eq.8} we have
 $$\sup_\Omega u\leq C.$$
\end{theorem}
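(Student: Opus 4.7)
The plan is a standard blow-up (compactness-contradiction) argument carried out on the extension problem \eqref{eq.10}, reduced to a Liouville-type nonexistence theorem on the half-space.

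Suppose the conclusion fails: then there exist sequences $\e_k>0$ and positive solutions $u_k$ to \eqref{eq.8} with $\e=\e_k$ such that $M_k:=\|u_k\|_{L^\infty(\Omega)}\to\infty$. The constant solution is $u\equiv 1$, and by Theorem \ref{existconst} nonconstant positive solutions only exist for $\e$ below some threshold, so we may assume $\e_k\in(0,\e_0]$. By Corollary \ref{Cor:u epsilon} and the regularity shown in Theorem \ref{Thm:regularity}, $u_k\in C^{1,\alpha}(\overline{\Omega})$, so $M_k=u_k(x_k)$ for some $x_k\in\overline{\Omega}$; along a subsequence $x_k\to x_\infty\in\overline{\Omega}$.

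Let $v_k=E^{\e_k}(u_k)\in C^{2,\alpha}(\mathcal{C})\cap C^{1,\alpha}(\overline{\mathcal{C}})$ and define the rescaled functions
\[
V_k(z,w):=M_k^{-1}\,v_k(x_k+\alpha_k z,\beta_k w),\qquad \beta_k:=M_k^{-(p-1)},\quad \alpha_k:=\sqrt{\e_k}\,\beta_k.
\]
A direct computation, using that $\e_k\Delta_x v_k+(v_k)_{yy}=0$ and the nonlinear boundary condition $-(v_k)_y+v_k=v_k^p$ on $\Omega\times\{0\}$, shows that $V_k$ satisfies
\[
\Delta_{z,w}V_k=0 \text{ in } \Omega_k\times(0,\infty),\quad \partial_\nu V_k=0 \text{ on } \partial\Omega_k\times[0,\infty),
\]
\[
-\partial_w V_k(z,0)=V_k(z,0)^p-M_k^{-(p-1)}V_k(z,0) \text{ on } \Omega_k,
\]
where $\Omega_k:=(\Omega-x_k)/\alpha_k$, together with $0\leq V_k\leq 1$ and $V_k(0,0)=1$. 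Since $\alpha_k=\sqrt{\e_k}/M_k^{p-1}\to 0$, the rescaled domain $\Omega_k$ dilates: if $x_\infty$ lies in the interior of $\Omega$ then $\Omega_k$ exhausts $\R^n$, while if $x_\infty\in\partial\Omega$ we rotate so that the inward normal at $x_\infty$ is $e_n$ and flatten $\partial\Omega$ near $x_\infty$ as in the proof of Theorem \ref{Thm:regularity}, so that (after a further translation in case $d(x_k,\partial\Omega)/\alpha_k$ remains finite) $\Omega_k$ exhausts the half-space $\{z_n>0\}$.

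Uniform $C^{1,\alpha}_{\mathrm{loc}}$ bounds on $V_k$ follow from interior Schauder estimates for the harmonic equation $\Delta_{z,w}V_k=0$ together with boundary Schauder estimates for the Neumann problem (the nonlinear datum $V_k^p-M_k^{-(p-1)}V_k$ is uniformly bounded since $V_k\in[0,1]$, and the lateral Neumann datum is zero), as in the boundary regularity argument used at the end of the proof of Theorem \ref{Thm:regularity}. By Arzelà--Ascoli we extract a subsequence $V_k\to V$ in $C^1_{\mathrm{loc}}$, with $V\geq 0$, $V\leq 1$, $V(0,0)=1$, and $V$ solving $\Delta V=0$ on $\R^{n+1}_+$ (interior case) or on $\{z_n>0\}\times(0,\infty)$ with $\partial_{z_n}V|_{z_n=0}=0$ (boundary case), and in either case $-V_w(z,0)=V(z,0)^p$ on the corresponding bottom face. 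Even reflection across $\{z_n=0\}$ reduces the boundary case to the interior case, so in all situations $V$ is a nonnegative, nontrivial, bounded classical solution of
\[
\Delta V=0 \text{ in } \R^{n+1}_+,\qquad -V_w=V^p \text{ on } \R^n\times\{0\},
\]
with $1<p<(n+1)/(n-1)$.

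The final ingredient is the Liouville-type theorem for this half-space problem in the subcritical regime below the Sobolev trace exponent $(n+1)/(n-1)$, which asserts that the only nonnegative bounded solution is $V\equiv 0$ (this is the analogue for the trace problem of the classical Gidas--Spruck result, and is available in the literature on semilinear problems with nonlinear boundary conditions, e.g.\ via moving planes after a Kelvin transform). Applying this result contradicts $V(0,0)=1$ and completes the proof. The delicate step, and the main obstacle, is quoting or re-proving the subcritical Liouville theorem on the half-space; the remainder of the argument is the standard blow-up scheme adapted to the extension formulation and made possible by the regularity theory of Section \ref{Section:lineal}.
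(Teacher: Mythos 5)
Your blow-up argument for $\e$ in a bounded range $(0,\e_0]$ is essentially the paper's Step~1 (and your rescaling, which dilates both $x$ and $y$ by $\sqrt{\e_k}\,M_k^{-(p-1)}$ and $M_k^{-(p-1)}$ respectively, is carried out correctly). However, the proof has a genuine logical gap at the very first reduction: you invoke Theorem~\ref{existconst} to assert that nonconstant positive solutions only exist for $\e$ below some threshold, and thereby confine $\e_k$ to a bounded interval. But Theorem~\ref{existconst} is proved in the paper \emph{after} the present theorem and its proof explicitly uses the uniform bound $\sup_\Omega u\leq C$ of Theorem~\ref{Unifboundtheo}. Using it here is circular, so you have no a~priori control on $\e_k$ when $M_k\to\infty$.

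This matters because the blow-up argument genuinely can fail if $\e_k$ is unbounded: your rescaling parameter $\alpha_k=\sqrt{\e_k}/M_k^{p-1}$ need not tend to zero when $\e_k\to\infty$, so the rescaled domains $\Omega_k$ need not exhaust $\R^n$ or a half-space, and you never arrive at the Liouville problem. The paper handles large $\e$ in a separate Step~2 that you do not reproduce. There, one first gets a (non-uniform) bound $\sup_\Omega u\leq C\,\e^{1/(2(p-1))}$ for $\e\geq\e_0$ by a contradiction that falls back to Step~1, then runs a Moser iteration in which the factor $\e^{1/2}$ coming from the coercivity $\e\iint|\nabla_x v^s|^2+\iint|(v^s)_y|^2\gtrsim \e^{1/2}[u^s]^2_{H^{1/2}(\Omega)}$ (Lemma~\ref{thm:traces} and Theorem~\ref{thm:trace and H Omega}) exactly cancels the factor $\e^{1/2}$ produced on the right by $u^{p-1}\leq C^{p-1}\e^{1/2}$; the iteration is finally closed using the mass identity $\int_\Omega u\,dx=\int_\Omega u^p\,dx$, which by H\"older gives $\int_\Omega u^p\,dx\leq|\Omega|$ independently of $\e$. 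Without something of this sort, your argument establishes the bound only on $(0,\e_0]$ for each fixed $\e_0$, with constant depending on $\e_0$, not the uniform-in-$\e$ statement asserted.
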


\begin{proof}
 The proof is based on a combination of techniques that are parallel to the
 arguments used in \cite[Theorem~3(i)]{Lin-Ni-Takagi}, whose roots can be tracked down to
 one of the famous papers by B. Gidas and J. Spruck \cite{Gidas-Spruck}.
We need to flatten the boundary of $\Omega$ and then use a blow up technique together
 with a Liouville result for the fractional Neumann Laplacian.

 The proof is divided in two steps. First one shows that, for a fixed $\e_0>0$, the estimate holds
 for all solutions of \eqref{eq.8} uniformly in $0<\e\leq\e_0$. The second step is
 to give the proof when $\e\geq\e_0$.

 \

 \noindent\texttt{Step 1.} Let $\e_0>0$ be fixed. The proof goes by contradiction. That is, suppose
 there exists a sequence of positive solutions $\{u_k\}_{k\in\N}$ of \eqref{eq.8} corresponding
 to parameters $\{\e_k\}_{k\in\N}$, with $0<\e_k\leq\e_0$, and a sequence of points $P_k\in\overline{\Omega}$
 such that
 $$M_k:=\sup_\Omega u_k=u_k(P_k)\to\infty,\quad\hbox{and}\quad P_k\to P\in\overline{\Omega},\quad\hbox{as}~k\to\infty.$$
 Let $v_k(x,y)$ be the solution to the extension problem \eqref{eq.10} corresponding to each $u_k$,
 therefore $v_k(x,0)=u_k(x)$. By Hopf's maximum principle, the maximum of $v_{k}$ can lie only on $\overline{\Omega}\times \left\{0\right\}$, thus $\sup_{\overline{\mathcal{C}}} v_{k}=v_{k}(P_{k},0)=M_{k}$. In this step we have two cases, depending on where $P$ lies.

 \noindent\texttt{Case 1.}  Suppose that $P\in\partial\Omega$.
 Without loss of generality we can assume that $P$ is the origin and that
 the exterior normal to $\partial\Omega$ at $P$ is the vector $-e_n\in\R^n$.
 Arguing as in the proof of Theorem \ref{Thm:regularity}(4), we can straighten the boundary near $P$ with a
  local diffeomorphism $\Psi$. Let us call $z$ the new coordinates, and let
 $\widetilde{v}_k(z,y)=v_k(x,y)$. As in \cite[p.~19]{Lin-Ni-Takagi}
 it can be verified that $\widetilde{v}_k(z,y)$ satisfies the following extension problem:
 \begin{equation}\label{ext}
 \begin{cases}
  \e_kL_z\widetilde{v}_k+(\widetilde{v}_k)_{yy}=0,&\hbox{in}~(B_{2\delta}\cap\{z_n>0\})\times(0,\infty),\\
  \partial_{\nu}\widetilde{v}_k=0,&\hbox{on}~(B_{2\delta}\cap\{z_n=0\})\times[0,\infty),\\
  -(\widetilde{v}_k)_y(x,0)=
  g(\widetilde{v}_k(x,0))-\widetilde{v}_k(x,0),&\hbox{on}~B_{2\delta}\cap\{z_n>0\},
 \end{cases}
 \end{equation}
for some small $\delta>0$. Here $L_z$ is a nondivergence form elliptic operator with
smooth coefficients and no independent term
acting in the $z$-variable only and $B_{2\delta}$ is the ball on $\R^n$ centered at the
origin with radius $2\delta$.
Let $Q_k=\Psi(P_k)=(q_k',\alpha_k)$, $\alpha_k\geq0$. Since $Q_k\to0$,
we can assume that $|Q_k|<\delta$. Notice that
$$\lambda_k:=\Big(\frac{\e_k}{M_k^{p-1}}\Big)^{1/2}\to0,\quad\hbox{as}~k\to\infty.$$
We have now two subcases.

\noindent\texttt{Subcase 1.1.} $\alpha_k/\lambda_k$ remains bounded as $k\to\infty$. Then, up to a subsequence,
$\alpha_k/\lambda_k\to\alpha\geq0$. Define then the rescaled function
$$w_k(z,y):=\frac{1}{M_k}\widetilde{v}_k(\lambda_k z'+q_k',\lambda_kz_n,y),\quad z=(z',z_n)\in (B_{\delta/\lambda_k}
\cap\{z_n>0\}),~y>0.$$
Observe that $0<w_k\leq1$.
Then, from \eqref{ext} we can verify that $w_k$ satisfies the extension problem
$$\begin{cases}
  \widetilde{L}^k_zw_k+(w_k)_{yy}=0,&\hbox{in}~(B_{\delta/\lambda_k}\cap\{z_n>0\})\times(0,\infty),\\
  \partial_{\nu}w_k=0,&\hbox{on}~(B_{\delta/\lambda_k}\cap\{z_n=0\})\times[0,\infty),\\
  -(w_k)_y(x,0)=g(w_k(x,0))-M_k^{-(p-1)}w_k(x,0),
  &\hbox{on}~B_{\delta/\lambda_k}\cap\{z_n>0\}.
 \end{cases}$$
The coefficients of $\widetilde{L}^k_z$ are now a rescaled and translated version of the coefficients
of $L_z$ and are uniformly bounded in $k$. Then the compactness
arguments in \cite{Lin-Ni-Takagi} can be paralleled in such a way that
we can extract from $\left\{w_{k}\right\}_{k\in\N}$ a subsequence converging uniformly to a nonnegative solution $w(z,y)$
to the extension problem
$$\begin{cases}
  \Delta_zw+w_{yy}=0,&\hbox{in}~\R^n_+\times(0,\infty),\\
  \partial_{\nu}w=0,&\hbox{on}~\partial\R^n_+\times[0,\infty),\\
  -w_y(x,0)=g(w(x,0)),&\hbox{on}~\R^n_+.
 \end{cases}$$
 Let us now extend $w$ to $\R^{n}\times[0,\infty)$ as $w^\ast(z',z_n,y)=w(z',|z_n|,y)$,
 so that $w^\ast$ is a solution to
$$\begin{cases}
  \Delta_zw^\ast+w^\ast_{yy}=0,&\hbox{in}~\R^n\times(0,\infty),\\
  -w^\ast_y(x,0)=(w^\ast(x,0))^p,&\hbox{in}~\R^n.
 \end{cases}$$
The Liouville theorem of \cite{Hu} (see also \cite[Remark~1.4]{YanYanLi}) implies that
 $w^{\ast}$  is identically zero. But this a contradiction because
$$w(0,\ldots,0,\alpha,0)=\lim_{k\to\infty}w_k(0,\ldots,0,\alpha_k/\lambda_k,0)
=\lim_{k\to\infty}\frac{1}{M_k}\widetilde{v}_k(Q_k,0)=1.$$

 \noindent\texttt{Subcase 1.2.} $\alpha_k/\lambda_k$ is unbounded. We can suppose that $\alpha_k/\lambda_k\to\infty$.
 Now we define
 $$w_k(z,y)=\frac{1}{M_k}\widetilde{v}_k(\lambda_kz+Q_k,y).$$
 Then the argument goes as in \cite{Lin-Ni-Takagi} with the proper modifications
 in the extension problem as we did in subcase 1.1, and using the Liouville theorem of \cite{Hu,YanYanLi}.

 \noindent\texttt{Case 2.} Suppose that $P$ is in the interior of $\Omega$.
 The scaling we need now is
 $$w_k(z,y)=\frac{1}{M_k}\widetilde{v}_k(\lambda_kz+P_k,y),$$
 with $\lambda_k$ as above and the argument goes as in as in Subcase 1.2. Details are left
 to the interested reader.

 \

 \noindent\texttt{Step 2.}  If $\e\geq\e_{0}$, by arguing by contradiction to fall into Step 1, we can prove that
 \begin{equation}
 \sup_{\Omega} u\leq C \e^{1/2(p-1)}\label{eq.27}
 \end{equation}
 being $C$ a constant independent on $\e$. From \eqref{eq.18} and \eqref{eq.27} we obtain
 \begin{equation}
\begin{aligned}
&\e\frac{2s-1}{s^2}\iint_{\mathcal{C}}|\nabla_{x} (v^{s})|^{2}\,dx\,dy+\frac{2s-1}{s^2}\iint_{\mathcal{C}} |(v^{s})_{y}|^{2}\,dx\,dy+\int_{\Omega}|u^{s}|^{2}\,dx=\int_{\Omega}|u|^{p-1+2s}\,dx\label{eq.29}\\
&\leq C^{p-1}\e^{1/2}\int_{\Omega}u^{2s}\,dx.
\end{aligned}
\end{equation}
Using Lemma \ref{thm:traces} (inequality \eqref{eq.20}) and Theorem \ref{thm:trace and H Omega} we have
\[
\e\iint_{\mathcal{C}}|\nabla_{x} (v^{s})|^{2}\,dx\,dy+\iint_{\mathcal{C}} |(v^{s})_{y}|^{2}\,dx\,dy\geq
\|(-\e\Delta_{N})^{1/4} u^{s}\|_{L^{2}(\Omega)}^2\geq
C_{1}\e^{1/2}[u^{s}]^{2}_{H^{1/2}(\Omega)}.
\]
Plugging this estimate into \eqref{eq.29} and noticing that $s^{2}/(2s-1)\leq s$ for $s\geq1$, we get
\[
[u^{s}]^{2}_{H^{1/2}(\Omega)}\leq C_{2}s\int_{\Omega}u^{2s}\,dx.
\]
Thus
\begin{equation}
\|u^{s}\|^{2}_{H^{1/2}(\Omega)}=\|u^{s}\|^{2}_{L^{2}(\Omega)}+[u^{s}]^{2}_{H^{1/2}(\Omega)}\leq C_{3}s\int_{\Omega}u^{2s}\,dx \label{eq.28}
\end{equation}
where the constants $C_{i}$, $i=1,2,3$ depend only on $n$ and $\Omega$. Hence, by applying the fractional Sobolev embedding $H^{1/2}(\Omega)\hookrightarrow L^{\nu}(\Omega)$ in \eqref{eq.28}, where
\[
\nu=\frac{2n}{n-1},
\]
we find
\begin{equation}
\left(\int_{\Omega}u^{s\nu}\,dx\right)^{2/\nu}\leq C_{4} s \int_{\Omega}u^{2s}\,dx\label{eq.31},
\end{equation}
for some constant $C_{4}=C_{4}(n)$.
At this point we are ready to proceed as in \cite[pp.~21--22]{Lin-Ni-Takagi}. Set
\[
r_{1}=p, \quad r_{j+1}=2^{-1}\nu r_{j},
\]
so that
\[
r_{j}=p\,(2^{-1}\nu)^{j-1},
\]
and put
\[
\alpha_{j}=\int_{\Omega}u^{r_{j}}\,dx, \quad j\geq1.
\]
Then, by \eqref{eq.31},
\[
\alpha_{j+1}\leq(C_{5}r_{j})^{\nu/2}\alpha_{j}^{\nu/2},
\]
where $C_{5}=C_{4}/2$ and, as in \cite[p.~21]{Lin-Ni-Takagi},
\[
\limsup_{j\rightarrow\infty}r_{j}^{-1}\log\alpha_{j}\leq\frac{1}{p}\left[\log\alpha_{1}+\nu^{\ast}(\nu^{\ast}-1)^{-1}\left\{\log (p\,C_{4})+(\nu^{\ast}-1)^{-1}\log\nu^{\ast}\right\}\right]
\]
where $\nu^{\ast}=\nu/2$. Thus
\[
\|u\|_{L^{\infty}(\Omega)}\leq C_{6}\alpha_{1}^{1/p},
\]
for some suitable constant $C_{6}=C_6(n,\Omega)$. By integrating the first equation in \eqref{eq.10} over
 $\mathcal{C}$, we have
\[
\int_{\Omega}u\,dx=\int_{\Omega}u^{p}\,dx.
\]
Hence, by H\"older's inequality,
\[
\int_{\Omega}u^{p}\,dx\leq|\Omega|^{(p-1)/p}\left(\int_{\Omega}u^{p}\,dx\right)^{1/p},
\]
so
\[
\alpha_{1}=\int_{\Omega}u^{p}\,dx\leq |\Omega|,
\]
namely
\[
\|u\|_{L^{\infty}(\Omega)}\leq C_{6}\,|\Omega|^{1/p},
\]
and the proof is complete.
\end{proof}

\subsection{Nonexistence for large $\e$}

As a consequence of the boundedness result contained in Theorem \ref{Unifboundtheo}
and by following  {ideas contained in} \cite{MR849484}
we are able now to show that $u\equiv1$ is actually the only positive solution to \eqref{eq.8} for large $\e$.

\begin{theorem}\label{existconst}
There exists $\e^{\ast}>0$ such that if $\e>\e^{\ast}$, then $u\equiv1$ is the only positive solution to \eqref{eq.8}.
\end{theorem}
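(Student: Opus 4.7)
The plan is to exploit the uniform $L^\infty$ bound $\sup_\Omega u \leq C$ from Theorem~\ref{Unifboundtheo} together with a spectral Poincar\'e-type inequality for $(-\e\Delta_N)^{1/2}$ to force any positive solution $u$ to coincide with its mean $\bar{u} := u_\Omega$ when $\e$ is large, after which the equation immediately forces $\bar{u} = 1$.

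First I would test the equation $(-\e\Delta_N)^{1/2}u + u = u^p$ against $w := u - \bar{u} \in H^{1/2}(\Omega) = \mathcal{H}_\e(\Omega)$. Since constants lie in the kernel of $(-\e\Delta_N)^{1/2}$, the spectral definition gives $\langle(-\e\Delta_N)^{1/2}u, \bar{u}\rangle = 0$; combined with the bilinear identity $\langle(-\e\Delta_N)^{1/2}u, u\rangle = \|(-\e\Delta_N)^{1/4}u\|_{L^2(\Omega)}^2$, the weak formulation then yields
\[
\|(-\e\Delta_N)^{1/4}u\|_{L^2(\Omega)}^2 + \int_\Omega u(u-\bar{u})\,dx = \int_\Omega u^p(u-\bar{u})\,dx.
\]
The second term on the left simplifies to $\|u-\bar{u}\|_{L^2(\Omega)}^2$, and since $\int_\Omega(u-\bar{u})\,dx = 0$ the right-hand side can be rewritten as $\int_\Omega(u^p-\bar{u}^p)(u-\bar{u})\,dx$. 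The uniform bound $0 < u, \bar{u} \leq C$ combined with the mean value theorem gives $(u^p - \bar{u}^p)(u-\bar{u}) = p\xi^{p-1}(u-\bar{u})^2 \leq pC^{p-1}(u-\bar{u})^2$ pointwise, for some $\xi$ between $u$ and $\bar{u}$.

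Next I would use the spectral expansion and the fact that $w$ has zero mean to obtain the fractional Poincar\'e inequality
\[
\|(-\e\Delta_N)^{1/4}u\|_{L^2(\Omega)}^2 = \sum_{k=1}^\infty(\e\lambda_k)^{1/2}|\langle u,\varphi_k\rangle_{L^2(\Omega)}|^2 \geq (\e\lambda_1)^{1/2}\|u-\bar{u}\|_{L^2(\Omega)}^2,
\]
where $\lambda_1 > 0$ is the first nontrivial Neumann eigenvalue of $-\Delta$ on $\Omega$. Plugging this into the previous identity produces
\[
\bigl[(\e\lambda_1)^{1/2} + 1 - pC^{p-1}\bigr]\|u-\bar{u}\|_{L^2(\Omega)}^2 \leq 0.
\]
Because $C$ is independent of $\e$, one may choose $\e^* > 0$ so that $(\e^*\lambda_1)^{1/2} + 1 > pC^{p-1}$; for every $\e > \e^*$ the bracket is strictly positive, forcing $u \equiv \bar{u}$. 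Substituting a positive constant into \eqref{eq.8} gives $\bar{u} = \bar{u}^p$, hence $\bar{u} = 1$.

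The point that makes the argument succeed is precisely the $\e$-independence of $C$ in Theorem~\ref{Unifboundtheo}: without it, the threshold $pC^{p-1}$ on the right could absorb any gain obtained by enlarging $\e$, and the linear-algebraic step would collapse. Modulo that input, the only small technicality is to justify that $\bar{u}$ is an admissible test function against a distribution in $H^{-1/2}(\Omega)$, which is automatic since constants belong to $\mathcal{H}_\e(\Omega) = H^{1/2}(\Omega)$ and are annihilated by $(-\e\Delta_N)^{1/2}$.
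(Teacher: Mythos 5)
Your proof is correct and follows essentially the same strategy as the paper: decompose $u$ into its mean $\bar u=u_\Omega$ and the zero-mean fluctuation, test against that fluctuation, use the $\e$-independent bound $\sup_\Omega u\le C$ from Theorem~\ref{Unifboundtheo} to control the nonlinear term by $pC^{p-1}\|u-\bar u\|_{L^2(\Omega)}^2$, invoke a Poincar\'e-type coercivity that grows with $\e$, and read off the contradiction for $\e$ large. The only difference is in how the Poincar\'e step is implemented: the paper first passes to the extension $v^\phi=E^\e(\phi)$, then uses Lemma~\ref{thm:traces} and the equivalence $\|(-\e\Delta_N)^{1/4}\phi\|_{L^2}^2\sim\e^{1/2}[\phi]_{H^{1/2}(\Omega)}^2$ from Theorem~\ref{thm:trace and H Omega}, and finally the fractional Poincar\'e inequality in $H^{1/2}(\Omega)$; you instead stay on the spectral side and use the elementary bound $\sum_{k\ge1}(\e\lambda_k)^{1/2}|\langle u,\varphi_k\rangle|^2\ge(\e\lambda_1)^{1/2}\|u-\bar u\|_{L^2(\Omega)}^2$, which is a bit more direct and yields the explicit threshold $\e^*=\bigl((pC^{p-1}-1)_+/\lambda_1^{1/2}\bigr)^2$. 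Likewise your pointwise mean-value estimate $(u^p-\bar u^p)(u-\bar u)\le pC^{p-1}(u-\bar u)^2$ replaces the paper's integral Taylor remainder $\int_0^1p(u_\Omega+t\phi)^{p-1}\,dt$; both produce the same bound. Everything checks out.
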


\begin{proof}
Let $u$ be a positive solution to \eqref{eq.8} and write $u=\phi+u_{\Omega}$, where $u_\Omega$
is as in \eqref{eq:mean}, so that
\begin{equation}\label{eq:tres estrellases}
\int_\Omega\phi\,dx=0.
\end{equation}
Then $\phi$ satisfies the equation (recall that $g(u)=u^p$ when $u>0$)
\[
(-\e\Delta_{N})^{1/2}\phi+\phi-\left(\int_{0}^{1}p(u_{\Omega}+t\phi)^{p-1}\,dt\right)\phi=u_{\Omega}^p-u_{\Omega}.
\]
Let $v^{\phi}=E^{\e}(\phi)$ be the $\e$--Neumann extension of $\phi$, which satisfies the extension problem
\begin{equation}\label{eq.phi}
\begin{cases}
\e\Delta_xv^{\phi}+v^{\phi}_{yy}=0,&\hbox{in}~\mathcal{C},\\
\partial_{\nu} v^{\phi}=0,&\hbox{on}~\partial_{L}\mathcal{C},\\
\displaystyle-\lim_{y\rightarrow0}v^{\phi}_y(\cdot,y)=\left(\int_{0}^{1}p(u_{\Omega}+t\phi)^{p-1}\,dt\right)\phi-\phi+
u_{\Omega}^p-u_{\Omega},&\hbox{on}~\Omega.
\end{cases}
\end{equation}
Taking $v^{\phi}$ as a test function in \eqref{eq.phi} and using \eqref{eq:tres estrellases}
 we find the identity
\[
\e\iint_{\mathcal{C}}|\nabla_{x} v^{\phi}|^{2}\,dx\,dy+\iint_{\mathcal{C}} |v^{\phi}_{y}|^{2}\,dx\,dy+\int_{\Omega}\phi^{2}\,dx
=\int_{\Omega}\left(\int_{0}^{1}p(u_{\Omega}+t\phi)^{p-1}\,dt\right)\phi^{2}\,dx.
\]
Since by Theorem \ref{Unifboundtheo} we have
\[
\sup_{\Omega} u\leq C
\]
where $C$ is a constant not depending on $\e$, we find
\[
\e\iint_{\mathcal{C}}|\nabla_{x} v^{\phi}|^{2}\,dx\,dy+\iint_{\mathcal{C}} |v^{\phi}_{y}|^{2}\,dx\,dy+\int_{\Omega}\phi^{2}\,dx
\leq pC^{p-1}\int_{\Omega}\phi^{2}\,dx.
\]
Thus inequality \eqref{eq.20} yields
\[
\|(-\e\Delta_{N})^{1/4} \phi\|^{2}_{L^{2}(\Omega)}+\int_{\Omega}\phi^{2}\,dx\leq p C^{p-1}\int_{\Omega}\phi^{2}\,dx,
\]
which in turn implies, by Theorem \ref{thm:trace and H Omega}, that for some constant $C_{1}>0$,
\begin{equation}
\e^{1/2}C_{1}[\phi]^{2}_{H^{1/2}(\Omega)}+\int_{\Omega}\phi^{2}\,dx\leq pC^{p-1}\int_{\Omega}\phi^{2}\,dx.\label{eq.26}
\end{equation}
Now we recall the \emph{fractional Poincar\'{e} inequality} (see \cite{Adams})
 which says that there is a constant $C_{2}>0$ such that for all
$\psi\in H^{1/2}(\Omega)$ one has
\[
C_{2}\|\psi-\psi_{\Omega}\|_{L^{2}(\Omega)}\leq[\psi]_{H^{1/2}(\Omega)}.
\]
Then applying such inequality to $\psi=\phi$, by recalling \eqref{eq:tres estrellases} and inserting it into \eqref{eq.26} we finally find
\[
(C_{1}C_{2}\e^{1/2}+1)\int_{\Omega}\phi^{2}\,dx\leq p\, C^{p-1}\int_{\Omega}\phi^{2}\,dx,
\]
which is impossible if $\e>\left[(p\, C^{p-1}-1)/C_1C_{2})_{+}\right]^{2}=:\e^{\ast}$ and $\phi\not\equiv 0$. Then for $\e>\e^{\ast}$ we must have $\phi\equiv0$, namely $u=u_\Omega$ and \eqref{eq.8}
implies $u\equiv1$.
\end{proof}
\noindent\textbf{Acknowledgements.}
This research was motivated from discussions between the second author and Christian Kuehn.
We thank Laurent Saloff-Coste and Jiaping Wang for
very interesting discussions regarding the Neumann heat kernel,
 {and to Luis Caffarelli for pleasant conversations about this work.
We are also grateful to Benedetta Pellacci for pointing out a computational mistake
in an earlier version of this paper, as well as to the referee for very useful detailed remarks that helped us
to improve the presentation of the results.}
The first author is grateful to the
Dipartimento di Ingegneria at Universit\`a degli Studi di Napoli ``Parthenope'' for their kind hospitality during
several visits. The authors have been partially supported by MTM2011-28149-C02-01 from Spanish Government and the Gruppo
Nazionale per l'Analisi Matematica, la Probabilit\`{a} e le loro
Applicazioni (GNAMPA) of the Istituto Nazionale di Alta Matematica
(INdAM), Italy.

\bibliographystyle{siam}\small
\bibliography{FractNeumbibJune}

\begin{thebibliography}{10}

\bibitem{Adams}
{\sc R.~A. Adams}, {\em Sobolev Spaces}, Academic Press, New York-London, 1975.
\newblock Pure and Applied Mathematics, Vol. 65.

\bibitem{Alves-Oliva}
{\sc M.~O. Alves and S.~M. Oliva}, {\em An extension problem related to the
  square root of the laplacian with neumann boundary condition}, Electron. J.
  Differential Equations, 2014 (2014), p.~18.

\bibitem{AMBRRAB}
{\sc A.~Ambrosetti and P.~H. Rabinowitz}, {\em Dual variational methods in
  critical point theory and applications}, J. Functional Analysis, 14 (1973),
  pp.~349--381.

\bibitem{Cabre-Sola}
{\sc X.~Cabr{\'e} and J.~Sol{\`a}-Morales}, {\em Layer solutions in a
  half-space for boundary reactions}, Comm. Pure Appl. Math., 58 (2005),
  pp.~1678--1732.

\bibitem{Cabre-Tan}
{\sc X.~Cabr\'e and J.~Tan}, {\em Positive solutions of nonlinear problems
  involving the square root of the laplacian}, Adv. Math., 224 (2010),
  pp.~2052--2093.

\bibitem{Caffree}
{\sc L.~Caffarelli}, {\em Free boundary problems for fractional powers of the
  {L}aplacian}, in A great mathematician of the nineteenth century. {P}apers in
  honor of {E}ugenio {B}eltrami (1835--1900) ({I}talian), vol.~39 of Ist.
  Lombardo Accad. Sci. Lett. Incontr. Studio, LED--Ed. Univ. Lett. Econ.
  Diritto, Milan, 2007, pp.~273--286.

\bibitem{Caffarelli-Silvestre}
{\sc L.~Caffarelli and L.~Silvestre}, {\em An extension problem related to the
  fractional {L}aplacian}, Comm. Partial Differential Equations, 32 (2007),
  pp.~1245--1260.

\bibitem{Caffarelli-Salsa-Silvestre}
{\sc L.~A. Caffarelli, S.~Salsa, and L.~Silvestre}, {\em Regularity estimates
  for the solution and the free boundary of the obstacle problem for the
  fractional {L}aplacian}, Invent. Math., 171 (2008), pp.~425--461.

\bibitem{Caffgeo}
{\sc L.~A. Caffarelli and A.~Vasseur}, {\em Drift diffusion equations with
  fractional diffusion and the quasi-geostrophic equation}, Ann. of Math. (2),
  171 (2010), pp.~1903--1930.

\bibitem{Campanato}
{\sc S.~Campanato}, {\em Propriet\`a di h\"olderianit\`a di alcune classi di
  funzioni}, Ann. Scuola Norm. Sup. Pisa (3), 17 (1963), pp.~175--188.

\bibitem{Escudero}
{\sc C.~Escudero}, {\em The fractional keller--segel model}, Nonlinearity, 19
  (2006), pp.~2909--2918.

\bibitem{Evans}
{\sc L.~C. Evans}, {\em Partial Differential Equations}, vol.~19 of Graduate
  Studies in Mathematics, American Mathematical Society, Providence, RI,
  second~ed., 2010.

\bibitem{Gale-Miana-Stinga}
{\sc J.~E. Gal\'e, P.~J. Miana, and P.~R. Stinga}, {\em Extension problem and
  fractional operators: semigroups and wave equations}, J. Evol. Equ., 13
  (2013), pp.~343--368.

\bibitem{Gidas-Spruck}
{\sc B.~Gidas and J.~Spruck}, {\em A priori bounds for positive solutions of
  nonlinear elliptic equations}, Comm. Partial Differential Equations, 6
  (1981), pp.~883--901.

\bibitem{Gilbarg-Trudinger}
{\sc D.~Gilbarg and N.~S. Trudinger}, {\em Elliptic Partial Differential
  Equations of Second Order}, Classics in Mathematics, Springer-Verlag, Berlin,
  2001.
\newblock Reprint of the 1998 edition.

\bibitem{Gyrya-Saloff-Coste}
{\sc P.~Gyrya and L.~Saloff-Coste}, {\em Neumann and dirichlet heat kernels in
  inner uniform domains}, Ast\'erisque, 336 (2011), pp.~viii+144.

\bibitem{Harboure}
{\sc E.~Harboure}, {\em Spaces of smooth functions}, in Advanced courses of
  mathematical analysis III, Hackensack, NJ, 2008, World Sci. Publ.,
  pp.~67--85.

\bibitem{Hu}
{\sc B.~Hu}, {\em Nonexistence of a positive solution of the laplace equation
  with a nonlinear boundary condition}, Differential Integral Equations, 7
  (1994), pp.~301--313.

\bibitem{Imbert-Mellet}
{\sc C.~Imbert and A.~Mellet}, {\em Existence of solutions for a higher order
  non-local equation appearing in crack dynamics}, Nonlinearity, 24 (2011),
  pp.~3487--3514.

\bibitem{YanYanLi}
{\sc Y.~Li and L.~Zhang}, {\em Liouville-type theorems and {H}arnack-type
  inequalities for semilinear elliptic equations}, J. Anal. Math., 90 (2003),
  pp.~27--87.

\bibitem{Lin-Ni-Takagi}
{\sc C.-S. Lin, W.-M. Ni, and I.~Takagi}, {\em Large amplitude stationary
  solutions to a chemotaxis system}, J. Differential Equations, 72 (1988),
  pp.~1--27.

\bibitem{PellacciMontef}
{\sc E.~Montefusco, B.~Pellacci, and G.~Verzini}, {\em Fractional diffusion
  with {N}eumann boundary conditions: the logistic equation}, Discrete Contin.
  Dyn. Syst. Ser. B, 18 (2013), pp.~2175--2202.

\bibitem{MR849484}
{\sc W.-M. Ni and I.~Takagi}, {\em On the {N}eumann problem for some semilinear
  elliptic equations and systems of activator-inhibitor type}, Trans. Amer.
  Math. Soc., 297 (1986), pp.~351--368.

\bibitem{NiTakagi2}
\leavevmode\vrule height 2pt depth -1.6pt width 23pt, {\em On the shape of
  least-energy solutions to a semilinear {N}eumann problem}, Comm. Pure Appl.
  Math., 44 (1991), pp.~819--851.

\bibitem{NiTakagi3}
\leavevmode\vrule height 2pt depth -1.6pt width 23pt, {\em Locating the peaks
  of least-energy solutions to a semilinear {N}eumann problem}, Duke Math. J.,
  70 (1993), pp.~247--281.

\bibitem{Saloff-Coste}
{\sc L.~Saloff-Coste}, {\em The heat kernel and its estimates}, vol.~57 of Adv.
  Stud. Pure Math., Math. Soc. Japan, 2010, pp.~405--436.

\bibitem{Silvestre-thesis}
{\sc L.~Silvestre}, {\em Regularity of the obstacle problem for a fractional
  power of the Laplace operator}, PhD thesis, The University of Texas at
  Austin, USA, 2005.

\bibitem{Silv1}
\leavevmode\vrule height 2pt depth -1.6pt width 23pt, {\em H\"older estimates
  for solutions of integro-differential equations like the fractional
  {L}aplace}, Indiana Univ. Math. J., 55 (2006), pp.~1155--1174.

\bibitem{Silv2}
\leavevmode\vrule height 2pt depth -1.6pt width 23pt, {\em Regularity of the
  obstacle problem for a fractional power of the {L}aplace operator}, Comm.
  Pure Appl. Math., 60 (2007), pp.~67--112.

\bibitem{Stein-Singular}
{\sc E.~M. Stein}, {\em Singular Integrals and Differentiability Properties of
  Functions}, Princeton Univ. Press, Princeton, NJ, 1970.

\bibitem{Stinga}
{\sc P.~R. Stinga}, {\em Fractional powers of second order partial differential
  operators: extension problem and regularity theory}, PhD thesis, Universidad
  Aut\'onoma de Madrid, Spain, 2010.

\bibitem{Stinga-Torrea}
{\sc P.~R. Stinga and J.~L. Torrea}, {\em Extension problem and {H}arnack's
  inequality for some fractional operators}, Comm. Partial Differential
  Equations, 35 (2010), pp.~2092--2122.

\bibitem{Wang-Yan-Preprint}
{\sc F.-Y. Wang and L.~Yan}, {\em Gradient estimate on the neumann semigroup
  and applications}, arXiv:1009.1965v2,  (2010), p.~12.

\bibitem{Wang-Yan-Proceedings}
\leavevmode\vrule height 2pt depth -1.6pt width 23pt, {\em Gradient estimate on
  convex domains and applications}, Proc. Amer. Math. Soc., 141 (2012),
  pp.~1067--1081.

\bibitem{Wang}
{\sc J.~Wang}, {\em Global heat kernel estimates}, Pacific J. Math., 178
  (1997), pp.~377--398.

\end{thebibliography}
\end{document}